\newtheorem{mthm}{Theorem}
\theoremstyle{plain}
\newtheorem{thm}{Theorem}[section]
\newtheorem{cor}[thm]{Corollary}
\newtheorem{lem}[thm]{Lemma}
\newtheorem{clm}[thm]{Claim}
\newtheorem{prop}[thm]{Proposition}
\newtheorem{problem}[thm]{Problem}
\theoremstyle{definition}
\newtheorem{dfn}[thm]{Definition}
\theoremstyle{remark}
\newtheorem{remark}[thm]{Remark}
\newtheorem*{pf}{Proof}
\numberwithin{equation}{section}
\numberwithin{figure}{section}
\newcommand{\Li}{\mathcal{L}}
\newcommand{\la}{\Lambda}
\newcommand{\A}{A}
\newcommand{\D}{D}
\newcommand{\ubar}{\underline}
\def\smfd{W^{\mathrm{s}}}
\def\umfd{W^{\mathrm{u}}}
\def\lsmfd{W^{\mathrm{s}}_{\mathrm{loc}}}
\def\lumfd{W^{\mathrm{u}}_{\mathrm{loc}}}
\def\Diff{\mathrm{Diff}}
\title[Takens' Last Problem and strong pluripotency]
{
Takens' Last Problem and strong pluripotency
}
\author{Shin Kiriki}
\address[Shin Kiriki]{Department of Mathematics, Tokai 
University, 4-1-1 Kitakaname, Hiratuka, Kanagawa, 259-1292, JAPAN}
\email{kiriki@tokai.ac.jp}
\author{Xiaolong Li$^{\ast}$}\thanks{$\ast$\ Author to whom any correspondence should be addressed}
\address[Xiaolong Li]{School of Mathematics and Statistics, 
Huazhong University of Science and Technology, Luoyu Road 1037, Wuhan, 430074, CHINA}
\email{lixl@hust.edu.cn}
\author{Yushi Nakano}
\address[Yushi Nakano]{
Department of Mathematics, Hokkaido University, Kita 10, Nishi 8, Kita-Ku, Sapporo, Hokkaido, 060-0810, 
JAPAN}
\email{yushi.nakano@math.sci.hokudai.ac.jp}
\author{Teruhiko Soma}
\address[Teruhiko Soma]{Department of Mathematical Sciences, 
Tokyo Metropolitan University, 1-1 Minami-Ohsawa, Hachioji, Tokyo, 192-0397, JAPAN}
\email{tsoma@tmu.ac.jp}
\author{Edson Vargas 
}
\address[Edson Vargas]{Departamento de Matematica, 
IME-USP, S\~ao Paulo, BRAZIL}
	\email{vargas@ime.usp.br}
\subjclass[2020]{Primary 37C60, 37C20, 37C29, 37C70; Secondary 37C25, 37H99}
\keywords{homoclinic tangency, wandering domain,  pluripotency,  robustness, Takens' Last Problem}
\begin{document}

\begin{abstract}
We consider the concept of strong pluripotency of dynamical systems for a hyperbolic invariant set, 
as introduced in \cite{KNS}. 
To the best of our knowledge, 
for the whole hyperbolic invariant set,
the existence of robust strongly pluripotent dynamical systems has not been proven in previous studies. 
In fact, there is an example of strongly pluripotent dynamical systems in \cite{CV01},
but its robustness has not been proven. On the other hand, 
robust strongly pluripotent dynamical systems for some proper subsets of hyperbolic sets 
had been found in \cite{KS17, KNS}.
In this paper, we provide a combinatorial way to recognize strongly pluripotent diffeomorphisms in a Newhouse domain and prove that they are $C^r$-robust, $2\leq r< \infty$.
More precisely, we prove that there is a two-dimensional diffeomorphism with a wild Smale horseshoe which has a  
 $C^r$ neighborhood $\mathcal{U}_0$ where all elements are strongly pluripotent for the whole Smale horseshoe.
Moreover, it follows from the result that any property, 
such as 
having a non-trivial physical measure supported by the Smale horseshoe 
or
having historic behavior,
is $C^r$-persistent relative to a dense subset of $\mathcal{U}_0$.
\end{abstract}

\maketitle

\section{Introduction}

In this paper, we consider open subsets of the space $\Diff^r(M)$ of $C^r$ diffeomorphisms endowed with the 
$C^r$ topology,  where $M$ is a compact Riemannian surface without boundary.
For a large subset of $\Diff^r(M)$, those satisfying the Axiom A for example, 
the topological and statistical behavior of almost every (in the Lebesgue sense) forward orbits 
agree, that is, they are governed by well-understood measures supported on the topological  attractors, see \cite{Sinai72,Bowen, R03}.

On the other hand, following Ruelle \cite{R02}, among the possible 
statistical behaviors, there are 
points $x \in M$ or its forward orbits which have {\em historic behavior}, that is, 
points $x$ such that the sequence of 
empirical  measures 
\begin{equation}\label{ep}
\delta^{n}_{x,f}=
\frac{1}{n}\sum_{i=0}^{n-1}\delta_{f^i(x)},
\end{equation}
 where $\delta_{f^i(x)}$ is the Dirac measure at $f^i(x),$	
does not converge in the weak$^{\ast}$ topology, when $n$ goes to 
infinity. 
The set of points with historic behavior of an Axiom A diffeomorphism has zero Lebesgue measure
but, in some cases, it is a residual set.
For example, it follows from \cite{T08} that this is the case of the well-known solenoid on the 
three-dimensional solid torus. 
Nevertheless, 
it is natural to ask 
the existence and abundance of diffeomorphisms whose set of initial points with historic behavior has positive Lebesgue measure.
Indeed, Takens' Last Problem \cite{T08} is \emph{whether there is a persistent class of dynamical systems
such that the set of initial points with historic behavior has positive Lebesgue measure}.
Here, we recall the concepts of persistence and robustness to avoid confusion. 
Let $\mathcal{C}$ be a non-empty subset of
$\mathrm{Diff}^r(M)$, which is called a \emph{class}. 
We say that a property  $\mathscr{A}$ is $C^r$-\emph{persistent} relative to $\mathcal{C}$ if every $f\in \mathcal{C}$ has the property $\mathscr{A}$. See \cite[Section 11]{PR83} and \cite[Section 3]{T08}.
 Such a property is called $C^r$-\emph{robust},  particularly when $\mathcal{C}$ is an open set.

As an answer to Takens' Last Problem in dimension two,
Kiriki and Soma  \cite{KS17} proved that the property of having a wandering domain with historic behavior is 
$C^r$-persistent relative to a dense subset of every Newhouse domain. 
This was extended in several directions \cite{LR17, B22, BB23}.
Here we go further and prove in Theorem \ref{mainthm} that there exist 
a diffeomorphism $F_{0}\in \Diff^{r}(M)$ in  a Newhouse domain and its 
$C^{r}$ neighborhood $\mathcal{U}_0$ all elements of which are strongly pluripotent for a Smale horseshoe.
The concept of strong pluripotency, 
Definition~\ref{dfn1}, was borrowed 
from  \cite{KNS} where it appeared for the first time.
Roughly speaking,
it implies that 
any orbit starting from Smale horseshoe for a diffeomorphism in $\mathcal{U}_{0}$, 
whose statistical behavior is arbitrarily prescribed in a combinatorial manner, 
can be realized by some nearby diffeomorphism and  a set 
of points with positive Lebesgue measure.



As another result in Theorem \ref{HD},  we distinguish two dense classes with completely different statistical properties of $\mathcal{U}_0$.
One is the class of diffeomorphisms
which have a non-trivial physical measure supported on some saddle orbit in a Smale horseshoe.
The other is the class of diffeomorphisms $g$ which have 
a wandering domain $D$ 
such that, for every $x\in D$, 
the set
of weak$^{\ast}$ accumulation of the empirical measures
$\delta^n_{x, g}$ contains at least two different measures 
supported on the Smale horseshoe.

\section{Basic concepts and main results}
Throughout this paper, let $r$ be a fixed integer with $2\leq r<\infty$ except the arguments on $C^{1+\alpha}$ topology in Subsection \ref{ssH}, 
%
%
$M$ a  compact $C^r$ Riemannian surface  
without boundary and $\Diff^r(M)$
the set of all $C^r$ diffeomorphisms of $M$ endowed with 
the $C^r$ topology. 

\subsection{Wandering domain and pluripotency}
Let us recall several topological concepts of dynamical systems. 
We say that $\A \subset M$ is an {\em attractor} for
$f \in \Diff^r(M),$ if $\A$ is a compact $f$-invariant  transitive
set  and its basin of attraction
$$
B_f(\A) = \{x \in M : f^n (x) \to  \A  \textrm{ as }   n \to  +\infty \}
$$
contains a neighborhood of $\A$. Moreover
we say that  $\A$ is a \emph{weak attractor}  for $f$
if it satisfies the following conditions.
\begin{itemize}
\item $\A$ is a non-wandering and 
dynamically connected invariant set
(i.e.\ it is not the union of two non-trivial closed disjoint invariant sets),
\item $B_f(\A)$ contains an open set $C$ which has only finitely many 
connected components and such that the closure $c\ell(C)$ contains $\A$.
\end{itemize}

\begin{dfn}\label{dfnWD}
A non-empty connected open set $D\subset M$ is called a \emph{wandering domain} for $f$ if $f^n(D)$ ($n=0,1,\dots$) are pairwise disjoint. Furthermore, 
\begin{enumerate}[(i)]
\item A wandering domain $\D$
is called {\em contracting} if the diameter
of $f^n(\D) $ goes down to zero as $n$ goes to $+\infty$.
\item\label{dfnNWD} A wandering domain $\D$ is called \emph{non-trivial} if $\D$ is not contained in the basin of attraction of a weak attractor.
\end{enumerate}  
\end{dfn}
Note that the definition \eqref{dfnNWD} for non-triviality of the wandering domain in Definition \ref{dfnWD} is stronger than the condition of \cite{dMvS}. In fact, for this stronger condition, 
 a wandering domain of Denjoy's example on $S^{1}$ is no longer non-trivial 
 since the basin of a weak attractor is the whole  $S^{1}$.
But the stronger condition is more effective than that of \cite{dMvS}
 in eliminating several trivial examples of higher dimensions, 
e.g. Bowen eye or contracting saddle-node, see \cite{CV01}.

Next, 
we recall the \emph{first Wasserstein metric} $d_W$ for any Borel probability measures $\mu$ and $\nu$ on $M$ 
defined as
\[
d_W(\mu ,\nu )= \sup_{\varphi \in \Li} \left\vert \int _M \varphi \, 
d\mu - \int _M \varphi \, d\nu \right\vert,\]
where $\Li$ is the set of Lipschitz  functions $\varphi : M \to [-1, 1]$ with 
Lipschitz constants bounded by $1$. 
We now formulate the concepts of pluripotency and strong 
pluripotency which appeared for the first time in \cite{KNS}.

\begin{dfn}[pluripotency]\label{dfn1}
Let $ \la_f $ be a uniformly hyperbolic compact invariant set (for simplicity, hyperbolic set) 
for $f \in \Diff^{r}(M).$ 
\begin{itemize}
\item $f$  is said to be \emph{pluripotent} for $\la_f$ if, for any $x\in \la_f,$ 
there exists $g \in \Diff^r(M),$ arbitrarily $C^r$-close to $f,$ which
has a set of positive Lebesgue measure $\D_g$  such that, 
for any $y\in \D_g$, 
\begin{equation}\label{defpl}
\lim _{n\to \infty} d_W( \delta_{y,g}^n, \delta_{x_g,g}^n ) =0,
\end{equation}
where 
$\delta_{y,g}^n$ and $\delta_{x_g,g}^n$ are empirical measures given as
\eqref{ep} and $x_g\in \la_g$ is the continuation of $x\in \la_f$. 
\item
$f$  is said to be \emph{strongly pluripotent} for $\la_f$,   if the above 
\eqref{defpl} 
is replaced by 
\begin{equation}\label{defspl}
\lim _{n\to \infty} 
\frac{1}{n}\sum_{i=0}^{n-1}
\sup_{y\in \D_{g}}\mathrm{dist}(g^i(y),g^i(x_{g}))=0.
\end{equation}
\end{itemize}
\end{dfn}

\begin{remark}
We note something important about the above definition:
\begin{enumerate}
\item 
In the results of this paper, 
the set corresponding to $\D_{g}$
in Definition \ref{dfn1} is provided as a non-trivial wandering domain.
\item
 It can be shown that \eqref{defspl} implies \eqref{defpl} while the converse is not true in general, see \cite{KNS}.
 \item 
We can generalize Definition \ref{dfn1} to 
a subset of the hyperbolic invariant set rather than the entire hyperbolic invariant set.
Compare Definition \ref{dfn1} with its generalized version in \cite{KNS}.

\end{enumerate}
\end{remark}

Pluripotency is a term widely used in physiology and related fields to refer to the ability of a system to move from an undifferentiated state to various states determined by its internal dynamics.
In fact, Yamanaka was awarded the Nobel Prize for developing a technique to reprogram somatic cells, introducing pluripotent stem cells (iPSCs) by a small change of genes \cite{Yam12}.
The above definition is an abstraction of the concept of pluripotency from a dynamical systems perspective.

\subsection{Robust strongly pluripotency}
In this paper, we prove the existence of an open set of strongly pluripotent diffeomorphisms 
$f$
 for a \emph{wild Smale horseshoe}: 
a uniformly hyperbolic invariant set $\Lambda$ 
such that the restriction $f\vert_{\Lambda}$
is topologically conjugated to the shift map on the full two-sided two-symbol space 
and besides which has a homoclinic tangency. 
Note that 
the property of having a homoclinic tangency is 
$C^{r}$-persistent relative to an open set, namely $C^{r}$-robust, in $\Diff^{r}(M)$. 
Such an open set is called a \emph{Newhouse domain}.
Newhouse domains are shown to be non-empty by 
Newhouse for two-dimensional  $C^{2}$ diffeomorphisms \cite{N70, N74, N79},
by Crovisier et al.\ for two-dimensional $C^{1+\alpha}$ diffeomorphisms \cite{C13} (see Subsection \ref{ssH} below), 
and 
by  Bonatti--D\'{\i}az  for three or higher dimensional $C^{1}$ diffeomorphisms \cite{BD96}.
See \cite{PT93, BDV05} for a comprehensive explanation.

The starting point for pluripotency is based on ideas used in  \cite{CV01} to prove the existence of wandering domains with different ergodic properties. 
These ideas were also adopted in \cite{KS17} to give an affirmative solution to Takens' Last Problem in two-dimensional diffeomorphisms.
In \cite{KNS23}, 
similar ideas were extended to three-dimensional diffeomorphisms with a wild blender-horseshoe 
and we studied statistical dynamics of the contracting wandering domain.
In \cite{KNS}, the pluripotency and strong pluripotency are formulated for the first time 
and these properties 
are shown 
to be $C^{2}$-robust for a diffeomorphism with a wild blender-horseshoe. 
Note, however, that the pluripotency studied in \cite{KNS} is limited to some proper subsets of blender-horseshoe. 
In other words, 
\emph{it remains unknown whether there exists an open set of strongly pluripotent diffeomorphisms 
for the whole part of a basic set such as a horseshoe even in two-dimension}.
To this problem, we give the next result for two-dimensional diffeomorphisms.
 
\begin{mthm}\label{mainthm} 
There are an element $F_{0}$ of $\Diff^r(M)$ having a wild Smale horseshoe $\la_{F_{0}}$ and a 
 $C^{r}$ neighborhood  $\mathcal{U}_0$ of $F_{0}$ 
such that every diffeomorphism $f \in \mathcal{U}_0$  is  strongly
pluripotent for the continuation $\la_{f}$ of $\la_{F_{0}}$. 
\end{mthm}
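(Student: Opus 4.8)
The plan is to construct $F_0$ explicitly as a perturbation of an affine horseshoe map and to arrange, once and for all, a homoclinic tangency associated with a fixed saddle of the horseshoe whose unfolding is \emph{robust} in the sense of a Newhouse phenomenon. Concretely, I would start from a linear horseshoe map on a square, with a dissipative saddle fixed point $p$, and bend the dynamics near a corner so that the stable and unstable manifolds of $\Lambda_{F_0}$ meet in a quadratic tangency. Choosing the contraction/expansion rates so that the thickness estimates of Newhouse are satisfied, one gets a $C^r$ neighborhood $\mathcal{U}_0$ of $F_0$ in which the tangency (or a nearby one) persists — this is exactly the statement that $\mathcal{U}_0$ lies in a Newhouse domain. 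The wildness of the horseshoe for every $f\in\mathcal{U}_0$ is then immediate.

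The heart of the argument is the \emph{combinatorial realization scheme}. Given $f\in\mathcal{U}_0$ and a target point $x_f\in\Lambda_f$, coded by a bi-infinite sequence $\omega\in\{0,1\}^{\mathbb{Z}}$, I would build the wandering domain $\mathcal{D}_g$ by an inductive, renormalization-type construction near the tangency: at step $n$ one has a small disk (an approximate stable disk of the horseshoe) which, after a controlled number $N_n$ of iterates, is mapped back near the tangency region, and one uses an arbitrarily small $C^r$ perturbation supported away from $\Lambda_f$ (so that $\Lambda_f$ and its continuation are untouched) to steer the image so that the next block of its itinerary matches the next block of $\omega$. The key quantitative point is that one can make the returned disk \emph{shrink geometrically}, so that $\mathrm{diam}\,g^i(\mathcal{D}_g)\to 0$ and, more importantly, the return times $N_n$ grow slowly enough (say sub-exponentially, or just $N_n = o$ of the total elapsed time) that the Cesàro average in \eqref{defspl} of $\sup_{y\in\mathcal{D}_g}\mathrm{dist}(g^i(y),g^i(x_g))$ tends to $0$: during the long stretches where the disk travels alongside the orbit of $x_g$ inside the horseshoe it is exponentially close to that orbit, and the bad stretches near the tangency have negligible density. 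Finally one checks that $\mathcal{D}_g$ is genuinely a non-trivial wandering domain — disjointness of forward iterates follows from the construction, and non-triviality (condition \eqref{dfnWD2}) follows because $\overline{\bigcup_n g^n(\mathcal{D}_g)}$ accumulates on the whole horseshoe, which is not contained in the basin of any weak attractor.

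The step I expect to be the main obstacle is \emph{uniformity of the construction over the whole neighborhood $\mathcal{U}_0$ and over every point of the horseshoe simultaneously}. For a single $f$ and a single $x_f$ one is essentially redoing the Kiriki–Soma argument; the new content is that the size of the perturbations needed at each inductive step, the return times, and the geometric contraction rates must be bounded uniformly for all $f$ in a fixed open set and all codings $\omega$. This forces one to set up the tangency and the "rescaling chart" near it with estimates that are open conditions, and to verify that the distortion control (using $r\ge 2$ to get bounded distortion of the high iterates of the horseshoe on thin stable strips) holds uniformly on $\mathcal{U}_0$. A secondary technical point is ensuring the perturbations at different steps have disjoint supports and sum to a $C^r$-small perturbation; this is handled by placing the supports along the (pre)orbit of the tangency and summing a rapidly decreasing series of bump functions. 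Once these uniform estimates are in place, strong pluripotency for $\Lambda_f$ for every $f\in\mathcal{U}_0$ follows, and the consequences advertised in the abstract (physical measures, historic behavior along $\mathcal{D}_g$) come out by choosing $\omega$ periodic or non-convergent respectively.
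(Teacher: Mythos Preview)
Your proposal is correct and follows essentially the same approach as the paper: construct $F_0$ from an affine horseshoe with a quadratic tangency satisfying the Newhouse thickness condition, then for each $f\in\mathcal{U}_0$ and target $x\in\Lambda_f$ build a wandering rectangle whose itinerary contains long free blocks matching the coding of $x$, with the transit segments near the tangency having density zero so that \eqref{defspl} holds. The paper's concrete mechanism for the step you flag as the main obstacle is a Linking Lemma/Linear Growth Lemma pair which guarantees that the generations of the constrained bridge pairs grow only \emph{linearly} in the step index $k$, while the free middle blocks are taken of length $k^2$; this, rather than uniformity over $\mathcal{U}_0$ per se, is the quantitative heart of the argument, and your phrase ``$N_n=o$ of the total elapsed time'' is exactly the conclusion that this machinery delivers.
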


The positive Lebesgue measure set in the proof of Theorem \ref{mainthm} corresponding to $D_{g}$ in Definition \ref{dfn1} is given as 
a non-trivial wandering domain for some $g$ arbitrarily $C^{r}$ close to $f$.
See the subsequent sections for details.
Note that, by the result in \cite{KS17}, we might obtain a similar conclusion in some open sets in $\mathcal{U}_0$ close to $F_{0}$, 
but there is no guarantee in such a way that the conclusion is correct in the whole   $\mathcal{U}_0$.

Let $\mathcal{P}_f(\Lambda_f)$ be the space of all  $f$-invariant probability measures supported on $\Lambda_f$, equipped with the first Wasserstein metric. The limit set of $(\delta_{x,f}^n)_{n\ge 0}$ is denoted by $\omega((\delta_{x,f}^n)_{n\ge 0})$. 
The following corollary is obtained from Theorem A together with \cite[Theorem 4]{S74}. See also \cite[Subsection 1.2]{KNS22} for related topics.
\begin{cor}\label{cor:B}
For any $f \in \mathcal{U}_0$, there exist an element $g \in \mathcal{U}_0$ arbitrarily $C^r$-close to $f$ and a non-wandering domain $D_g$ of $g$ such that, for any $x \in D_g$, we have
\[
\omega((\delta_{x,g}^n)_{n\ge 0}) = \mathcal{P}_g(\Lambda_g).
\]
\end{cor}

\begin{remark}
In \cite[Theorem B]{BB23}, Berger and Biebler proved that, for any element $f$ of the dissipative Newhouse domain $\mathcal{N}^r$ in $\mbox{Diff}^r(M)$, there exist $g\in\mathcal{N}^r$ arbitrarily $C^r$-close to $f$, a non-wandering domain $D_g$ of $g$, a constant $t\in (0,1)$ and $\mu \in \mathcal{P}_g(\Lambda_g)$ such that for any $x \in D_g$, the limit set $\omega((\delta_{x,f}^n)_{n\ge 0})$ contains the proper subset $
\{t\mu + (1 - t)\nu \mid \nu \in \mathcal{P}_g(\Lambda_g)\} $
of $\mathcal{P}_g(\Lambda_g)$. However, since their theorem has not shown the equality of the corollary, we are not convinced that $f$ is strongly pluripotent for $\Lambda_f$.
\end{remark}

\subsection{Persistent properties in $\mathcal{U}_0$}
Next, we state that any strongly pluripotent diffeomorphism in  $\mathcal{U}_0$ can be approximated by two classes with completely different statistical properties defined as follows.

The  first property is the existence of  \emph{non-trivial physical measure} $\mu$ 
satisfying the following conditions:
\begin{itemize}
\item for  $g\in \mathcal{U}_0$, there exists $x\in M$ such that 
$(\delta^{n}_{x,g})_{n\geq 0}$ converges to an invariant Borel measure $\mu$ whose support  $\textrm{supp}(\mu)$ 
is not an attractor,
\item the basin $B_g(\mu)=
\{ x \in M : \delta^n_{x,g} 
\overset{\mathrm{weak}^*}\longrightarrow \mu\ \textrm{as}\ 
n \to \infty \}$ of $\mu$ has positive Lebesgue measure.
\end{itemize}
Moreover, we say 
that a non-trivial physical measure $\mu$ is \emph{Dirac} if the support of $\mu$ is a periodic orbit of saddle type.
We call a point $x$ \emph{Birkhoff regular} for $g$ 
if 
$(\delta^{n}_{x,g})_{n\geq 0}$ converges to an invariant Borel measure.
For a Birkhoff regular point $q\in \Lambda_{F_{0}}$, 
we say that $g\in \mathcal{U}_0$ satisfies the property $\mathscr{D}_{q}$ if $g$ has a non-trivial physical measure whose support is the closure of the orbit of the continuation $q_{g}$ of $q$.


The second one is the existence of \emph{historic behavior}  which already appeared in the previous section: 
\begin{itemize}
\item
for  $g\in \mathcal{U}_0$ there exists $x\in M$ such that 
the sequence $(\delta^{n}_{x,g})_{n\geq 0}$ of empirical measures given as 
\eqref{ep}
does not converge.
\end{itemize}
We say that $g\in \mathcal{U}_0$ satisfies the property  $\mathscr{H}$ if $g$ has a non-trivial wandering domain such that 
the $g$-forward orbit of each point in the domain has historic behavior.


\begin{mthm}\label{HD}
 Suppose that $\mathcal{U}_0$ is the $C^{r}$ neighborhood of $F_{0}$ in Theorem \ref{mainthm}. Then there exist a dense class $\mathcal{H}$ of $\mathcal{U}_0$ and, for any Birkhoff regular  point $q\in \Lambda_{F_{0}}$, another dense class $\mathcal{D}_{q}$ of $\mathcal{U}_0$  satisfying the following conditions:
\begin{enumerate}[\rm (1)]
\item \label{HD2} $\mathscr{D}_{q}$ is $C^{r}$-persistent relative to $\mathcal{D}_{q}$.
\item \label{HD1} $\mathscr{H}$ is $C^{r}$-persistent relative to $\mathcal{H}$.
\end{enumerate}
\end{mthm}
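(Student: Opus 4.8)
The plan is to construct the two dense classes $\mathcal{H}$ and $\mathcal{D}_q$ directly out of the strong pluripotency of the elements of $\mathcal{U}_0$, exploiting the fact that strong pluripotency is witnessed by perturbations producing \emph{non-trivial wandering domains}. First I would fix $f\in\mathcal{U}_0$ and, for a suitable combinatorially prescribed target orbit inside the Smale horseshoe $\la_f$, invoke Theorem \ref{mainthm} to obtain $g$ arbitrarily $C^r$-close to $f$ together with a non-trivial wandering domain $\D_g$ satisfying \eqref{defspl}. The point is that the combinatorics of the prescribed orbit $x_g$ are still free: by choosing the itinerary of $x_g$ in the shift space to be a Birkhoff-generic point for a chosen ergodic measure, or to be a point whose empirical measures oscillate between two distinct invariant measures, the Cesàro-distance estimate \eqref{defspl} transports that statistical behavior verbatim onto every point of the wandering domain. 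For $\mathscr{H}$: take the itinerary of $x_g$ to have historic behavior on the horseshoe (such points exist and are even residual in the shift space); then \eqref{defspl} forces $(\delta^n_{y,g})_n$ to diverge for every $y\in\D_g$, so $g\in\mathscr{H}$, giving density of such $g$ and hence a dense class $\mathcal{H}$. For $\mathscr{D}_q$ with $q\in\Lambda_{F_0}$ Birkhoff regular: take $x_g$ with the same itinerary as $q$, so $\delta^n_{x_g,g}$ converges to the unique invariant measure $\mu$ supported on the closure of the orbit of $q_g$; then \eqref{defspl} and the definition of the Wasserstein metric give $\delta^n_{y,g}\to\mu$ for all $y\in\D_g$, and since $\D_g$ has positive Lebesgue measure, $\mu$ is a non-trivial physical measure supported on $c\ell(\mathrm{orb}(q_g))$, which is not an attractor (it sits inside the wild horseshoe and is accumulated by the wandering domain). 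Thus $g$ satisfies $\mathscr{D}_q$.

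The second, and more delicate, point is to upgrade "there exist $g$ arbitrarily close to $f$ satisfying the property" to "the property is $C^r$-persistent relative to a dense class": I need to choose $\mathcal{H}$ and $\mathcal{D}_q$ so that the property is genuinely inherited by \emph{every} member. The natural choice is to let $\mathcal{H}$ (resp.\ $\mathcal{D}_q$) be the set of all $g\in\mathcal{U}_0$ possessing a non-trivial wandering domain whose forward orbits have historic behavior (resp.\ whose forward orbits are all asymptotic in the sense of \eqref{defspl} to an orbit equidistributing to $\mu$); that is, $\mathcal{H}=\{g\in\mathcal{U}_0: g \text{ has property } \mathscr{H}\}$ and likewise for $\mathcal{D}_q$. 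With this tautological definition persistence relative to the class is automatic, and the only thing to prove is density, which is exactly what the previous paragraph supplies: every $f\in\mathcal{U}_0$ is a $C^r$-limit of elements of $\mathcal{H}$ and of $\mathcal{D}_q$. So the theorem reduces to the density assertions, and those reduce to a careful reading of the construction behind Theorem \ref{mainthm} — specifically, verifying that the perturbation scheme lets one prescribe the itinerary of the target point $x_g$ \emph{independently} of how close $g$ must be to $f$.

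The step I expect to be the main obstacle is precisely this independence: ensuring that, for a given $\varepsilon>0$, a single $\varepsilon$-$C^r$-perturbation $g$ of $f$ simultaneously (i) creates the non-trivial wandering domain, (ii) makes \eqref{defspl} hold with respect to the prescribed $x_g$, and (iii) does so for a target itinerary chosen from the (large) set of historic or $\mu$-generic itineraries — rather than only for some itinerary dictated by the perturbation size. Concretely, one must check that in the renormalization/affine-like-return construction used for Theorem \ref{mainthm}, the admissible symbolic itineraries of $x_g$ form a set that does not shrink as $\varepsilon\to 0$ (it should, roughly, be all itineraries that are "slow enough" relative to the contraction/expansion rates, a condition that historic and generic points can be arranged to satisfy). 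A secondary technical point is checking that $c\ell(\mathrm{orb}(q_g))$ is not an attractor and not of the excluded "weak attractor" type, so that $\mu$ qualifies as a \emph{non-trivial} physical measure and $\D_g$ qualifies as a \emph{non-trivial} wandering domain in the sense of Definition \ref{dfnWD}; this follows because the wandering domain accumulates on the horseshoe and the horseshoe, being wild and non-isolated, cannot lie in the basin of a weak attractor. Once these points are secured, both parts (1) and (2) follow, with part (1) requiring the additional (routine) remark that the uniform convergence \eqref{defspl} survives under the weaker $C^{1+\alpha}$ considerations only if needed, which here it is not since $r\ge 2$.
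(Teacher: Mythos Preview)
Your proposal is correct and follows essentially the same route as the paper: define $\mathcal{H}$ and $\mathcal{D}_q$ tautologically as the sets of $g\in\mathcal{U}_0$ satisfying $\mathscr{H}$ and $\mathscr{D}_q$, so that persistence is automatic and only density must be shown; then obtain density by exploiting the freedom, in the construction behind Theorem~\ref{mainthm}, to prescribe the middle blocks $\widehat{\underline{v}}^{(k)}\in\{0,1\}^{\widehat m_k}$ of the itinerary independently of the size of the perturbation. Your worry about this independence is exactly what the paper secures: the Critical Chain and Rectangle Lemmas allow $\widehat{\underline{v}}^{(k)}$ to be an \emph{arbitrary} word of length $\widehat m_k=k^2$, for every $k$, after the perturbation scale has been fixed.

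The one genuine difference is in how historic behavior is produced. You invoke Theorem~\ref{mainthm} as a black box with a point $x\in\Lambda_{F_0}$ that already has historic behavior, and then transport the divergence of $(\delta^n_{x_g,g})$ to every $y\in D_g$ via \eqref{defspl}$\Rightarrow$\eqref{defpl}; this is clean and fully rigorous. The paper instead reopens the construction and hand-crafts the blocks $\widehat{\underline{v}}^{(k)}$ directly, imposing an ``era condition'' on an auxiliary sequence $(k_s)$ together with an explicit oscillation rule (blocks with roughly $1/3$ zeros in even eras, $2/3$ zeros in odd eras), so that the empirical measures of the wandering domain accumulate on two distinct measures on $\Lambda_g$ without reference to any particular historic $x$. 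Your route is shorter and makes clear that Theorem~\ref{HD} is a formal corollary of strong pluripotency; the paper's route is more self-contained and makes the two accumulation measures completely explicit (Dirac masses at the two fixed points), which is slightly stronger than the bare statement of $\mathscr{H}$. For part~(1) the two arguments coincide: both feed the code of $q$ into the free blocks and read off a non-trivial physical measure supported on $c\ell(\mathrm{orb}(q_g))$.
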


\begin{remark}
The result of Theorem \ref{HD} \eqref{HD1} is an affirmative answer to Takens' Last Problem 
which could not be obtained from that of \cite{KS17}. 
Indeed, though the result of \cite{KS17} provides a locally dense subset of an
open set arbitrarily close to $F_{0}$, 
it does not guarantee that it is dense in a neighborhood of $F_{0}$
\end{remark}

\subsection{The $C^{1+\alpha}$ case}\label{ssH}
Theorems \ref{mainthm} and \ref{HD}  would also hold for every real number $r=1+\alpha$ with $0<\alpha<1$.
It is well-known that  homoclinic tangencies for two-dimensional diffeomorphisms 
exist $C^{r}$-robustly  if $r\geq 2$ \cite{N79} 
but does not if $r=1$ \cite{M11}. 
On the other hand, it was not publicly known whether two-dimensional diffeomorphisms have $C^{r}$-robust homoclinic tangency when $1<r<2$. 
But two years later after \cite{M11}, 
Crovisier and Gourmelon gave a positive answer to the problem
and recently provided it in the lecture note \cite[Remark 1]{C13}.
The most important part of their proof is that it presents a new way to evaluate overlappings of stable and unstable laminations of a horseshoe of a $C^{1+\alpha}$ diffeomorphism in a way different from the conventional method for $r\geq 2$.
One of the ingredients they provided in \cite{C13} is 
the existence of Lipschitz holonomy along the local unstable and stable laminations, see also \cite[Appendix A]{BCS22}.
Using this,  they also provided the following lemma, where the definition of thickness 
is rather technical and will be given in the next section.

\begin{lem}[Continuity of thickness {\cite[\S4 Proposition~2]{C13}}]\label{cv-lem}
Let $\alpha$ be a real number with $0<\alpha<1$ and 
$f$ a $C^{1+\alpha}$ diffeomorphism having a horseshoe $\Lambda$ on a closed surface $M$. 
The stable thickness 
$\tau^{\rm s}(\Lambda_{g})$ of the continuation $\Lambda_{g}$ of $\Lambda$ depends continuously on $g$ 
in a  $C^{1}$ neighborhood of $f$ on the space of $C^{1+\alpha}$ diffeomorphisms 
such that the  $\alpha$-H\"older norm of 
$Dg$, $Dg^{-1}$ is bounded by $C>0$.
\end{lem}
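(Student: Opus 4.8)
The plan is to realise the Cantor set whose width defines $\tau^{\rm s}(\Lambda_g)$ as a dynamically defined Cantor set and to track how its ratio geometry varies with $g$. Recall that the thickness of a Cantor set $K\subset\R$ is the infimum, over the bounded gaps $U$ of $K$ and over the two endpoints of each such $U$, of the ratio of the length of the adjacent bridge of $K$ to the length of $U$; and that $\tau^{\rm s}(\Lambda)$ is the thickness of $K^{\rm s}:=\Sigma\cap \lsmfd(\Lambda)$ for a fixed $C^1$ arc $\Sigma$ transverse to the local stable lamination of $\Lambda$. What controls $\tau^{\rm s}(\Lambda_g)$ is two pieces of data: finitely many ``generation-zero'' length ratios, which vary continuously in $g$ by structural stability of the horseshoe, and a uniform bound on how such ratios distort as one passes to deeper generations of the canonical presentation of $K^{\rm s}_g$. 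For $r\ge 2$ the latter is the classical bounded distortion of the pull-back maps; for $1<r<2$ it is exactly the Lipschitz holonomy along the local laminations provided by \cite{C13} (see also \cite[Appendix A]{BCS22}) that makes such a bound available, and one has to check moreover that the bound is continuous in $g$.

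First I would fix a Markov partition $\{R_1,\dots,R_N\}$ realising the conjugacy of $f|_{\Lambda}$ with the full shift, take $\Sigma$ inside an unstable boundary arc of some $R_i$, and observe that for $g$ in a small $C^1$ neighbourhood of $f$ the continuation $\Lambda_g$, the rectangles $R_j(g)$, the local stable and unstable manifolds, the arc $\Sigma$ and its sublamination of stable leaves all depend continuously on $g$ in the $C^1$ topology. Each admissible word $w=(a_0,\dots,a_{k-1})$ then determines a closed sub-arc $\Sigma_w(g)\subset\Sigma$, namely the points of $\Sigma$ whose stable itinerary begins with $w$, and $K^{\rm s}_g=\bigcap_{k\ge1}\bigcup_{|w|=k}\Sigma_w(g)$. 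The bridges and gaps of $K^{\rm s}_g$ at generation $k$ are the arcs $\Sigma_w(g)$ with $|w|=k$ together with the components of $\Sigma$ between consecutive ones; thus $\tau^{\rm s}(\Lambda_g)$ is the infimum over $k$ of the minimum of the corresponding bridge-to-gap length ratios, and $g\mapsto\bigl(|\Sigma_w(g)|\bigr)_{|w|\le k}$ is continuous for each fixed $k$.

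Next comes the distortion estimate, which is the $C^{1+\alpha}$ substitute for classical bounded distortion. The presentation above has the self-similar structure of a graph-directed iterated function system whose generating maps are restrictions of $g^{\pm1}$ composed with local holonomies along the stable and unstable laminations of $\Lambda_g$. By the Lipschitz-holonomy ingredient of \cite{C13}, these holonomies are uniformly bi-Lipschitz with a constant $L(g)$ which is bounded, and depends continuously on $g$ in the $C^1$ topology, uniformly over the $C^{1+\alpha}$ maps whose $\alpha$-H\"older norms of $Dg$ and $Dg^{-1}$ are bounded by $C$. Consequently any bridge-to-gap ratio of $K^{\rm s}_g$ at generation $k$ differs, by a multiplicative factor lying in $[L(g)^{-2},L(g)^{2}]$, from a bridge-to-gap ratio at a bounded generation $k_0$ fixed once and for all so that every admissible tail already occurs among words of length $k_0$. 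Hence, given $\varepsilon>0$, one finds $k_0=k_0(\varepsilon)$ such that the minimum of bridge-to-gap ratios of generation $\le k_0$ approximates $\tau^{\rm s}(\Lambda_g)$ to within $\varepsilon$, uniformly for $g$ in the neighbourhood.

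Finally I would assemble the argument as in Newhouse's treatment for $r\ge2$ (\cite{N79}, \cite{PT93}): for each fixed $k_0$ the approximation $g\mapsto\min\{\text{generation-}\le k_0\ \text{bridge-to-gap ratios}\}$ is a minimum of finitely many quotients of the continuous functions $|\Sigma_w(g)|$, hence continuous; by the previous step these approximations converge to $\tau^{\rm s}(\Lambda_g)$ uniformly on the neighbourhood as $k_0\to\infty$; and a uniform limit of continuous functions is continuous, so $\tau^{\rm s}(\Lambda_g)$ is continuous in $g$. The main obstacle is precisely the distortion control in the range $1<r<2$: when $r\ge2$ one has genuine bounded distortion of the stable pull-back maps with constant varying continuously (indeed smoothly) in $g$, and the scheme above is the classical one, whereas for $1<r<2$ the derivative cocycle is only $\alpha$-H\"older, the usual distortion estimate fails, and one must instead use the existence and the $C^1$-continuous quantitative control of the Lipschitz holonomy along the local stable and unstable laminations --- the technical heart of \cite{C13}. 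A secondary subtlety is that thickness, being an infimum, is a priori only upper semicontinuous; upgrading this to genuine continuity is exactly what the uniform ``attained, up to $\varepsilon$, at bounded depth'' step achieves, and that step in turn relies on the uniform (in $g$ and in the combinatorics) bound on $L(g)$.
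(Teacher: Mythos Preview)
The paper does not prove this lemma; it is quoted from \cite[\S4 Proposition~2]{C13} and used as a black box, with only a brief description of the method (namely, that the key ingredient is the existence of Lipschitz holonomy along the local stable and unstable laminations in the $C^{1+\alpha}$ setting). Your outline is consistent with that description: you correctly identify that the classical scheme (reduce to finitely many generation-zero ratios by a uniform distortion bound, then use continuity of those finitely many ratios) goes through once one replaces the $C^2$ bounded-distortion estimate by the Lipschitz-holonomy control of \cite{C13}, and you correctly flag that the uniformity of the Lipschitz constant over the $C^{1+\alpha}$ ball of fixed H\"older norm $C$ is exactly what upgrades upper semicontinuity of the infimum to genuine continuity. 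Since the paper offers no proof to compare against, there is nothing further to contrast; your sketch is a faithful account of the argument the paper is citing.
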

\noindent
Note that, since the unstable thickness $\tau^{\rm u}(\Lambda_{g})$ is equal to 
$\tau^{\rm s}(\Lambda_{g^{-1}})$, the same  result holds for the unstable thickness. 
In $C^{r}$ topology with $r\geq 2$, the continuity of thickness is shown in \cite{N79, PT93}.

\begin{remark}[Continuity of denseness]\label{rmk2.6}
A slight modification of the definition of thickness yields the concept of denseness, see Definition \ref{defthick}. 
It is therefore easy to see from the proof of \cite[\S4 Proposition~2]{C13} that 
the claim of Lemma \ref{cv-lem} with thickness replaced by denseness is still true.
The denseness will be used in Section \ref{064}.
\end{remark}

\subsection{Open problem and outline}
For future developments, we compare 
\cite[Theorem A]{KS17} with our Theorem~\ref{mainthm}.
The former theorem states that the set $\mathcal{H}$ is dense
in any Newhouse domain  in $\Diff^r(M)$ 
but it does not discuss the property of strong pluripotency. 
On the  other hand, our
Theorem~\ref{mainthm} guarantees that a Newhouse domain in 
$\Diff^r(M)$
contains the open proper subset $\mathcal{U}_0$ of strongly pluripotent 
diffeomorphisms. 
In particular, it follows that $\mathcal{H}$ 
and $\mathcal{D}_{q}$  are both dense in $\mathcal{U}_{0}$.
Thus, the following problem remains open.

\begin{problem}
Is every diffeomorphism in every Newhouse domain strongly pluripotent?
\end{problem}
As noted at the beginning of this section, 
we have obtained all results in this paper by assuming that 
the regularity $r$ of diffeomorphisms is greater than 1. 
Thus, under the $C^1$ regularity constraint, we propose the following problem.

\begin{problem}
Is there a two-dimensional diffeomorphism which is $C^{1}$-robustly (strongly) pluripotent?
\end{problem}

This paper is organized as follows: in Section~\ref{060}, 
we introduce several definitions including 
a model of a wild Smale horseshoe and its $C^{r}$ neighborhood. 
In Section~\ref{092}, we provide some necessary notions and 
properties about the structures of Cantor sets. Sections~\ref{062} 
to \ref{065} are devoted to developing four lemmas which will be used in the proof of Theorem~\ref{mainthm}. Finally, 
Theorem~\ref{HD} is proved in 
Section~\ref{sec8}.

\section{Robust wild horseshoes}\label{060}
In this section, we set the stage for proving the main theorem by introducing a locally linear horseshoe map (which will be denoted by $F$ in our notation) originally introduced by Colli and Vargas. Building upon \cite{CV01}, we provide a preliminary characterization of dynamical features generated by elements in a $C^r$ neighborhood of $F$. Informally speaking, these systems can all be viewed as deformations of $F$ under perturbations.
\subsection{The Colli--Vargas model $F$}\label{112}
To obtain $F_{0}$ in Theorem \ref{mainthm}, we consider 
the so-called Colli--Vargas model. 
Let $F$ be a $C^{r}$  diffeomorphism having a wild horseshoe 
on a closed surface $M$ 
and identical to the one given in \cite{CV01}. 
More precisely, it is defined as follows.
We may suppose that $M$ has a local chart defined on an open set which is identified 
with an open set containing $(-2, 2)^{2}$ of $\mathbb{R}^{2}$.
In this open set, we consider the rectangle $Q=[-1,1]^{2}$ and 
the disjoint vertical strips of $Q$ defined as 
\[
S_{0}=
\left[-\frac{1}{2}-\sigma^{-1}, -\frac{1}{2}+\sigma^{-1}\right]\times [-1,1],\ \ 
S_{1}=
\left[\ \frac{1}{2}-\sigma^{-1}, \frac{1}{2}+\sigma^{-1}\right]\times [-1,1]
\]
for some constant $\sigma>2$. 
We assume that $F\vert_{ S_{0}\cup S_{1}}$ 
satisfies  
\[
F(x,y)=
\left\{\begin{array}{ll}\displaystyle
\left(\sigma\left(x+\frac{1}{2}\right), -\frac{1}{2} +\lambda y \right) & \text{if}\ (x,y)\in S_{0}\\[8pt]
\displaystyle
\left(-\sigma\left(x-\frac{1}{2}\right), \frac{1}{2} -\lambda y\right) & \text{if}\ (x,y)\in S_{1}
\end{array}\right.
\]
for some $\lambda>0$ with 
\begin{equation}\label{lambdasigma}
\lambda\sigma<1. 
\end{equation}
It follows immediately from $\sigma>2$ that we actually have $\lambda<\sigma^{-1}<1/2$. Thus, there is an affine horseshoe for $F$ as 
\begin{equation}\label{hs}
\Lambda=\Lambda_{F}=\bigcap_{n\in \mathbb{Z}}F^{n}(S_{0}\cup S_{1}).
\end{equation}
Then 
$F\vert_{\Lambda}$ is topologically conjugate to  
the full two-sided shift on two symbols  
by the homeomorphism $h=h_{F}: \Lambda_{F}  \to  \{ 0,1\} ^{\mathbb Z}$ 
given by
\[
\left(h(x) \right)_j = w \quad \text{if}\quad F^j(x) \in S_{w}, 
\]
where $(h(x))_j$ is the $j$th entry of $h(x)$. 
Let the fixed point 
$h_{F}^{-1}(\underline{0})$ of $F$ with $\underline{0}=(\ldots0 0 0 \ldots)$ 
be denoted by $p=p_{F}$, 
and hence it satisfies $p=(-a_{\rm u}, -a_{\rm s})$, where $a_{\rm u}=(2(1-\sigma^{-1}))^{-1}$ and $a_{\rm s}=(2(1-\lambda))^{-1}$. 

\vspace{3mm}
Next we consider any $f$ which is $C^{r}$-close to $F$. 
The intersection 
$Q\cap f^{-1}(Q)$ consists of two disjoint components, 
denoted by $S_{0,f}$ and $S_{1,f}$, such that
\[\lim_{f\to F}S_{i,f}=S_{i}\] for each $i\in \{0,1\}$. See Figure  \ref{fig0}.
Then we have the hyperbolic continuation 
$\Lambda_{f}=\bigcap_{n\in \mathbb{Z}}f^{n}(S_{0,f}\cup S_{1,f})$ of $\Lambda_{F}$.
Let us denote by $\lumfd(\Lambda_{f})$ the union of local unstable manifolds $\lumfd(x)$ in $Q$ with $x\in \Lambda_{f}$. 
We write
$B_{0,f}=f(S_{0,f})$, 
$B_{1,f}=f(S_{1,f})$
and denote by $G_{0,f}$
  the component of $Q\setminus \lumfd(\Lambda_{f})$ between $B_{0,f}$ and $B_{1,f}$.
Then, for any component $G_{f}$ of $Q\setminus \lumfd(\Lambda_{f})$
contained in $B_{0,f}\cup B_{1,f}$,
 there exists an integer $n\geq 1$ such that $f^{-n}(G_{f}) \subset G_{0,f}$.
For  such a  $G_{f}$ 
we have two rectangles  $B^{+}_{f}$ and $B^{-}_{f}$ 
which are the connected components of $f^{n}(B_{0,f})\cap Q$ and $f^{n}(B_{1,f})\cap Q$ adjacent to $G_{f}$. 
\begin{figure}[hbt]
\centering
\scalebox{0.84}{
\includegraphics[clip]{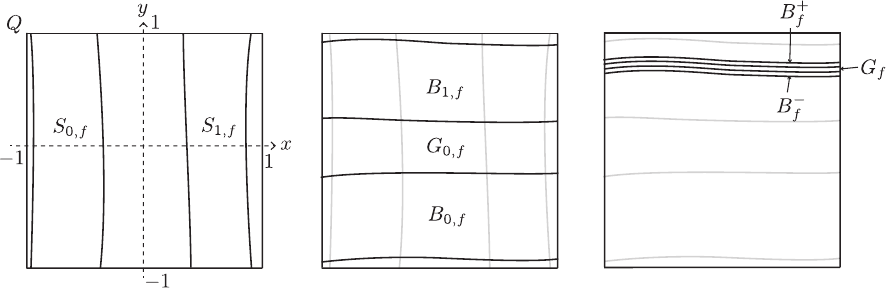}
}
\caption{Strips of $f$ inside $Q$.} 
\label{fig0}
\end{figure}
 
\begin{dfn}\label{defthick}
Let $x$  be a point of $\Lambda_{f}$ 
and 
$\ell$
a connected component of $\lsmfd(x)\setminus  \Lambda_{f}$ contained in $G_{f}$. 
\begin{itemize}
\item 
The \emph{stable thickness} of $\Lambda_{f}$ at $\ell$ is
\begin{equation}\label{ut'}
\tau(\Lambda_{f}, \ell)=\frac{\min\big\{|B_{f}^{-}\cap \lsmfd(x)|,\  |B_{f}^{+}\cap \lsmfd(x)|\big\}}{| {\ell} |},
\end{equation}
where $|\cdot|$ stands for the arc-length of the corresponding arc. Moreover
the \emph{stable thickness} of $\Lambda_{f}$ is defined by
\begin{equation}\label{ut}
\tau^{\rm s}(\Lambda_{f})=\inf_{\ell}\tau(\Lambda_{f}, \ell),
\end{equation}
where the infimum is taken over all connected components $\ell$ of $\lsmfd(x)\setminus  \Lambda_{f}$ 
contained in $B_{0,f}\cup G_{0,f}\cup B_{1,f}$.
\item 
The \emph{unstable thickness} $\tau^{\rm u}(\Lambda_{f})$ of $\Lambda_{f}$ is defined as the stable thickness with respect to $f^{-1}$.
\item 
The constant obtained by replacing `min' with `max' in \eqref{ut'} is called the \emph{stable denseness} of $\Lambda_{f}$ at $\ell$ 
and denoted by $\theta(\Lambda_{f}, \ell)$. 
Moreover, the constant obtained by replacing `inf' with `sup'  
and `$\tau$' with `$\theta$'
in \eqref{ut} 
 is called the \emph{stable denseness} of 
 $\Lambda_{f}$  and denoted by $\theta^{\rm s}(\Lambda_{f})$.
 \item 
The \emph{unstable denseness} $\theta^{\rm u}(\Lambda_{f})$ of $\Lambda_{f}$ is defined as the stable denseness with respect to $f^{-1}$.

\end{itemize}
\end{dfn}

The notion of thickness is often used to show the non-empty intersection of two Cantor sets. On the other hand, if a Cantor set has denseness bounded from above, then, every gap of it occupies a relatively large proportion compared to its adjacent bridges. See Subsection~\ref{071} for the definitions of bridges and gaps. This observation will be helpful in Subsection~\ref{183}.

Since the horseshoe $\Lambda$ given in \eqref{hs}
 is affine, we have 
\begin{equation}\label{thick0}
\tau^{\rm s}(\Lambda)=\theta^{\rm s}(\Lambda)=
\dfrac{\lambda}{1-2\lambda}=:\tau^{\rm s}, \quad 
\tau^{\rm u}(\Lambda)=\theta^{\rm u}(\Lambda)=
\dfrac{\sigma^{-1}}{1-2\sigma^{-1}}=:\tau^{\rm u}.
\end{equation} 
We consider the case 
 that  $\lambda$ and $\sigma$ satisfy the open condition 
\begin{equation}\label{thick1}
\tau^{\rm s}\tau^{\rm u}>1,
\end{equation} which
ensures that $C^{r}$-robust homoclinic tangencies occur by \eqref{f2} below.
Moreover, it follows from \eqref{lambdasigma} that $\lambda<\sigma^{-1}$, thus we have \[\tau^{\rm s}=\dfrac{\lambda}{1-2\lambda}<\dfrac{\sigma^{-1}}{1-2\sigma^{-1}}=\tau^{\rm u},\] which implies, according to \eqref{thick1}, that 
\begin{equation}\label{thick2}
\dfrac{\sigma^{-1}}{1-2\sigma^{-1}}=\tau^{\rm u}>1.
\end{equation}
As a result, the constant $\sigma$ actually satisfies 
\begin{equation}\label{sigma}
2<\sigma<3.
\end{equation}
In such a situation,
we assume that, for any $(x,y)$  in a small neighborhood $U$ of $(0,-a_{\rm s})$ in $(-2,2)^2$,  
\begin{equation}\label{f2}
F^{2}(x,y)=\left(-a_{\rm u}+\mu-\beta x^{2}+\gamma(y+a_{\rm s}), -\alpha x\right),
\end{equation}
where
$\mu$ is positive and will be adjusted later as we need, and 
$\alpha, \beta, \gamma$ are positive constants. See Figure~\ref{fig11}. Then \eqref{f2} preserves the orientation.
This completes $F$'s setup.
\begin{figure}[hbt]
\centering
\scalebox{1.0}{
\includegraphics[clip]{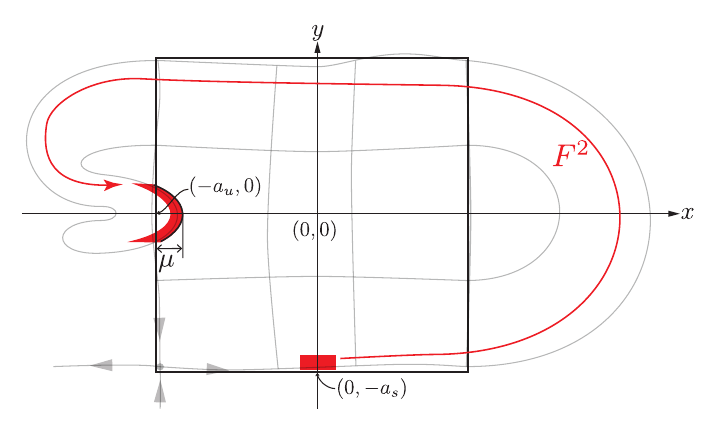}
}
\caption{The rectangle is mapped parabolically by $F^2$.} 
\label{fig11}
\end{figure}
\smallskip

\subsection{A small $C^r$ neighborhood $\mathcal{U}^r_F$ of $F$}\label{110}
Now, we are ready to consider a $C^r$ neighborhood of $F$ in $\Diff^r(M)$.
For a small $\varepsilon_0>0$, we write
\begin{equation}\label{037}
\underline{\lambda}=\lambda-\varepsilon_0,\ \overline{\lambda}=\lambda+\varepsilon_0,\
\underline{\sigma}=\sigma-\varepsilon_0,\ \overline{\sigma}=\sigma+\varepsilon_0.
\end{equation}
According to \eqref{lambdasigma} and \eqref{sigma}, we may suppose that 
\begin{equation}\label{045}
2<\overline{\sigma}<3\quad \text{and} \quad \overline{\lambda}\overline{\sigma}<1
\end{equation}
 by shrinking $\varepsilon_0$ if necessary.

Let $\pi_{x}$ and $\pi_{y}$ be 
the orthogonal projection to the $x$ and $y$-axes, respectively. 
We now consider 
a $C^{r}$-neighborhood $\mathcal{U}^{r}_{F}$ of $F$ in $\Diff^{r}(M)$ depending on $\varepsilon_0$ and 
satisfying  the following conditions:
\begin{align*}
&\sup\left\{
\| D(\pi_{y}\circ f)(x)\|\ :\ f\in \mathcal{U}^{r}_{F},\ x\in S_{0,f}\cup S_{1,f}
\right\}<\overline{\lambda}-\frac{\varepsilon_0}{2},\\
&\inf\left\{
m( D(\pi_{y}\circ f)(x))\ :\ f\in \mathcal{U}^{r}_{F},\ x\in S_{0,f}\cup S_{1,f}
\right\}>\underline{\lambda}+\frac{\varepsilon_0}{2},\\
&\sup\left\{
\| D(\pi_{x}\circ f)(x)\|\ :\ f\in \mathcal{U}^{r}_{F},\ x\in S_{0,f}\cup S_{1,f}
\right\}<\overline{\sigma}-\frac{\varepsilon_0}{2},\\
&\inf\left\{
 m(D(\pi_{x}\circ f)(x))\ :\ f\in \mathcal{U}^{r}_{F},\ x\in S_{0,f}\cup S_{1,f}
\right\}>\underline{\sigma}+\frac{\varepsilon_0}{2},
\end{align*}
where $\|\cdot\|$ and $m(\cdot)$ stand for
the operator and  minimum norms, respectively, of a given linear map. 
Then we may suppose that 
each $f\in \mathcal{U}^{r}_{F}$ has the horseshoe $\Lambda_{f}$ which is the continuation of $\Lambda$. 
Since \eqref{thick1} holds for $F$, by shrinking $\mathcal{U}^r_F$ if necessary, we can suppose that 
\begin{equation}\label{107}
\tau^{\rm s}(\Lambda_f)\tau^{\rm u}(\Lambda_f)>1
\end{equation}
holds for every $f$ in $\mathcal{U}^r_F$. This is because the stable thickness $\tau^{\rm s}(\Lambda_f)$ and the unstable thickness $\tau^{\rm u}(\Lambda_f)$ vary continuously on 
$f$. 
Similarly, note that the stable and unstable denseness also vary continuously on 
$f$, see \cite{N79, PT93}.
Combining this fact  with \eqref{thick0}, we  know that the 
 stable and unstable denseness of $\Lambda$ are positive.
So there exists $\theta=\theta(\mathcal{U}^r_F)>0$ such that
\begin{equation}\label{117}
\max \left\{ \sup_{f\in \mathcal{U}^r_F}\theta^{\rm s}(\Lambda_f), \sup_{f\in \mathcal{U}^r_F}\theta^{\rm u}(\Lambda_f)\right\}<\theta.
\end{equation}
Moreover, 
$f^{2}\vert_{U}$ is given by an expression close to  \eqref{f2} as follows:
\begin{equation}\label{f2'}
f^{2}(x,y)=\left(-\bar a_{\rm u}+\bar\mu-\bar\beta x^{2}+\bar\gamma(y+\bar a_{\rm s}), -\bar\alpha x\right)+h(x,y),
\end{equation}
where each coefficient is close to that in \eqref{f2} and 
$h(x,y)$ stands for the higher order terms containing $o(x^2)$ and $o(y)$.

Let $\mathcal{F}^{\rm s}$ and $\mathcal{F}^{\rm u}$ be 
local stable and unstable foliations  for $\Lambda_{f}$  defined on 
$S_{0,f}\cup S_{1,f}$ and $f(S_{0,f}\cup S_{1,f})$, respectively. These foliations certainly depend on $f$. Hence we also write $\mathcal{F}_{f}^{\rm s}$ and $\mathcal{F}_{f}^{\rm u}$ if the dependence need to be emphasized. 
Then, shrinking $\mathcal{U}^{r}_{F}$ again if necessary,  
by \eqref{thick1} and \eqref{f2'} we may assume that
the intersection between 
leaves of $\mathcal{F}^{\rm s}$ 
and  those of $f^{2}(\mathcal{F}^{\rm u})$
  contains the 
 $C^{1}$ arc of homoclinic quadratic tangencies of $\Lambda_{f}$, 
 denoted by 
 $L$. See \cite{PT93}. We call it a \emph{tangency curve} for simplicity.
On the other hand, the $f^{-2}$-image of  $L$ is denoted by 
 $\widetilde{L}$, see Figure \ref{fig1}. We point out that both $L$ and $\widetilde{L}$ depend on $f$ as well.
\begin{figure}[hbt]
\centering
\scalebox{1.1}{
\includegraphics[clip]{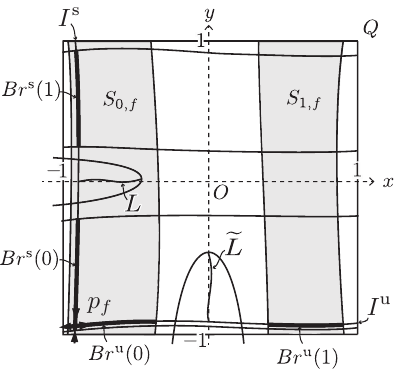}
}
\caption{The tangency curves $L$ and $\widetilde{L}$.} 
\label{fig1}
\end{figure}

 When the parameter $\bar \mu>0$ is fixed first and next it is slid by $\delta$ from  $\bar \mu$, that is, we define the $\delta$-slid perturbation $f_{\delta}$ of $f$ on $f(U)$ by letting
\begin{align*}
f^2_{\delta}(x,y)\
:&=f^2(x,y)+(\delta,0)\\
&=\left(-\bar a_{\rm u}+\bar\mu+\delta-\bar\beta x^{2}+\bar\gamma(y+\bar a_{\rm s}), -\bar\alpha x\right)+h(x,y).
\end{align*}
Since $f^2_{\delta}(\mathcal{F}^{\rm u})$ is slid by $\delta$ along the horizontal direction, we have the new $C^1$ arc of homoclinic tangencies between $f_{\delta}^2(\mathcal{F}^{\rm u})$ and $\mathcal{F}^{\rm s}$, which is denoted by $L(\delta)$. Moreover, we denote $f_{\delta}^{-2}(L(\delta))$ by $\widetilde{L}(\delta)$. It follows immediately that
\[\lim\limits_{\delta\to 0}L(\delta)=L \quad \text{and}\quad \lim\limits_{\delta\to 0}\widetilde{L}(\delta)=\widetilde{L}.\]

\section{Bridges, gaps and bounded distortion}\label{092}
Having established the neighborhood $\mathcal{U}^r_F$ of $F$ in the previous section, in this section, we conduct a detailed analysis of the structure of Cantor sets generated by elements in $\mathcal{U}^r_F$ along their tangency curves.
\subsection{Bridges and gaps}\label{071}
Let $f$ be any diffeomorphism in $\mathcal{U}^{r}_{F}$ 
with the wild Smale horseshoe $\Lambda_{f}$. 
The continuation of the saddle fixed point $p_{F}$ is denoted by $p_{f}$ and 
the connected components
$\smfd(p_{f})\cap Q$ and $\umfd(p_{f})\cap Q$ containing $p_{f}$ 
is denoted by $I^{\rm s}$ and $I^{\rm u}$, respectively. See Figure \ref{fig1}. 
Sometimes we also write $I^{\rm s}_f$ and $I^{\rm u}_f$ if their dependence on $f$ needs to be emphasized. Then we have two Cantor sets 
\begin{equation}\label{111}
\Lambda^{\rm s}_{f} =\Lambda_{f}\cap  I^{\rm s}
\quad \text{and}\quad  
\Lambda^{\rm u}_{f} =\Lambda_{f}\cap  I^{\rm u}.
\end{equation}
For these two Cantor sets, in a similar way as that in Definition \ref{defthick}, one can also define their thicknesses $\tau(\Lambda^{\rm s}_f)$ and  $\tau(\Lambda^{\rm u}_f)$. One can deduce that 
\[\tau(\Lambda^{\rm s}_f)=\tau^{\rm s}(\Lambda_f) \quad\text{and}\quad \tau(\Lambda^{\rm u}_f)=\tau^{\rm u}(\Lambda_f).\] 
The notion of thickness plays an important role when we are aiming to find the intersection of two given Cantor sets. Precisely, the following so-called Gap Lemma is quite helpful.

\begin{lem}[Gap Lemma \cite{N79,PT93}]\label{gaplemma}
Let $K_1, K_2$ be Cantor sets with thicknesses $\tau_1$ and $\tau_2$. If $\tau_1 \tau_2>1$, then one of the following three alternatives occurs: $K_1$ is contained in a gap of $K_2$; $K_2$ is contained in a gap of $K_1$; $K_1\cap K_2\not= \emptyset$.
\end{lem}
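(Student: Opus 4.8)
The plan is to prove the Gap Lemma by Newhouse's classical \emph{bridge and gap} argument. I would first fix terminology for a Cantor set $K\subset\R$: a \emph{bounded gap} of $K$ is a bounded connected component of $\R\setminus K$; given such a gap $U$ and one of its two boundary points $u$, the \emph{bridge} $C$ of $K$ at $u$ is the maximal closed interval with $u$ as an endpoint containing no point of any gap of $K$ of length $\ge|U|$ (so $C$ lies on the side of $u$ opposite $U$, and, the convex hull $\hat K$ being a bounded interval, the other endpoint of $C$ lies in $K$). The thickness is $\tau(K)=\inf|C|/|U|$ over all such pairs. The only use I will make of the hypothesis $\tau_1\tau_2>1$ is that every bounded gap $U$ of $K_i$ and every adjacent bridge $C$ satisfy $|C|\ge\tau_i|U|$.

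Assume that the first two alternatives fail and, for contradiction, that $K_1\cap K_2=\emptyset$. First I extract a \emph{linked pair}: a bounded gap $U$ of $K_1$ and a bounded gap $V$ of $K_2$ whose four endpoints strictly interleave (equivalently, each gap contains exactly one endpoint of the other). Since $K_1$ lies in no single complementary component of $K_2$, there is a bounded gap $V$ of $K_2$ with $K_1\cap V\neq\emptyset$ and $K_1\not\subseteq V$; a point of $K_1$ lying outside $V$ forces a bounded gap $U$ of $K_1$ to contain an endpoint of $V$ in its interior, while $K_2\not\subseteq$ a gap of $K_1$ (together with $K_1\cap V\neq\emptyset$) rules out $V\subseteq U$, so $(U,V)$ is linked.

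The core is an iteration turning a linked pair into a strictly smaller one. Given a linked pair $(U,V)$, normalize so that $U=(u,u^{+})$, $V=(v^{-},v)$ with $v^{-}<u<v<u^{+}$, and consider the bridge $C_1$ of $K_1$ at $u$ (extending left) and the bridge $C_2$ of $K_2$ at $v$ (extending right). From $|C_1|\,|C_2|\ge\tau_1\tau_2\,|U|\,|V|>|U|\,|V|$ and the trivial bounds $u-v^{-}<|V|$, $u^{+}-v<|U|$, at least one bridge reaches past the far endpoint of the partner gap; say $C_1$ contains $v^{-}$. If $v^{-}\in K_1$ then $v^{-}\in K_1\cap K_2$, contradicting our assumption; otherwise $v^{-}$ lies in a bounded gap $U^{\ast}$ of $K_1$ which, by the defining property of $C_1$, satisfies $|U^{\ast}|<|U|$ and $U^{\ast}\subseteq C_1$, and one checks that $(U^{\ast},V)$ is again a linked pair — the $K_1$-gap has strictly shrunk, the $K_2$-gap is unchanged. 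Symmetrically, if $C_2$ reaches past $u^{+}$ we obtain a linked pair $(U,V^{\ast})$ with $|V^{\ast}|<|V|$ and $U$ unchanged. The key point is that in a linked pair each gap contains exactly one of the two endpoints of its partner, so — distinct gaps of a Cantor set being disjoint — the same $K_2$-gap cannot survive two consecutive steps (otherwise the three distinct $K_1$-gaps produced would, by pigeonhole, include two containing the same endpoint of $V$, hence two equal gaps), and likewise for $K_1$. Therefore, unless the process terminates with an intersection point, in every two consecutive steps the $K_1$-gap length decreases once and the $K_2$-gap length decreases once; being non-increasing sequences of gap lengths of a Cantor set that decrease infinitely often, both tend to $0$. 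Picking a point $x_k$ in the overlap of the $k$-th linked pair — all lying in the bounded interval $\hat K_1\cap\hat K_2$ — any subsequential limit $x^{\ast}$ of $(x_k)$ then belongs to both $K_1$ and $K_2$, since a limit of points contained in gaps of vanishing length cannot lie in a gap. This contradicts $K_1\cap K_2=\emptyset$, so the third alternative must hold.

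The step I expect to require the most care is the iteration: choosing at each stage the endpoints at which to take the two bridges so as to reach the normalized configuration, verifying that the gap $U^{\ast}$ (or $V^{\ast}$) is bounded, sits inside the corresponding bridge, and forms a genuine linked pair, and the final limit argument that $x^{\ast}$ lands in both Cantor sets. The remaining ingredients — the terminology, the extraction of an initial linked pair, and the single quantitative input $|C|\ge\tau_i|U|$ — are routine.
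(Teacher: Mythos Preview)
Your argument is correct and is precisely the classical Newhouse bridge--gap argument. The paper does not supply its own proof of this lemma: it is quoted with citation to \cite{N79,PT93} and used as a black box, so there is nothing in the paper to compare your approach against. Your pigeonhole justification for why the same gap cannot persist through two consecutive iteration steps is a clean way to force the alternation; the one place that deserves a sentence more of care is the extraction of the initial linked pair, specifically ruling out the possibility that $K_1$ meets only the two \emph{unbounded} complementary components of $K_2$ (this is handled by observing that then $\hat K_2\subset\hat K_1$, so an endpoint of $\hat K_2$ lies in a bounded gap of $K_1$, and one may swap roles).
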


We now introduce 
bridges and gaps related to $\Lambda^{\rm s}_{f}$ and 
 $\Lambda^{\rm u}_{f}$. 
For each $i\in \{-1,0,1\}$, 
let $I^{\rm s}_{i}$ and 
$I^{\rm u}_{i}$ be the component of 
$I^{\rm s}\setminus \Lambda_{f}^{\rm s}$ 
and
$I^{\rm u}\setminus \Lambda_{f}^{\rm u}$
 such that 
$I^{\rm s}_{i}\cap \{y=i\}\neq \emptyset$ 
and 
$I^{\rm u}_{i}\cap \{x=i\}\neq \emptyset$.
Let $(Br^{\rm s}(0), Br^{\rm s}(1))$  be 
the pair of components of $I^{\rm s }\setminus (I^{\rm s }_{-1} \cup I^{\rm s}_{0} \cup I^{\rm s }_{1})$ 
and 
$(Br^{\rm  u}(0), Br^{\rm u}(1))$
the pair of components of $I^{\rm u}\setminus (I^{\rm u}_{-1} \cup I^{\rm u}_{0} \cup I^{\rm u}_{1})$
such that 
\[\pi_{y}(Br^{\rm s}(0))<0<\pi_{y}(Br^{\rm s}(1)), \quad
\pi_{x}(Br^{\rm u}(0))<0<\pi_{x}(Br^{\rm u}(1)). 
\]
Next, we consider other projections 
\begin{equation}\label{116}
\pi_{\mathcal{F}^{\rm s}}:S_{0,f}\cup S_{1,f}\to I^{\rm u},\quad
\pi_{\mathcal{F}^{\rm u}}: f(S_{0,f}\cup S_{1,f})\to I^{\rm s},
\end{equation}
where the former is along the leaves of $\mathcal{F}^{\rm s}$ and 
the latter is along the leaves of $\mathcal{F}^{\rm u}$. 
 Since both
 $\mathcal{F}^{\rm s}$ and $\mathcal{F}^{\rm u}$ are $C^{1}$-foliations and every leaf of them transversely meets $I^{\rm u}$ and $I^{\rm s}$, respectively,  
 $\pi_{\mathcal{F}^{\rm s}}$ and $\pi_{\mathcal{F}^{\rm u}}$ are 
$C^{1}$-submersions. Then
we have two pairs of horizontal strips and vertical strips in $Q$ defined as 
\begin{align*}
& \mathbb{B}r^{\rm s}(0) = (\pi_{\mathcal{F}^{\rm u}})^{-1}(Br^{\rm s}(0) ),\  \ 
\mathbb{B}r^{\rm s}(1) = (\pi_{\mathcal{F}^{\rm u}})^{-1}(Br^{\rm s}(1)),\\
& \mathbb{B}r^{\rm u}(0) = (\pi_{\mathcal{F}^{\rm s}})^{-1}(Br^{\rm u}(0) ),\  \ 
\mathbb{B}r^{\rm u}(1) = (\pi_{\mathcal{F}^{\rm s}})^{-1}(Br^{\rm u}(1)).
\end{align*}
Note that 
$\mathbb{B}r^{\rm s}(w)\subset f(S_{w,f})$  and 
$\mathbb{B}r^{\rm u}(w)\subset S_{w,f}$ 
 for each $w \in\{0,1\}$. See Figure \ref{fig1}.

For every integer $n \geq 1$, let $\underline{w}$ be a binary code of $n$ entries, that is, 
$\underline{w}= (w_{1} \dots w_{n}) \in \{0,1\}^{n}$. For such $n$ and $w_{n}$, we define 
\begin{align*}
\mathbb{B}r^{\rm s}(n;\underline{w})&=\left\{ x \in  Q\ :\  f^{-i+1}(x) \in  \mathbb{B}r^{\rm s}(w_{i}),\ i = 1,\dots,n\right\},\\
\mathbb{B}r^{\rm u}(n;\underline{w})&=\left\{ x \in  Q\ :\  f^{i-1}(x) \in  \mathbb{B}r^{\rm u}(w_{i}),\ i = 1,\dots,n\right\},
\end{align*}
and hence $\mathbb{B}r^{\rm s}(1; w)=\mathbb{B}r^{\rm s}(w)$ 
and 
$\mathbb{B}r^{\rm u}(1; w)=\mathbb{B}r^{\rm u}(w)$
 for each $w\in \{0,1\}$. 
 Given $n\in\mathbb{N}$ and $\underline{w}\in \{0,1\}^{n}$, 
 we call $\mathbb{B}r^{\rm s}(n;\underline{w})$ the 
\emph{s-bridge strip} and $\mathbb{B}r^{\rm u}(n;\underline{w})$ the 
\emph{u-bridge strip}. See Figure \ref{fig2}.
\begin{figure}[hbt]
\centering
\scalebox{1.1}{
\includegraphics[clip]{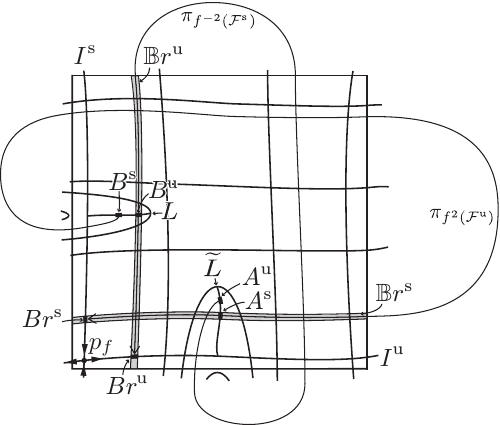}
}
\caption{The notation $(n;\underline{w})$ of each bridge and strip is omitted.} 
\label{fig2}
\end{figure}
Observe that, for each integer $n\geq 1$,  
$(\mathbb{B}r^{\rm s}(n;\underline{w}))_{\underline{w} \in  \{0,1\}^{n}}$ and 
$(\mathbb{B}r^{\rm u}(n;\underline{w}))_{\underline{w} \in  \{0,1\}^{n}}$
consist of $2^{n}$ mutually disjoint horizontal and vertical strips, respectively. It is easy to verify that 
\[f^{n}(\mathbb{B}r^{\rm u}(n;w_1\dots w_n))=\mathbb{B}r^{\rm s}(n;w_n\dots w_1)=\mathbb{B}r^{\rm s}(n;[w_1\dots w_n]^{-1}).\]
Moreover, we set 
\begin{equation}\label{135}
\begin{split}
Br^{\rm s}(n;\underline{w})&=\mathbb{B}r^{\rm s}(n;\underline{w}) \cap  I^{\rm s} = \pi_{\mathcal{F}^{\rm u}} (\mathbb{B}r^{\rm s}(n;\underline{w})),\\
Br^{\rm u}(n;\underline{w})&=\mathbb{B}r^{\rm u}(n;\underline{w}) \cap  I^{\rm u} = 
\pi_{\mathcal{F}^{\rm s}}(\mathbb{B}r^{\rm u}(n;\underline{w})),
\end{split}
\end{equation} 
which are called  
\emph{{\rm s}-bridge of $\Lambda^{\rm s}_f$} and 
\emph{{\rm u}-bridge of $\Lambda^{\rm u}_f$}, respectively.
In these notations, $n$ is called the \emph{generation} and $\underline{w}$ the \emph{itinerary} for the corresponding bridges and bridge strips.
The \emph{length} of $\underline{w}$, denoted by $|\underline{w}|$, 
is defined as the cardinality of binary codes that comprise $\underline{w}$, 
that is, $|\underline{w}|=|(w_{1} \dots w_{n})|=n$.

Next,   
the maximum subinterval of $Br^{\rm s}(n;\underline{w})$ 
 between $Br^{\rm s}(n+1;\underline{w}0)$ and $Br^{\rm s}(n+1;\underline{w}1)$ 
 is denoted by $Ga^{\rm s}(n;\underline{w})$, 
 while 
 the maximum subinterval of $Br^{\rm u}(n;\underline{w})$ 
 between $Br^{\rm u}(n+1;\underline{w}0)$ and $Br^{\rm u}(n+1;\underline{w}1)$ 
 is denoted by $Ga^{\rm u}(n;\underline{w})$, 
 which are respectively called the \emph{{\rm s}-gap} and \emph{{\rm u}-gap} of generation $n$ and itinerary $\underline{w}$.
 The two bridges $Br^{{\rm s}(\rm u)}(n+1;\underline{w}0)$ and $Br^{{\rm s}(\rm u)}(n+1;\underline{w}1)$ 
 are called {\it adjacent ${\rm s}({\rm u})$-bridges} of $Ga^{{\rm s}(\rm u)}(n;\underline{w})$. If it is necessary to specify the diffeomorphism $f$ concerning the ${\rm s(u)}$-bridge and gap, we may write $Br^{\rm s(u)}_{f}(n;\underline{w})$ and $Ga^{\rm s(u)}_{f}(n;\underline{w})$.

To introduce the ${\rm s(u)}$-bridges and ${\rm s(u)}$-gaps on the tangency curves $L$ and $\widetilde{L}$, we consider extended projections
\[
\pi_{f^{-2}(\mathcal{F}^{\rm s})}:f^{-2}(S_{0,f}\cup S_{1,f})\to I^{\rm u},\quad 
\pi_{f^{2}(\mathcal{F}^{\rm u})}:f^{2}( f(S_{0,f}\cup S_{1,f}) )\to I^{\rm s},
\]
where the former is the projection along the leaves of $f^{-2}(\mathcal{F}^{\rm s})$ and 
the latter is that along the leaves of $f^{2}(\mathcal{F}^{\rm u})$. 
Let 
$B^{\rm s}(n;\underline{w})$  
and 
$B^{\rm u}(n;\underline{w})$ 
be the sub-arcs of $L$ with the following conditions: 
\begin{equation}\label{113}
Br^{\rm s}(n;\underline{w})=\pi_{f^{2}(\mathcal{F}^{\rm u})}(B^{\rm s}(n;\underline{w})),\quad
Br^{\rm u}(n;\underline{w})=\pi_{\mathcal{F}^{\rm s}}(B^{\rm u}(n;\underline{w})),
\end{equation}
which are called \emph{{\rm s}} and \emph{{\rm u}-bridges} on $L$, respectively. Moreover, in the same manner, the two Cantor sets $\Lambda^{\rm s}_f$ and $\Lambda^{\rm u}_f$ defined in \eqref{111} also have their projections $\Lambda^{\rm s}_L$ and $\Lambda^{\rm u}_L$ on $L$ defined by 
\begin{equation}\label{138}
\Lambda^{\rm s}_f=\pi_{f^2(\mathcal{F}^{\rm u})}(\Lambda^{\rm s}_L)\quad \mbox{and} \quad \Lambda^{\rm u}_f=\pi_{\mathcal{F}^{\rm s}}(\Lambda^{\rm u}_L).
\end{equation}
On the other hand, the
\emph{{\rm s}} and \emph{{\rm u}-bridges} $A^{\rm s}(n;\underline{w})$ 
and 
$A^{\rm u}(n;\underline{w})$ on $\widetilde{L}$ are defined by 
\begin{equation}\label{114}
Br^{\rm s}(n;\underline{w})= 
\pi_{\mathcal{F}^{\rm u}}(A^{\rm s}(n;\underline{w})), \quad
Br^{\rm u}(n;\underline{w})= 
\pi_{f^{-2}(\mathcal{F}^{\rm s})}(A^{\rm u}(n;\underline{w})),
\end{equation}
respectively. Similarly, we can also define ${\rm s(u)}$-gaps on $L$ and $\widetilde{L}$ respectively. For instance, the maximum subinterval of $B^{\rm s}(n;\underline{w})$ 
 between $B^{\rm s}(n+1;\underline{w}0)$ and $B^{\rm s}(n+1;\underline{w}1)$ 
 is denoted by $G^{\rm s}(n;\underline{w})$, 
 while 
 the maximum subinterval of $B^{\rm u}(n;\underline{w})$ 
 between $B^{\rm u}(n+1;\underline{w}0)$ and $B^{\rm u}(n+1;\underline{w}1)$ 
 is denoted by $G^{\rm u}(n;\underline{w})$, 
 which are respectively called the \emph{{\rm s}-gap} and \emph{{\rm u}-gap} of generation $n$ and itinerary $\underline{w}$ on $L$.

\subsection{Bounded distortion of bridges}\label{061}
The following lemma and its remark are useful when we estimate the ratios of the lengths of bridges with different generations. Recall that $\mathcal{U}^r_F$ is the small neighborhood of $F$ given in Subsection \ref{110} and $\underline{\lambda}$, $\overline{\lambda}$, $\underline{\sigma}$, $\overline{\sigma}$ are constants defined in \eqref{037}.
For any $f\in \mathcal{U}^r_F$, the \emph{length} of an arc $J$ in $I^{\rm s}_{f}$, $I^{\rm u}_{f}$, $L_{f}$ or $\widetilde{L}_{f}$ means its arc-length, which is denoted by $|J|$.

\begin{lem}\label{lem4}
For any $f\in \mathcal{U}^r_F$, $n\in \mathbb{N}$  and $\underline{w}\in \{0,1\}^{n}$, 
let $Br^{\rm s}(n;\underline{w})$  
be the {\rm s}-bridge of $\Lambda^{\rm s}_f$
and $Br^{\rm u}(n;\underline{w})$ the {\rm u}-bridge of $\Lambda^{\rm u}_f$, respectively. 
Then, for $i=0,1$, we have
\begin{align}
\ubar{\lambda}&\le \dfrac{|Br^{\rm s}(n+1;\underline{w}i)|}{|Br^{\rm s}(n;\underline{w})|} \le \overline{\lambda};\\
\overline{\sigma}^{-1}&\le \dfrac{|Br^{\rm u}(n+1;\underline{w}i)|}{|Br^{\rm u}(n;\underline{w})|} \le \ubar{\sigma}^{-1}.
\end{align}
\end{lem}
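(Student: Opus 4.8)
The plan is to reduce the statement to a one-step expansion/contraction estimate for the maps $\pi_y\circ f$ and $\pi_x\circ f$ restricted to the strips $S_{0,f}\cup S_{1,f}$, using the bounds defining $\mathcal{U}^r_F$ in Section~\ref{110} together with the recursive definition of the bridges. First I would record that, by the very definition of the s-bridge strips, $\mathbb{B}r^{\rm s}(n+1;\underline{w}i)$ is the image under $f$ of $\mathbb{B}r^{\rm s}(n;\underline{w})\cap f^{-1}(\mathbb{B}r^{\rm s}(i))$, intersected with $Q$; passing through the projection $\pi_{\mathcal{F}^{\rm u}}$ and using \eqref{135}, this translates into a relation between the arcs $Br^{\rm s}(n+1;\underline{w}i)$ and $Br^{\rm s}(n;\underline{w})$ sitting inside $I^{\rm s}$. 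The key geometric point is that along $I^{\rm s}$ (which is, up to a $C^1$ change of coordinates depending on $f$, essentially a vertical segment) the relevant contraction factor is governed by the $y$-derivative of $f$ on the strips, and symmetrically for $I^{\rm u}$ and the $x$-derivative of $f^{-1}$, i.e.\ the $x$-derivative of $f$.

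The main steps, in order, would be: (1) fix $f\in\mathcal{U}^r_F$, $n$, $\underline{w}$, and $i\in\{0,1\}$, and express $Br^{\rm s}(n+1;\underline{w}i)$ as the image of a sub-arc of $Br^{\rm s}(n;\underline{w})$ under an appropriate composition of $f$ and the holonomy $\pi_{\mathcal{F}^{\rm u}}$; (2) apply the mean value theorem to the scalar map obtained by restricting to the arc, so that the ratio $|Br^{\rm s}(n+1;\underline{w}i)|/|Br^{\rm s}(n;\underline{w})|$ equals $\| D(\pi_y\circ f)\|$ evaluated at some intermediate point (more precisely, pinched between $m(D(\pi_y\circ f))$ and $\|D(\pi_y\circ f)\|$ at points of $S_{0,f}\cup S_{1,f}$), up to the holonomy correction; (3) invoke the four defining inequalities of $\mathcal{U}^r_F$ to bound this quantity between $\underline\lambda+\varepsilon_0/2$ and $\overline\lambda-\varepsilon_0/2$, hence a fortiori between $\underline\lambda$ and $\overline\lambda$, which absorbs the (uniformly small, controlled by $\varepsilon_0$) distortion coming from the $C^1$ foliations and from the nonlinearity of $f$; (4) run the mirror argument with $f^{-1}$, $\pi_x$, and $\mathcal{F}^{\rm s}$ to get the $\overline\sigma^{-1}\le|Br^{\rm u}(n+1;\underline{w}i)|/|Br^{\rm u}(n;\underline{w})|\le\underline\sigma^{-1}$ bound, noting that $m(D(\pi_x\circ f))^{-1}\le\ubar\sigma^{-1}$-type inequalities follow by taking reciprocals of the third and fourth displayed conditions in Section~\ref{110}.

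The step I expect to be the main obstacle is (2)--(3): namely, controlling the holonomy $\pi_{\mathcal{F}^{\rm u}}$ (respectively $\pi_{\mathcal{F}^{\rm s}}$) well enough that it does not spoil the sharp bounds $\underline\lambda,\overline\lambda$. Since $\mathcal{F}^{\rm u}$ is only a $C^1$-foliation and $I^{\rm s}$ is not literally the $y$-axis, the ratio of arc-lengths is not exactly $\|D(\pi_y\circ f)\|$ but a product of that factor with a holonomy Jacobian that is close to $1$; one must argue that, after shrinking $\mathcal{U}^r_F$ (which we are free to do, as in Section~\ref{110}), the combined distortion stays within the slack $\varepsilon_0/2$ built into the defining inequalities. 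This is where the precise choice of the neighborhood in Section~\ref{110}, with its margins $\pm\varepsilon_0/2$ relative to $\underline\lambda,\overline\lambda,\underline\sigma,\overline\sigma$, is used in an essential way; everything else is a routine application of the mean value theorem and the chain rule, which I would not write out in detail.
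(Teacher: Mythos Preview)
The paper does not actually prove this lemma; it simply cites \cite[Lemma 4.1]{KS17}. Your sketch is essentially the argument given there: reduce the ratio of consecutive bridge lengths to a one-step derivative estimate on $S_{0,f}\cup S_{1,f}$ via the mean value theorem, and use the $\varepsilon_0/2$ margins built into the definition of $\mathcal{U}^r_F$ to absorb the holonomy distortion from the $C^1$ foliations $\mathcal{F}^{\rm s},\mathcal{F}^{\rm u}$. So your approach is correct and coincides with the referenced proof; there is nothing to add.
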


\begin{pf}
See \cite[Lemma 4.1]{KS17} for the proof. 
\end{pf}

\begin{remark}\label{139}
When the generation $n$ is sufficiently large, since 
$\pi_{f^{2}(\mathcal{F}^{\rm u})}$ and $\pi_{\mathcal{F}^{\rm s}}$ are almost affine, the same conclusion also holds for s-bridges and u-bridges on $L$. Precisely, for $i=0,1$, we have that 
\begin{align}
\ubar{\lambda}&\le \dfrac{|B^{\rm s}(n+1;\underline{w}i)|}{|B^{\rm s}(n;\underline{w})|} \le \overline{\lambda};\label{073}\\
\overline{\sigma}^{-1}&\le \dfrac{|B^{\rm u}(n+1;\underline{w}i)|}{|B^{\rm u}(n;\underline{w})|} \le \ubar{\sigma}^{-1}.
\end{align}
\end{remark}

Let $B^{\rm s}$ be an ${\rm s}$-bridge on $L$. 
For a given $\delta$ with $|\delta|$ sufficiently small, 
the bridge in the slid tangency curve $L(\delta)$ with the same itinerary as that of $B^{\rm s}$ is denoted by $B^{\rm s}(\delta)$ and called the \emph{$\delta$-slid ${\rm s}$-bridge} of $B^{\rm s}$.
The \emph{$\delta$-slid ${\rm u}$-bridge} is defined in a similar way. 
Sometimes we also call them  $\delta$-slid bridges for simplicity. Obviously, $\delta$-slid bridges are bridges associated to $f_{\delta}$. It follows immediately from the definition that 
\begin{equation}\label{115}
\lim\limits_{\delta\to 0}B^{\rm s}(\delta)=B^{\rm s} \quad \text{and }\quad \lim\limits_{\delta\to 0}B^{\rm u}(\delta)=B^{\rm u}.
\end{equation}

The next lemma indicates that $|B^{\rm s}|$ and $|B^{\rm s}(\delta)|$ (also $|B^{\rm u}|$ and $|B^{\rm u}(\delta)|$) do not differ a lot when $|\delta|$ is small.

\begin{lem}\label{lem3}
For any $f\in \mathcal{U}^r_F$ and $\delta\in\mathbb{R}$ close to $0$, 
there is a constant $c>0$  such that, 
for any 
{\rm s}-bridge $B^{\rm s}$ and {\rm u}-bridge $B^{\rm u}$ on $L$
and their
 $\delta$-slid bridges $B^{\rm s}(\delta)$ and  $B^{\rm u}(\delta)$ on $L(\delta)$, 
their lengths satisfy the following length estimations.
\begin{enumerate}[\rm (1)]
\item $(1-c|\delta|)|B^{\rm s}|\leq |B^{\rm s}(\delta)|\leq (1+c|\delta|)|B^{\rm s}|$,
\item $(1-c|\delta|)|B^{\rm u}|\leq |B^{\rm u}(\delta)|\leq (1+c|\delta|)|B^{\rm u}|$ and
\item
$(1-c|\delta|)|B^{\rm s}\cap B^{\rm u}|-\kappa|\delta|
\leq  |B^{\rm s}(\delta)\cap B^{\rm u}(\delta)| \leq(1+c|\delta|)|B^{\rm s}\cap B^{\rm u}|+\kappa|\delta|,$
\end{enumerate}
where $\kappa>2$ is some constant independent of $f$ and $\delta$.
\end{lem}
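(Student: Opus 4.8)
\emph{Proof plan.} The plan is to reduce all three estimates to a single fact about how the tangency curve reacts to the slide. Since the perturbation $f_\delta$ differs from $f$ only on the localized set $f(U)$, which is disjoint from $S_{0,f}\cup S_{1,f}$, from $f(S_{0,f}\cup S_{1,f})$ and from $I^{\rm s}_f\cup I^{\rm u}_f$, the horseshoe $\Lambda_f$, the arcs $I^{\rm s}_f,I^{\rm u}_f$, the Cantor sets $\Lambda^{\rm s}_f,\Lambda^{\rm u}_f$, the foliations $\mathcal{F}^{\rm s},\mathcal{F}^{\rm u}$, and hence all the bridges $Br^{\rm s}(n;\underline{w})\subset I^{\rm s}_f$ and $Br^{\rm u}(n;\underline{w})\subset I^{\rm u}_f$, are left unchanged by the slide; only the tangency curve is affected, $L$ becoming $L(\delta)$. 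By the defining relations \eqref{113}, applied to both $f$ and $f_\delta$, the $\delta$-slid bridges are the sub-arcs of $L(\delta)$ characterized by $\pi_{f^2_\delta(\mathcal{F}^{\rm u})}(B^{\rm s}(\delta))=\pi_{f^2(\mathcal{F}^{\rm u})}(B^{\rm s})$ and $\pi_{\mathcal{F}^{\rm s}}(B^{\rm u}(\delta))=\pi_{\mathcal{F}^{\rm s}}(B^{\rm u})$. Hence, on the relevant portions of $L$, the two $C^1$ diffeomorphisms onto $L(\delta)$ defined by
\[
H_\delta=\left(\pi_{f^2_\delta(\mathcal{F}^{\rm u})}|_{L(\delta)}\right)^{-1}\circ\left(\pi_{f^2(\mathcal{F}^{\rm u})}|_{L}\right),
\qquad
G_\delta=\left(\pi_{\mathcal{F}^{\rm s}}|_{L(\delta)}\right)^{-1}\circ\left(\pi_{\mathcal{F}^{\rm s}}|_{L}\right)
\]
satisfy $H_\delta(B^{\rm s})=B^{\rm s}(\delta)$, $G_\delta(B^{\rm u})=B^{\rm u}(\delta)$ and $H_0=G_0=\mathrm{id}$.

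The heart of the proof is the uniform first-order estimate: there is $C_0>0$ (depending only on $\mathcal{U}^r_F$) such that, for every $f\in\mathcal{U}^r_F$ and every sufficiently small $\delta$,
\[
\|DH_\delta-\mathrm{id}\|_{C^0}+\|DG_\delta-\mathrm{id}\|_{C^0}+\|H_\delta-\mathrm{id}\|_{C^0}+\|G_\delta-\mathrm{id}\|_{C^0}\le C_0|\delta|.
\]
To establish this I would use that $f^2_\delta=f^2+(\delta,0)$ depends affinely on $\delta$, so that $Df^2_\delta=Df^2$ is independent of $\delta$ and $D(f^2_\delta)^{-1}$ is Lipschitz in $\delta$ — this is where $r\ge 2$ enters, since then $(f^2)^{-1}\in C^{r-1}\subset C^1$. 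Consequently the leaves of $f^2_\delta(\mathcal{F}^{\rm u})$, the holonomy $\pi_{f^2_\delta(\mathcal{F}^{\rm u})}$, the tangency curve $L(\delta)$ — which by the quadratic nature of the tangencies in \eqref{f2'} is obtained locally, exactly as in \cite{PT93} with $\delta$ as an extra parameter, from an implicit equation with non-vanishing transverse derivative — and the restricted holonomy $\pi_{f^2_\delta(\mathcal{F}^{\rm u})}|_{L(\delta)}$, all depend on $\delta$ with first derivative bounded uniformly over $\mathcal{U}^r_F$. Since $H_0=G_0=\mathrm{id}$, a Taylor expansion in $\delta$ then gives the displayed bound; in particular this quantifies the convergence \eqref{115}.

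Granting the estimate, (1) and (2) follow immediately: parametrizing by arc length, the change-of-variables formula gives $|B^{\rm s}(\delta)|=\int_{B^{\rm s}}\|DH_\delta\|$ with $\|DH_\delta(q)\|=1+O(|\delta|)$ uniformly in $q$, whence $(1-c|\delta|)|B^{\rm s}|\le|B^{\rm s}(\delta)|\le(1+c|\delta|)|B^{\rm s}|$ with $c=C_0$, and the same argument with $G_\delta$ yields (2).

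For (3), note that $B^{\rm s}\cap B^{\rm u}$ and $B^{\rm s}(\delta)\cap B^{\rm u}(\delta)$ are (possibly empty) sub-arcs of $L$ and of $L(\delta)$, that $B^{\rm s}(\delta)=H_\delta(B^{\rm s})$, and that $B^{\rm u}(\delta)=\Psi_\delta\big(H_\delta(B^{\rm u})\big)$ with $\Psi_\delta:=G_\delta\circ H_\delta^{-1}\colon L(\delta)\to L(\delta)$ a homeomorphism satisfying $\|\Psi_\delta-\mathrm{id}\|_{C^0}\le 2C_0|\delta|$. On one hand, since $H_\delta$ is injective, $|H_\delta(B^{\rm s})\cap H_\delta(B^{\rm u})|=\int_{B^{\rm s}\cap B^{\rm u}}\|DH_\delta\|$ lies in $[(1-c|\delta|)|B^{\rm s}\cap B^{\rm u}|,(1+c|\delta|)|B^{\rm s}\cap B^{\rm u}|]$; on the other hand, the length of the intersection of the fixed arc $H_\delta(B^{\rm s})$ with a variable arc depends only on the two endpoints of the latter, so passing from $H_\delta(B^{\rm u})$ to $B^{\rm u}(\delta)=\Psi_\delta(H_\delta(B^{\rm u}))$ alters $|B^{\rm s}(\delta)\cap B^{\rm u}(\delta)|$ by at most $4C_0|\delta|$. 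Combining these, setting $\kappa:=4C_0+3>2$, and collecting any remaining higher-order contributions into $O(\delta^2)$, gives the two-sided estimate in (3). The main obstacle is precisely the uniform first-order estimate of the second paragraph: because the tangency curves and the invariant foliations are only finitely smooth, one must check that differentiating the tangency and holonomy equations in $\delta$ stays within the available regularity — this, together with the affine form of the slide, is exactly what $2\le r$ buys; everything downstream is elementary one-dimensional geometry and uses neither the Gap Lemma \ref{gaplemma} nor the bounded-distortion Lemma \ref{lem4}.
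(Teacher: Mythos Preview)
Your proposal is correct and follows essentially the same route as the paper: both arguments exploit that the slide leaves $\Lambda_f$, $\mathcal{F}^{\rm s}$, $\mathcal{F}^{\rm u}$ and the bridges on $I^{\rm s},I^{\rm u}$ untouched and only moves the tangency curve, so that the passage $B^{\rm s(u)}\mapsto B^{\rm s(u)}(\delta)$ is governed by a holonomy-type correspondence which is $O(|\delta|)$-close to the identity in $C^1$. The paper carries this out more informally by tracking endpoints $\boldsymbol a,\boldsymbol b,\boldsymbol c,\boldsymbol d$ and invoking the $C^1$ convergence $L(\delta)\to L$, whereas you package the same idea into the explicit diffeomorphisms $H_\delta,G_\delta$ and the composite $\Psi_\delta=G_\delta\circ H_\delta^{-1}$; your treatment of (3) via $\Psi_\delta$ is exactly the endpoint-shift $|\boldsymbol a\boldsymbol a'|_L\le\kappa|\delta|$ of the paper, just written invariantly.
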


\begin{figure}[hbt]
\centering
\scalebox{1.0}{
\includegraphics[clip]{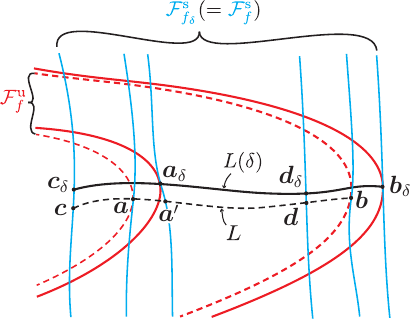}
}
\caption{Bridges on $L$ and $L(\delta)$.} 
\label{fig3}
\end{figure}
\begin{proof}
Let $\boldsymbol{a},\boldsymbol{b}\in L$ be the endpoints of $B^{\rm u}$ and $\boldsymbol{a}_{\delta},\boldsymbol{b}_{\delta}\in L(\delta)$ the endpoints of $B^{\rm u}(\delta)$. The interval in $L$ between $\boldsymbol{a}$ and $\boldsymbol{b}$ and that in $L_\delta$ between $\boldsymbol{a}_\delta$ and $\boldsymbol{b}_\delta$ are denoted by $\boldsymbol{a}\boldsymbol{b}$ and $\boldsymbol{a}_\delta\boldsymbol{b}_\delta$, respectively. Obviously, by \eqref{115}, we have
\[\lim\limits_{\delta\to 0}\boldsymbol{a}_{\delta}=\boldsymbol{a} \quad \text{and} \quad\lim\limits_{\delta\to 0}\boldsymbol{b}_{\delta}=\boldsymbol{b}, \]
because the leaves of $\mathcal{F}^{\rm s}_f$ and $\mathcal{F}^{\rm u}_f$ depend continuously on $f$. See Figure \ref{fig3}.
Moreover, since $L(\delta)$ $C^1$ converges to $L$ as $\delta\to 0$, there exists a constant $c>0$ independent of $\delta$ such that
\[(1-c|\delta|)|\boldsymbol{a}\boldsymbol{b}|_L\le |\boldsymbol{a}_{\delta}\boldsymbol{b}_{\delta}|_{L(\delta)}\le(1+c|\delta|)|\boldsymbol{a}\boldsymbol{b}|_L,\]
where we denote the arc-lengths of the bridge $B^{\rm u}\subset L$ and $B^{\rm u}(\delta)\subset L(\delta)$ by $|\boldsymbol{a}\boldsymbol{b}|_L$ and $|\boldsymbol{a}_{\delta}\boldsymbol{b}_{\delta}|_{L(\delta)}$ respectively. This proves (2), while (1) can be shown similarly. 

Now, let us prove (3). Under the same notations as above, let $\boldsymbol{c}, \boldsymbol{d}\in L$ be the endpoints of $B^{\rm s}$ and $\boldsymbol{c}_{\delta},\boldsymbol{d}_{\delta}\in L(\delta)$ the endpoints of $B^{\rm s}(\delta)$. Thus, we have
\[|B^{\rm s}\cap B^{\rm u}|=|\boldsymbol{a}\boldsymbol{d}|_L  \quad\text{and}\quad   |B^{\rm s}(\delta)\cap B^{\rm u}(\delta) |=|\boldsymbol{a}_\delta \boldsymbol{d}_\delta|_{L(\delta)}. \]
Let $\boldsymbol{a}'\in L$ be the intersection of $\mathcal{F}^{\rm s}_{f_{\delta}}(\boldsymbol{a}_{\delta})$ and $L$, where $\mathcal{F}^{\rm s}_{f_{\delta}}(\boldsymbol{a}_{\delta})$ is the leaf of $\mathcal{F}^{\rm s}_{f_{\delta}}$ 
passing 
through $\boldsymbol{a}_\delta$. Since $\boldsymbol{a}_\delta \boldsymbol{d}_\delta\subset L(\delta)$ is the $\delta$-slid segment of $\boldsymbol{a}'\boldsymbol{d}\subset L$, by applying the same argument as that in the proof of (1) to $\boldsymbol{a}_\delta \boldsymbol{d}_\delta$ and $\boldsymbol{a}'\boldsymbol{d}$, we get
\begin{equation}\label{066}
|\boldsymbol{a}_\delta \boldsymbol{d}_\delta|_{L(\delta)} \ge (1-c|\delta|)|\boldsymbol{a}'\boldsymbol{d}|_L=(1-c|\delta|)\big(|\boldsymbol{a}\boldsymbol{d}|_L-|\boldsymbol{a}\boldsymbol{a}'|_L\big).
\end{equation}
On the other hand, again by the $C^1$ dependence of $\boldsymbol{a}_\delta$ on $\delta$, we have
\begin{equation}\label{067}
|\boldsymbol{a}\boldsymbol{a}'|_L\le \kappa|\delta|
\end{equation}
for some constant $\kappa>2$ which does not depend on $f$ and $\delta$. Now, \eqref{066} and \eqref{067} together imply
\[|B^{\rm s}(\delta)\cap B^{\rm u}(\delta)|\geq(1-c|\delta|)|B^{\rm s}\cap B^{\rm u}|-\kappa|\delta|+c\kappa \delta^2.\]
By replacing $\kappa$ with a larger number (still denoted by $\kappa$) if necessary, we obtain the first inequality of (3) because $c\kappa \delta^2$ is much smaller than $\kappa|\delta|$ when $\delta$ is small enough. The other inequality of (3) can be proven similarly. This completes the proof of Lemma \ref{lem3}.
\end{proof}

\section{Linking Lemma}\label{062}
The key result of this section, Lemma \ref{lem1} (Linking Lemma), will be repeatedly invoked in the next section. It tells us that, if we have a linked pair of bridges, then by the $\delta$-slid perturbation with $|\delta|$ arbitrarily small, we can obtain two new linked pairs that correspond to the sub-bridges of the original ones. 

Fix two bridges $B_1$ and $B_2$ on $L$, we say that they are {\it linked} or $(B_1, B_2)$ is a {\it linked pair} if $B_1\cap B_2\not=\emptyset$ and neither $B_1$ is contained in the interior of any gap of $B_2$ nor $B_2$ is contained in the interior of any gap of $B_1$. Suppose $(B_1, B_2)$ is a linked pair. 
\begin{itemize}
\item For a given $\xi>0$, we say that $B_1$ and $B_2$ are {\it $\xi$-linked} if \[|B_1\cap B_2|\ge\xi\min \{|B_1|,|B_2|\}.\]
\item We say that $(B_1,B_2)$ is {\it proportional} if there exists a constant $K\in(0,1)$ independent of $B_1$ and $B_2$ such that either
\[K|B_1|\le |B_2| \le |B_1|\quad \text{or} \quad K|B_2|\le |B_1| \le |B_2|\]
holds.
\end{itemize}   
Two s-bridges $B_1^{\rm s}$ and $B_2^{\rm s}$ are called {\it related} if they are the two maximal proper sub-bridges 
of another ${\rm s}$-bridge $B^{\rm s}$. In this case, the gap of $B^{\rm s}$ which lies between $B_1^{\rm s}$ and $B_2^{\rm s}$ is called the {\it center gap} of $B^{\rm s}$. Similar definitions can be given for ${\rm u}$-bridges and ${\rm u}$-gaps.

Let $\xi_0$ be the constant defined as
\begin{equation}\label{xi}
\xi_0:=\dfrac{(\overline{\sigma}+2)(3-\overline{\sigma})}{3(\overline{\sigma}+3)}\in (0,1),
\end{equation} which only depends on the neighborhood  $\mathcal{U}_F^r$ above. The following lemma plays an important role in the next section. The proof is based on similar versions in \cite{CV01} and \cite{KS17}. 

\begin{lem}[Linking Lemma]\label{lem1}
For every $f\in \mathcal{U}_F^r$, suppose $(B^{\rm s},B^{\rm u})$ is a linked pair. For every $\varepsilon>0$, there exist $\delta$ with $|\delta|<\varepsilon$, related sub-bridges $B^{\rm s}_1, B^{\rm s}_2$ of $B^{\rm s}$ and $B^{\rm u}_1, B^{\rm u}_2$ of $B^{\rm u}$ such that the pair of $\delta$-slid bridges $(B^{\rm s}_i(\delta),B^{\rm u}_i(\delta))$ is $\xi_0$-linked for  $i=1,2$.
\end{lem}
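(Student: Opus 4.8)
The plan is to transplant everything onto the tangency curve and argue in the spirit of \cite{CV01,KS17}. On $L$ (and on each slid curve $L(\delta)$, all parametrized by arc-length) there live two Cantor structures: the s-bridges, whose nested lengths contract at rates lying in $[\underline{\lambda},\overline{\lambda}]$, and the u-bridges, whose nested lengths contract at rates lying in $[\overline{\sigma}^{-1},\underline{\sigma}^{-1}]$; these two structures are carried by the disjoint foliations $f^{2}(\mathcal{F}^{\rm u})$ and $\mathcal{F}^{\rm s}$ respectively, so that a $\delta$-slide displaces them relative to one another by an amount comparable to $\delta$, while by Lemma~\ref{lem3} every length and every intersection length is changed by only $O(|\delta|)$. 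Combined with the ratio bounds of Lemma~\ref{lem4} and Remark~\ref{139}, this gives that each center s-gap has relative length in $[1-2\overline{\lambda},\,1-2\underline{\lambda}]$ and each center u-gap in $[1-2\underline{\sigma}^{-1},\,1-2\overline{\sigma}^{-1}]$; by \eqref{045} and $2<\overline{\sigma}<3$ these numbers are positive, every center u-gap is shorter than each of its two adjacent u-bridges, and a center u-gap of a bridge is shorter than a center s-gap of a comparable bridge. These are the elementary facts that will drive the proof.

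First I would pass to a comparable, linked pair of sub-bridges lying deep inside the overlap. Since $(B^{\rm s},B^{\rm u})$ is linked, $J:=B^{\rm s}\cap B^{\rm u}$ is a nondegenerate closed interval; after an arbitrarily small preliminary slide we may assume $J$ has nonempty interior, and $J$ is then contained in no single gap of either Cantor structure --- otherwise, say if $J$ lay inside an s-gap of $B^{\rm s}$, the interval $B^{\rm u}\supseteq J$ would protrude from that gap and meet an adjacent s-bridge, forcing $J$ to meet it, a contradiction --- so $J$ meets both $\Lambda^{\rm s}_L$ and $\Lambda^{\rm u}_L$. Using $\tau^{\rm s}(\Lambda_f)\tau^{\rm u}(\Lambda_f)>1$ from \eqref{107} together with the Gap Lemma~\ref{gaplemma}, the denseness bound \eqref{117}, and bounded distortion, I would then locate an s-sub-bridge $\widehat{B}^{\rm s}\subseteq B^{\rm s}$ and a u-sub-bridge $\widehat{B}^{\rm u}\subseteq B^{\rm u}$ that are linked, whose lengths are comparable up to a fixed factor, and --- since bridge lengths tend to $0$ with the generation --- for which $|\widehat{B}^{\rm s}|+|\widehat{B}^{\rm u}|<\varepsilon$. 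Set $B^{\rm s}_1,B^{\rm s}_2$ to be the two maximal proper sub-bridges of $\widehat{B}^{\rm s}$ and $B^{\rm u}_1,B^{\rm u}_2$ those of $\widehat{B}^{\rm u}$; by construction $B^{\rm s}_1,B^{\rm s}_2$ are related sub-bridges of $B^{\rm s}$ and $B^{\rm u}_1,B^{\rm u}_2$ related sub-bridges of $B^{\rm u}$, as required.

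Next I would choose the slide. Because $\widehat{B}^{\rm s}$ and $\widehat{B}^{\rm u}$ are linked and $|\widehat{B}^{\rm s}|+|\widehat{B}^{\rm u}|<\varepsilon$, there is $\delta$ with $|\delta|<\varepsilon$ for which the center u-gap of $\widehat{B}^{\rm u}(\delta)$ sits inside the center s-gap of $\widehat{B}^{\rm s}(\delta)$ in the ``balanced'' position --- the one equalizing the two overlap ratios $|B^{\rm s}_i(\delta)\cap B^{\rm u}_i(\delta)|/\min\{|B^{\rm s}_i(\delta)|,|B^{\rm u}_i(\delta)|\}$ for $i=1,2$; that this configuration is attainable is exactly the package of inequalities collected in the first paragraph. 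From the balanced position the combinatorial linking of each pair $(B^{\rm s}_i(\delta),B^{\rm u}_i(\delta))$ is immediate, and a direct optimization of the common overlap ratio over the admissible relative sizes and positions, using the ratio bounds above and Lemma~\ref{lem3} to absorb the $O(|\delta|)$ errors, returns the worst-case value $\xi_0=\dfrac{(\overline{\sigma}+2)(3-\overline{\sigma})}{3(\overline{\sigma}+3)}$, so that both pairs are $\xi_0$-linked. The main obstacle is precisely this last step: running the extremal computation over the worst configuration of relative lengths and relative positions to isolate the exact constant $\xi_0$; arranging in the previous step that the slide needed to reach the balanced position stays below $\varepsilon$ while keeping $\widehat{B}^{\rm s},\widehat{B}^{\rm u}$ comparable and linked is the place where the thickness condition $\tau^{\rm s}\tau^{\rm u}>1$ and the denseness bound \eqref{117} do their work.
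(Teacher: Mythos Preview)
Your overall strategy matches the paper's: use the Gap Lemma and the thickness condition \eqref{107} to locate linked sub-bridges $\widehat{B}^{\rm s}\subset B^{\rm s}$ and $\widehat{B}^{\rm u}\subset B^{\rm u}$ of controlled size, then slide by some $|\delta|<\varepsilon$ to align their center gaps and read off the overlap ratios of the related children. The paper's alignment is ``centers of $\widehat{G}^{\rm s}(\delta)$ and $\widehat{G}^{\rm u}(\delta)$ coincide'', essentially your ``balanced position''.

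However, your case analysis of the aligned configuration is wrong, and this is exactly where $\xi_0$ is produced. You assert as an ``elementary fact'' that the center u-gap of a bridge is shorter than the center s-gap of a comparable bridge, and then build the proof on the assumption $\widehat{G}^{\rm u}(\delta)\subset\widehat{G}^{\rm s}(\delta)$. But the construction only yields $|\widehat{B}^{\rm s}(\delta)|\le|\widehat{B}^{\rm u}(\delta)|<\sigma_0|\widehat{B}^{\rm s}(\delta)|$ with $\sigma_0$ close to $\overline{\sigma}\in(2,3)$; when $|\widehat{B}^{\rm u}(\delta)|$ is near the top of this range, $|\widehat{G}^{\rm u}(\delta)|\approx(1-2\overline{\sigma}^{-1})|\widehat{B}^{\rm u}(\delta)|$ can exceed $|\widehat{G}^{\rm s}(\delta)|$, so both inclusions occur and the paper treats them separately. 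In your scenario $\widehat{G}^{\rm u}(\delta)\subset\widehat{G}^{\rm s}(\delta)$ one has $B^{\rm s}_i(\delta)\subset B^{\rm u}_i(\delta)$ and the overlap ratio is $1$, not $\xi_0$; moreover one must still check, via the thickness product, that $B^{\rm s}_i(\delta)$ is not swallowed by a deeper sub-gap of $B^{\rm u}_i(\delta)$ --- a step you omit. The constant $\xi_0$ actually comes from the \emph{other} case $\widehat{G}^{\rm s}(\delta)\subset\widehat{G}^{\rm u}(\delta)$, where part of each $B^{\rm s}_i(\delta)$ lies in $\widehat{G}^{\rm u}(\delta)$ and one bounds
\[
\frac{|B^{\rm s}_i(\delta)\cap B^{\rm u}_i(\delta)|}{|B^{\rm s}_i(\delta)|}
>\frac{\tfrac{1}{3}\bigl(|\widehat{B}^{\rm s}(\delta)|-|\widehat{G}^{\rm u}(\delta)|\bigr)}{\overline{\lambda}|\widehat{B}^{\rm s}(\delta)|}
>\frac{\sigma_0^{-1}-(1-2\overline{\sigma}^{-1})}{3\overline{\sigma}^{-1}}
>\frac{(\overline{\sigma}+2)(3-\overline{\sigma})}{3(\overline{\sigma}+3)}=\xi_0,
\]
using $\sigma_0<(\overline{\sigma}+3)/2$. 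Your ``direct optimization'' never lands on this computation because you have placed yourself in the wrong case.
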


\begin{proof}
Recall that $B^{\rm s (\rm u)}(\delta)$ is the $\delta$-slid bridge of $B^{\rm s(\rm u)}$. Thus, $B^{\rm s (\rm u)}(0)$ is exactly $B^{\rm s(\rm u)}$. In the following proof, we write $B^{\rm s (\rm u)}(0)$ instead of $B^{\rm s(\rm u)}$ for notation consistence.
Let $c$ be the constant in Lemma \ref{lem3}. Fix an arbitrarily small $\varepsilon>0$. In particular, we can assume that 
\begin{equation}\label{001}
\dfrac{1-c\varepsilon}{(1+c\varepsilon)^2}>\ubar{\lambda} \quad \text{and} \quad \left(\dfrac{1+c\varepsilon}{1-c\varepsilon}\right)^2<\dfrac{1}{\overline{\sigma}-2}.
\end{equation}
These inequalities hold for sufficiently small $\varepsilon>0$ because
$\overline{\lambda}$ and $\overline{\sigma}$ are sufficiently close to $\lambda$ and $\sigma$, respectively, which satisfy $0<\lambda<\dfrac{1}{2}$ and $2<\sigma<3$, see \eqref{lambdasigma} and \eqref{sigma}. Denote
\begin{equation}\label{003}
\lambda_0:=\dfrac{\ubar{\lambda}}{1+c\varepsilon} \quad \text{and}\quad \sigma_0:=\overline{\sigma}\left(\dfrac{1+c\varepsilon}{1-c\varepsilon}\right)^2.
\end{equation}
Note that when $\varepsilon>0$ is taken sufficiently small in advance, these constants $\lambda_0$ and $\sigma_0$ can be made as close to $\ubar{\lambda}$ and $\overline{\sigma}$ as we want. Hence, we have
\begin{equation}\label{006}
0<\lambda_0<\ubar{\lambda}<\overline{\lambda}<1<2<\ubar{\sigma}<\overline{\sigma}<\sigma_0<\dfrac{\overline{\sigma}+3}{2}<3.
\end{equation}
 In addition, by the same reason, we can always assume that
\begin{equation}\label{002}
\lambda_0\left(1+\dfrac{1-c\varepsilon}{1+c\varepsilon}\sigma_0\right)<2.
\end{equation}
To see this, since $\lambda\sigma<1$ and $2<\sigma<3$, we have
\begin{equation}\label{024}
\lambda(1+\sigma)<\dfrac{1+\sigma}{\sigma}<\dfrac{1+3}{2}=2.
\end{equation}
By shrinking $\varepsilon_0$ in \eqref{037} if necessary, we can make $\overline{\lambda}$ and $\ubar{\lambda}$ (resp. $\overline{\sigma}$ and $\ubar{\sigma}$) sufficiently close to $\lambda$ (resp. $\sigma$). As a result of \eqref{024}, we have
\begin{equation}\label{046}
\overline{\lambda}(1+\overline{\sigma})<2.
\end{equation}
Therefore, \eqref{002} follows directly from \eqref{046} together with the smallnesses of $|\sigma_0-\overline{\sigma}|$, $|\lambda_0-\ubar{\lambda}|$ and $\varepsilon$.

Note that $\Lambda^{\rm s}_{f_\delta}$ and $\Lambda^{\rm u}_{f_\delta}$ are almost affine images of $\Lambda^{\rm s}_{L(\delta)}$ and $\Lambda^{\rm u}_{L(\delta)}$ under $\pi_{f^2_{\delta}(\mathcal{F}^{\rm u}_{f_\delta})}$ and $\pi_{\mathcal{F}^{\rm s}_{f_\delta}}$ respectively (see \eqref{138}), if $|B^{\rm s}(\delta)|$ and $|B^{\rm u}(\delta)|$ are sufficiently small, according to \eqref{107}, we have
\begin{equation}\label{068}
\tau^{\rm s}_{f_{\delta}}(\Lambda^{\rm s}_{L(\delta)}\cap B^{\rm s}(\delta))\cdot \tau^{\rm u}_{f_{\delta}}(\Lambda^{\rm s}_{L(\delta)}\cap B^{\rm u}(\delta))>1
\end{equation} 
for every $\delta$ with $|\delta|$ small enough. Thus, by applying Lemma \ref{gaplemma} (Gap Lemma) to $\Lambda^{\rm s}_{L(0)}\cap B^{\rm s}(0)$ and $\Lambda^{\rm u}_{L(0)}\cap B^{\rm u}(0)$, there are sub-bridge $\widehat{B}^{\rm s}(0)$ of $B^{\rm s}(0)$ and sub-bridge $\widehat{B}^{\rm u}(0)$ of $B^{\rm u}(0)$ with lengths 
\[|\widehat{B}^{\rm s}(0)|=:\widehat{b}_{\rm s}\quad \text{and} \quad
|\widehat{B}^{\rm u}(0)|=:\widehat{b}_{\rm u}\] satisfying
\begin{itemize}
\item $\widehat{B}^{\rm s}(0)$ and $\widehat{B}^{\rm u}(0)$ have a common point,
\item $\lambda_0^2\dfrac{\varepsilon}{2}<\widehat{b}_{\rm s}<\lambda_0\dfrac{\varepsilon}{2}$ and
\item $\dfrac{1+c\varepsilon}{1-c\varepsilon}\widehat{b}_{\rm s}\le\widehat{b}_{\rm u}<\dfrac{1-c\varepsilon}{1+c\varepsilon}\sigma_0\widehat{b}_{\rm s}$.
\end{itemize}
Indeed, the second item holds because the interval $[\lambda_0^2 \varepsilon,\lambda_0 \varepsilon]$ contains a contracting fundamental domain with the contracting rate $\lambda_0$ which is stronger than $\ubar{\lambda}$. The third item holds because the interval $\left[\dfrac{1+c\varepsilon}{1-c\varepsilon}\widehat{b}_{\rm s},\dfrac{1-c\varepsilon}{1+c\varepsilon}\sigma_0\widehat{b}_{\rm s}\right]$ contains an expanding fundamental domain with expanding rate $\left(\dfrac{1-c\varepsilon}{1+c\varepsilon}\right)^2\sigma_0$ which is equal to $\overline{\sigma}$.

Notice that by \eqref{002}, we have
\begin{equation}\label{069}
\widehat{b}_{\rm s}+\widehat{b}_{\rm u}<\widehat{b}_{\rm s}\left(1+\dfrac{1-c\varepsilon}{1+c\varepsilon}\sigma_0\right)<\lambda_0\dfrac{\varepsilon}{2}\left(1+\dfrac{1-c\varepsilon}{1+c\varepsilon}\sigma_0\right)<\varepsilon.
\end{equation}
Let us consider 
the $\delta$-slid perturbation 
$f_{\delta}$ of $f$ with $|\delta|<\varepsilon$ such that the center of $\widehat{G}^{\rm s}(\delta)$ coincides with the center of $\widehat{G}^{\rm u}(\delta)$, where $\widehat{G}^{{\rm s(u)}}(\delta)$ is the center gap of $\widehat{B}^{{\rm s(u)}}(\delta)$. Let us denote the two related bridges (from left to right) of $\widehat{B}^{\rm s}(\delta)$ (resp. $\widehat{B}^{\rm u}(\delta)$) by $B^{\rm s}_1(\delta)$ and $B^{\rm s}_2(\delta)$ (resp. $B^{\rm u}_1(\delta)$ and $B^{\rm u}_2(\delta)$). We see that $B^{\rm s}_1(\delta)$ and $B^{\rm s}_2(\delta)$ (resp. $B^{\rm u}_1(\delta)$ and $B^{\rm u}_2(\delta)$) have the same generation because they are related bridges.

The following claim gives us useful information on the size comparison among these bridges and gaps whose proof will be postponed until we finish the proof of the lemma.  

\begin{clm}\label{clm1}
With the notations defined above, the following inequalities hold:
\begin{itemize}
\item[(1)] $\ubar{\lambda}^3\varepsilon/2<|\widehat{B}^{\rm s}(\delta)|<\ubar{\lambda}\varepsilon/2$,
\item[(2)] $|\widehat{B}^{\rm s}(\delta)|\le|\widehat{B}^{\rm u}(\delta)|<\sigma_0|\widehat{B}^{\rm s}(\delta)|$,
\item[(3)] $|\widehat{G}^{\rm u}(\delta)|<|\widehat{B}^{\rm s}(\delta)|$.
\end{itemize}
\end{clm}

Let us continue the proof of Lemma \ref{lem1}. We show that for $i=1,2$, these pairs $(B^{\rm s}_i(\delta),B^{\rm u}_i(\delta))$ are proportional. Indeed, we have
\begin{equation}\label{004}
|B^{\rm s}_i(\delta)|\le\overline{\lambda}|\widehat{B}^{\rm s}(\delta)|\le\overline{\lambda}|\widehat{B}^{\rm u}(\delta)|<\overline{\sigma}^{-1}|\widehat{B}^{\rm u}(\delta)|\le|B^{\rm u}_i(\delta)|,
\end{equation}
where the second inequality follows from item (2) of Claim \ref{clm1} and the third inequality follows from the assumption \eqref{045}. Similarly, we also have
\begin{equation}\label{005}
|B^{\rm s}_i(\delta)|\ge\ubar{\lambda}|\widehat{B}^{\rm s}(\delta)|>\ubar{\lambda}\sigma_0^{-1}|\widehat{B}^{\rm u}(\delta)|>\ubar{\lambda}(a \ubar{\sigma})^{-1}|\widehat{B}^{\rm u}(\delta)|\ge\ubar{\lambda}a^{-1}|B^{\rm u}_i(\delta)|,
\end{equation}
where $a>0$ is a constant independent of $\varepsilon$ satisfying 
\[
\sigma_0=\overline{\sigma}\left(\dfrac{1+c\varepsilon}{1-c\varepsilon}\right)^2<\dfrac{\overline{\sigma}}{\overline{\sigma}-2}<a\ubar{\sigma},\] see \eqref{001} and \eqref{003}. Therefore, \eqref{004} and \eqref{005} together indicate that $(B^{\rm s}_i(\delta),B^{\rm u}_i(\delta))$ are proportional for $i=1,2$. 

In the following, we will show that $(B^{\rm s}_i(\delta),B^{\rm u}_i(\delta))$ are $\xi_0$-linked for $i=1,2$. 
First, we notice that 
\begin{equation}\label{009}
|B^{\rm s}_i(\delta)|\le \overline{\lambda}|\widehat{B}^{\rm s}(\delta)|<\overline{\sigma}^{-1}|\widehat{B}^{\rm u}(\delta)|\le|B^{\rm u}_i(\delta)|,
\end{equation}
which implies that 
\[\min\{|B^{\rm s}_i(\delta)|,|B^{\rm u}_i(\delta)|\}=|B^{\rm s}_i(\delta)|.\]

Two cases will subsequently arise: (a) $\widehat{G}^{\rm s}(\delta)\subset\widehat{G}^{\rm u}(\delta)$ and (b) $\widehat{G}^{\rm s}(\delta)\supset\widehat{G}^{\rm u}(\delta)$. See Figure \ref{fig4}.
\begin{figure}[hbt]
\centering
\scalebox{1}{
\includegraphics[clip]{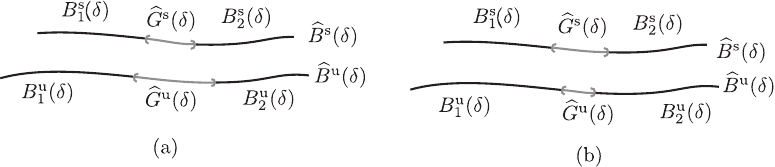}
}
\caption{Two cases in the proof of Lemma~\ref{lem1}} 
\label{fig4}
\end{figure}

In case (a), for $i=1,2$, we have
\begin{align*}
\dfrac{|B^{\rm s}_i(\delta)\cap B^{\rm u}_i(\delta)|}{\min\{|B^{\rm s}_i(\delta)|,|B^{\rm u}_i(\delta)|\}}
&=\dfrac{|B^{\rm s}_i(\delta)\cap B^{\rm u}_i(\delta)|}{|B^{\rm s}_i(\delta)|}>\dfrac{\frac{1}{3}\big(|\widehat{B}^{\rm s}(\delta)|-|\widehat{G}^{\rm u}(\delta)|\big)}{\overline{\lambda}|\widehat{B}^{\rm s}(\delta)|}\\
&>\dfrac{\frac{1}{3}\big(|\widehat{B}^{\rm s}(\delta)|-(1-2\overline{\sigma}^{-1})|\widehat{B}^{\rm u}(\delta)|\big)}{\overline{\sigma}^{-1}|\widehat{B}^{\rm s}(\delta)|}.
\end{align*}
Note that (see item (2) of Claim \ref{clm1}) $$|\widehat{B}^{\rm u}(\delta)|\ge|\widehat{B}^{\rm s}(\delta)|>\sigma_0^{-1}|\widehat{B}^{\rm u}(\delta)|,$$ thus the last term is greater than
\[
\dfrac{\frac{1}{3}\big(\sigma_0^{-1}|\widehat{B}^{\rm u}(\delta)|-(1-2\overline{\sigma}^{-1})|\widehat{B}^{\rm u}(\delta)|\big)}{\overline{\sigma}^{-1}|\widehat{B}^{\rm u}(\delta)|}.\]
Combining this fact with $\sigma_0^{-1}>\dfrac{2}{\overline{\sigma}+3}$ by \eqref{006}, we finally obtain
\[
\dfrac{|B^{\rm s}_i(\delta)\cap B^{\rm u}_i(\delta)|}{\min\{|B^{\rm s}_i(\delta)|,|B^{\rm u}_i(\delta)|\}}
>\dfrac{\sigma_0^{-1}-(1-2\overline{\sigma}^{-1})}{3\overline{\sigma}^{-1}}\\
>\dfrac{(\overline{\sigma}+2)(3-\overline{\sigma})}{3(\overline{\sigma}+3)}=\xi_0.
\]

In case (b), it suffices to show that $B^{\rm s}_i(\delta)$ is not completely contained in any gap of $B^{\rm u}_i(\delta)$ for $i=1,2$. We argue by contradiction. 
Suppose that $B^{\rm s}_1(\delta)$ were 
contained in some gap $\widetilde{G}^{\rm u}(\delta)$ of $B_1^{\rm u}(\delta)$. 
Let $\widetilde{B}^{\rm u}(\delta)$ be one of the adjacent bridges of $\widetilde{G}^{\rm u}(\delta)$. That is, $\widetilde{B}^{\rm u}(\delta)\cap \widetilde{G}^{\rm u}(\delta)\not=\emptyset$ and $\mbox{int}(\widetilde{B}^{\rm u}(\delta))\cap \widetilde{G}^{\rm u}(\delta)=\emptyset$. Thus, 
\[\dfrac{|\widetilde{B}^{\rm u}(\delta)|}{|\widehat{G}^{\rm s}(\delta)|}\cdot \dfrac{|B^{\rm s}_1(\delta)|}{|\widetilde{G}^{\rm u}(\delta)|}<1\cdot 1=1.\]
On the other hand, \eqref{068} gives 
 \[\dfrac{|\widetilde{B}^{\rm u}(\delta)|}{|\widehat{G}^{\rm s}(\delta)|}\cdot \dfrac{|B^{\rm s}_1(\delta)|}{|\widetilde{G}^{\rm u}(\delta)|}=
 \dfrac{|\widetilde{B}^{\rm u}(\delta)|}{|\widetilde{G}^{\rm u}(\delta)|}\cdot \dfrac{|B^{\rm s}_1(\delta)|}{|\widehat{G}^{\rm s}(\delta)|}
   >\tau^{\rm u}_{f_\delta}\tau^{\rm s}_{f_\delta}>1,\]
which gives a contradiction. Similar arguments show that $B^{\rm s}_2(\delta)$ is not completely contained in any gap of $B^{\rm u}_2(\delta)$. Since $B^{\rm s}_i(\delta)\subset B^{\rm u}_i(\delta)$ for $i=1,2$, 
we have
\[\dfrac{|B^{\rm s}_i(\delta)\cap B^{\rm u}_i(\delta)|}{\min\{|B^{\rm s}_i(\delta)|,|B^{\rm u}_i(\delta)|\}}=\dfrac{|B^{\rm s}_i(\delta)|}{|B^{\rm s}_i(\delta)|}=1>\xi_0.\] The proof of Lemma \ref{lem1} is completed now.   
\end{proof}

\begin{proof}[Proof of Claim \ref{clm1}]
(1) By the length estimations in Lemma \ref{lem3} and the choice of $\delta$ under \eqref{069}, we have
\[|\widehat{B}^{\rm s}(\delta)|\le(1+c\varepsilon)|\widehat{B}^{\rm s}(0)|<(1+c\varepsilon)\lambda_0\varepsilon/2=\ubar{\lambda}\varepsilon/2,\]
where the second inequality follows from the choice of $\widehat{b}_{\rm s}$ and the last equality follows from \eqref{003}. Similarly,
\[|\widehat{B}^{\rm s}(\delta)|\ge(1-c\varepsilon)|\widehat{B}^{\rm s}(0)|>(1-c\varepsilon)\lambda^2_0\varepsilon/2>\ubar{\lambda}^3\varepsilon/2,\]
where the last inequality follows from \eqref{001} and \eqref{003}.


(2) In the same manner as in the proof of item (1), we have
\begin{align*}
|\widehat{B}^{\rm u}(\delta)|&\ge(1-c\varepsilon)|\widehat{B}^{\rm u}(0)|\ge(1+c\varepsilon)|\widehat{B}^{\rm s}(0)|\ge|\widehat{B}^{\rm s}(\delta)|,\\
|\widehat{B}^{\rm u}(\delta)|&\le(1+c\varepsilon)|\widehat{B}^{\rm u}(0)|<(1-c\varepsilon)\sigma_0|\widehat{B}^{\rm s}(0)|\le\sigma_0|\widehat{B}^{\rm s}(\delta)|.
\end{align*}

(3) We also 
have
\begin{align*}
|\widehat{B}^{\rm s}(\delta)|
&>\sigma_0^{-1}|\widehat{B}^{\rm u}(\delta)|>(1-2\overline{\sigma}^{-1})|\widehat{B}^{\rm u}(\delta)|\\
&\ge|\widehat{B}^{\rm u}(\delta)|-\Big(|\widehat{B}^{\rm u}_1(\delta)|+|\widehat{B}^{\rm u}_2(\delta)|\Big)=|\widehat{G}^{\rm u}(\delta)|.
\end{align*} 
Here, the first inequality follows from item (2). To obtain the second inequality, it is enough to notice that, according to \eqref{001} and \eqref{003}, we have
\[\sigma_0=\overline{\sigma}\left(\dfrac{1+c\varepsilon}{1-c\varepsilon}\right)^2<\dfrac{\overline{\sigma}}{\overline{\sigma}-2}.\] Now, we complete the proof of Claim \ref{clm1}.
\end{proof}

\begin{remark}\label{010}
In the proof of Lemma \ref{lem1}, we see that the size $\varepsilon$ of the perturbation can be designated in advance as small as we want. Once this $\varepsilon$ is fixed, then the sizes of the sub-bridges $|B_{1,2}^{\rm s}(\delta)|$ and $|B_{1,2}^{\rm u}(\delta)|$ are of order $\varepsilon$. More precisely, according to Claim \ref{clm1}, we have
\begin{gather}
\ubar{\lambda}^4\varepsilon/2<\ubar{\lambda}|\widehat{B}^{\rm s}(\delta)|\le|B_{1,2}^{\rm s}(\delta)|\le\overline{\lambda}|\widehat{B}^{\rm s}(\delta)|<\overline{\lambda}^2\varepsilon/2,\\
|B^{\rm s}_{1,2}(\delta)|<|B^{\rm u}_{1,2}(\delta)|\le\dfrac{1}{\ubar{\sigma}}|\widehat{B}^{\rm u}(\delta)|<\dfrac{\sigma_0}{\ubar{\sigma}}|\widehat{B}^{\rm s}(\delta)|<\ubar{\lambda}\dfrac{\varepsilon\sigma_0}{2\ubar{\sigma}}<a\ubar{\lambda}\dfrac{\varepsilon}{2},
\end{gather}
where $a>0$ is the constant defined under \eqref{005}, which is independent of $\varepsilon$ and satisfies $\sigma_0<a\ubar{\sigma}$.
\end{remark}

\section{Linear Growth Lemma}\label{063}

Let $f$ be an arbitrary element of $\mathcal{U}^r_F$. The objective of this section is to prove the following lemma, which generalizes Linear Growth Lemma in \cite{CV01} to all elements in $\mathcal{U}^r_F$ and establishes the $C^r$-robustness of its conclusions. In particular, this lemma allows us to obtain a sequence of linked pairs of s-bridge and u-bridge by an arbitrarily small slid perturbation such that their generations have linear growth.

Recall that $\xi_0\in (0,1)$ is the constant defined in \eqref{xi}. 

\begin{lem}[Linear Growth Lemma]\label{lem2}
There exist positive constants \[N_{\rm s}=N_{\rm s}(\overline{\lambda},\ubar{\lambda},\overline{\sigma},\ubar{\sigma},\kappa) \quad \text{and}\quad N_{\rm u}=N_{\rm u}(\overline{\lambda},\ubar{\lambda},\overline{\sigma},\ubar{\sigma},\kappa)\] satisfying the following: Let $(B^{\rm s}(0),B^{\rm u}(0))$ be a linked pair of $f$. For every $\varepsilon>0$, there exist $\Delta\in(-\varepsilon, \varepsilon)$, sequences $\{B_k^{\rm s}(\Delta)\}_{k\in \mathbb{N}}$ and $\{B_k^{\rm u}(\Delta)\}_{k\in \mathbb{N}}$ of sub-bridges of $B^{\rm s}(\Delta)$ and $B^{\rm u}(\Delta)$ respectively, such that for every $k\in \mathbb{N}$, the followings hold:
\begin{itemize}
\item[(1)] The pair $(B^{\rm s}_k(\Delta),B^{\rm u}_k(\Delta))$ is $\xi_0/2$-linked. Here, $B_{k}^{{\rm s},{\rm u}}(\Delta)$ is the ${\rm s,u}$-bridges on $L(\Delta)$ with respect to $f_{\Delta}$.
\item[(2)] Let $s_k, u_k$ be the generations of $B_k^{\rm s}, B_k^{\rm u}$, then
\[s_{k+1}-s_k\le N_{\rm s} \quad \text{and}\quad u_{k+1}-u_k\le N_{\rm u}.\]
\end{itemize} 
\end{lem}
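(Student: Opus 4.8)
The plan is to iterate the Linking Lemma (Lemma \ref{lem1}) infinitely many times, recycling the output linked pairs as inputs, while keeping the perturbation sizes summable so that the total slide $\Delta$ stays within $(-\varepsilon,\varepsilon)$ and the diffeomorphisms $f_{\delta_1+\cdots+\delta_m}$ converge. First I would fix, for each step $m\ge 1$, a target perturbation budget $\varepsilon_m$ with $\sum_{m\ge1}\varepsilon_m<\varepsilon$, say $\varepsilon_m=\varepsilon/2^{m+1}$. Starting from the given $\xi_0$-linked (in fact merely linked) pair $(B^{\rm s}(0),B^{\rm u}(0))$, apply Lemma \ref{lem1} with parameter $\varepsilon_1$ to obtain a slide $\delta_1$ with $|\delta_1|<\varepsilon_1$ and two related pairs of sub-bridges, each $\xi_0$-linked with respect to $f_{\delta_1}$. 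I would then apply Lemma \ref{lem1} again, this time to each of these pairs separately, with parameter $\varepsilon_2$, and so on — organizing the output as a binary tree of linked pairs indexed by finite binary words. The branch we keep at the end can be chosen, for instance, as the leftmost branch (or any fixed selection of one child at each node); this yields a nested-in-generation sequence $(B_k^{\rm s}(\Delta),B_k^{\rm u}(\Delta))$, where $\Delta=\sum_{m\ge1}\delta_m^{(k)}$ along the chosen branch.

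The subtlety here — and the point I would be most careful about — is that Lemma \ref{lem1} is stated for a \emph{single} application and the $\xi_0$-linkedness it produces is with respect to the perturbed map $f_{\delta}$, but a later slide $\delta_{m+1}$ perturbs the bridges again. I need the linkedness to be \emph{stable} under the subsequent small slides. This is exactly where Lemma \ref{lem3} enters: the length estimates (1)–(3) there show that $|B^{\rm s}\cap B^{\rm u}|$, $|B^{\rm s}|$, $|B^{\rm u}|$ change by at most a factor $1\pm c|\delta|$ plus an additive error $\kappa|\delta|+O(\delta^2)$. So if $(B^{\rm s},B^{\rm u})$ is $\xi_0$-linked before a slide of size $|\delta|$, after the slide it is at least $\xi_0/2$-linked provided $|\delta|$ is small relative to the bridge lengths. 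By Remark \ref{010}, after the step-$m$ application of Lemma \ref{lem1} the sub-bridges have length of order $\varepsilon_m$; hence I must choose the step-$(m+1)$ parameter $\varepsilon_{m+1}$ not merely summable but also small compared to the current bridge lengths — concretely, $\varepsilon_{m+1}$ should be chosen after step $m$, small enough that $c\cdot(\text{total remaining slide})$ and $\kappa\cdot(\text{total remaining slide})/(\text{current bridge length})$ are bounded by, say, a quantity that degrades $\xi_0$ to at most $\xi_0/2$ over all remaining steps. Since the bridge lengths at step $m$ are bounded below by $\underline{\lambda}^{4m}\varepsilon_1/2^{?}$-type quantities (geometric in $m$ via Lemma \ref{lem4}/Remark \ref{139}) while the remaining budget can be made to decay faster (e.g. doubly geometric), this is achievable; the $\xi_0/2$ floor in conclusion (1) is precisely the slack needed to absorb the cumulative distortion.

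For conclusion (2), the linear growth of generations, I would read it off from Remark \ref{010} combined with Lemma \ref{lem4} (and Remark \ref{139}). When we pass from $(B_k^{\rm s}(\Delta),B_k^{\rm u}(\Delta))$ to $(B_{k+1}^{\rm s}(\Delta),B_{k+1}^{\rm u}(\Delta))$ via one application of Lemma \ref{lem1}, the new s-bridge has length at least $\underline{\lambda}^4\varepsilon_{k+1}/2$ and the old one has length at most $\overline{\lambda}^2\varepsilon_k/2$; more usefully, within a single application the new bridge sits inside the old one and its length is comparable (bounded below by a fixed fraction $\underline\lambda$ of the parent bridge $\widehat B^{\rm s}$, itself a fixed fraction of $B^{\rm s}$). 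Since each extra generation multiplies an s-bridge length by a factor in $[\underline\lambda,\overline\lambda]\subset(0,1)$ by Remark \ref{139}, the number of generations separating two bridges is controlled by the logarithm of their length ratio. As the length ratio $|B_{k+1}^{\rm s}(\Delta)|/|B_k^{\rm s}(\Delta)|$ is bounded below by a universal constant depending only on $\overline\lambda,\underline\lambda,\overline\sigma,\underline\sigma,\kappa$ (tracking through Claim \ref{clm1} and Remark \ref{010}, and noting the additive $\kappa|\delta|$ terms are dominated once $\varepsilon_{k+1}$ is chosen small), we get $s_{k+1}-s_k\le N_{\rm s}$ for a constant $N_{\rm s}$ of that form; the u-side is identical with $\underline\sigma^{-1},\overline\sigma^{-1}$ in place of $\underline\lambda,\overline\lambda$, giving $N_{\rm u}$. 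The main obstacle, to reiterate, is bookkeeping the interaction between "$\xi_0$-linked w.r.t. $f_{\delta}$" at one stage and the perturbation at the next: one must interleave the choice of $\varepsilon_{m+1}$ with the just-produced bridge sizes, and verify via Lemma \ref{lem3} that the degradation of the linking constant telescopes to stay above $\xi_0/2$ — everything else is a routine convergence-of-perturbations argument in $\Diff^r(M)$ plus the length arithmetic already packaged in Lemma \ref{lem4} and Remark \ref{010}.
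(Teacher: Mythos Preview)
Your overall strategy---iterate Lemma \ref{lem1}, control the degradation of the linking constant via Lemma \ref{lem3}, and read off the generation bound from Remark \ref{010} and bounded distortion---is the paper's strategy. One structural difference: you propose following a single branch of the tree, producing a \emph{nested} sequence $B_{k+1}^{\rm s}\subset B_k^{\rm s}$, whereas the paper at each step records one of the two output pairs as $(B_k^{\rm s},B_k^{\rm u})$ and feeds the \emph{other} pair $(\widetilde B_k^{\rm s},\widetilde B_k^{\rm u})$ back into the lemma, so that the $B_k^{\rm s}$ are pairwise \emph{disjoint}. Either choice proves the lemma as stated, but the disjointness is used later (Claim \ref{141}), so the paper's bookkeeping is not gratuitous.

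There is, however, a genuine gap in your handling of the budgets $\varepsilon_m$. You suggest letting the remaining slide decay ``doubly geometrically'' to secure stability of the $\xi_0$-link, and then separately claim that the new bridge is a ``fixed fraction'' of the old one. But by Remark \ref{010} the bridge produced at step $m{+}1$ has length of order $\varepsilon_{m+1}$, \emph{not} of order $|B_m^{\rm s}|$: the intermediate bridge $\widehat B^{\rm s}$ in the proof of Lemma \ref{lem1} satisfies $|\widehat B^{\rm s}|\in[\lambda_0^2\varepsilon/2,\lambda_0\varepsilon/2]$, which is tied to the input parameter $\varepsilon$, not to $|B^{\rm s}|$. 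Hence if $\varepsilon_m$ decays super-geometrically, so do the bridge lengths, the ratio $|B_{k+1}^{\rm s}|/|B_k^{\rm s}|$ is not bounded below, and $s_{k+1}-s_k$ blows up. The resolution, which you have not isolated, is to take $\varepsilon_k$ as a \emph{fixed} proportion of the current bridge length---the paper sets $\varepsilon_k=\dfrac{\overline\lambda\,\xi_0}{4(\kappa+1)}\,|B_k^{\rm s}(\Delta_k)|$---and then verify that this single choice simultaneously (i) makes the degradation of the $t$-th link constant under the $(k{+}1)$-th slide at most $\tfrac{\xi_0}{2}\overline\lambda^{\,k+1-t}$, so the cumulative loss telescopes to below $\xi_0/2$ (this is Claim \ref{clm3}), and (ii) forces $|B_{k+1}^{\rm s}|/|B_k^{\rm s}|\ge \dfrac{\ubar\lambda^{5}\xi_0}{12(\kappa+1)}$, giving (2). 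The point is that stability does not require the slides to be \emph{very} small, only small in a fixed proportion to the current bridge, and that is exactly the regime in which linear growth survives.
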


\begin{proof}
Fix an arbitrarily small $\varepsilon>0$. The following proof will be divided into four steps. The two sequences in the statement will be obtained in Step 3 and item (1) is proved at the end. Item (2) is proved in Step 4.

\vspace{0.2cm}
\noindent {\it Step 1.} In this step, we will prove the following claim.
\begin{clm}\label{clm2}
For every $k\in \mathbb{N}$, there exist $\Delta_k\in \mathbb{R}$ and $\xi_0$-linked pairs $(B^{\rm s}_t(\Delta_k),B^{\rm u}_t(\Delta_k))$ $(t=1,\dots,k)$ in $L(\Delta_k)$ such that $B^{\rm s}_t(\Delta_k)$ $(t=1,\dots,k)$ are pairwise disjoint and $B^{\rm u}_t(\Delta_k)$ $(t=1,\dots,k)$ are pairwise disjoint.
\end{clm}

\begin{proof}[Proof of Claim]
We will construct this sequence of linked pairs by induction. First, for $k=1$, let us find a $\xi_0$-linked pair $(B^{\rm s}_t(\Delta_1),B^{\rm u}_t(\Delta_1))$ in $L(\Delta_1)$. Indeed, since $B^{\rm s}(0)$ and $B^{\rm u}(0)$ are linked, we are allowed to apply Lemma \ref{lem1} to this pair. For $\varepsilon>0$ fixed before, there are $\delta_1$ with $|\delta_1|<\varepsilon/2$, sub-bridges $B_1^{\rm s}(\delta_1), \widetilde{B}_1^{\rm s}(\delta_1)$ of $B^{\rm s}(\delta_1)$, and $B_1^{\rm u}(\delta_1), \widetilde{B}_1^{\rm u}(\delta_1)$ of $B^{\rm u}(\delta_1)$ such that 
\[(B^{\rm s}_1(\delta_1),B^{\rm u}_1(\delta_1))\quad \text{and} \quad (\widetilde{B}^{\rm s}_1(\delta_1),\widetilde{B}^{\rm u}_1(\delta_1))\] are $\xi_0$-linked pairs. Let $\Delta_1:=\delta_1$, then $(B^{\rm s}_1(\Delta_1), B^{\rm u}_1(\Delta_1))$ is exactly the first pair of the sequence that we desired in the statement of Claim \ref{clm2}, while the other pair $(\widetilde{B}^{\rm s}_1(\Delta_1),\widetilde{B}^{\rm u}_1(\Delta_1))$ will be used for constructing the next pair.
Moreover, by Remark \ref{010}, we also have the length estimation 
\begin{equation}\label{011}
|B^{\rm s}_1(\Delta_1)|< \overline{\lambda}^2\varepsilon/2.
\end{equation}

We set $\Delta_k=\delta_1+\cdots +\delta_k$. Suppose by induction that for some $k\ge 1$, we have found $\xi_0$-linked pairs $(B^{\rm s}_k(\Delta_k),B^{\rm u}_k(\Delta_k))$ and $(\widetilde{B}^{\rm s}_k(\Delta_k),\widetilde{B}^{\rm u}_k(\Delta_k))$ such that $B^{\rm s}_t(\Delta_k)$ (resp. $B^{\rm u}_t(\Delta_k)$) $(t=1,\dots,k)$ are pairwise disjoint. Now, to prove the claim, it remains to find $\Delta_{k+1}\in \mathbb{R}$ and $\xi_0$-linked pairs $(B^{\rm s}_t(\Delta_{k+1}),B^{\rm u}_t(\Delta_{k+1}))$ $(t=1,\dots,k+1)$ in $L(\Delta_{k+1})$ such that $B^{\rm s}_t(\Delta_{k+1})$ (resp. $B^{\rm u}_t(\Delta_{k+1})$) $(t=1,\dots,k+1)$ are pairwise disjoint.
For this, let $s_k$ and $u_k$ denote the generations of $B^{\rm s}_k(\Delta_k)$ and $B^{\rm u}_k(\Delta_k)$ respectively. Since $B^{\rm s}_k(\Delta_k)$ and $\widetilde{B}^{\rm s}_k(\Delta_k)$ (resp. $B^{\rm u}_k(\Delta_k)$ and $\widetilde{B}^{\rm u}_k(\Delta_k)$) are related bridges obtained by Lemma \ref{lem1}, they have the same generation (see the argument below \eqref{069}). Next, by applying Lemma \ref{lem1} to $(\widetilde{B}^{\rm s}_k(\Delta_k),\widetilde{B}^{\rm u}_k(\Delta_k))$ for 
\begin{equation}\label{034}
\varepsilon_k:=\dfrac{\overline{\lambda}\xi_0}{4(\kappa+1)}|B_k^{\rm s}(\Delta_k)|,
\end{equation} where $\kappa$ is the constant in Lemma \ref{lem3}, there exist $\delta_{k+1}$ with $|\delta_{k+1}|<\varepsilon_k$ and sub-bridges 
\begin{align*}
B_{k+1}^{\rm s}(\Delta_{k}+\delta_{k+1}),\ \widetilde{B}_{k+1}^{\rm s}(\Delta_{k}+\delta_{k+1}) \quad &\text{of} \ \widetilde{B}^{\rm s}_k(\Delta_k+\delta_{k+1}),\\
B_{k+1}^{\rm u}(\Delta_{k}+\delta_{k+1}),\  \widetilde{B}_{k+1}^{\rm u}(\Delta_{k}+\delta_{k+1}) \quad &\text{of}\ \widetilde{B}^{\rm u}_k(\Delta_k+\delta_{k+1})
\end{align*}
of generations $s_{k+1}$ and $u_{k+1}$ respectively such that 
\[
(B^{\rm s}_{k+1}(\Delta_{k+1}),B^{\rm u}_{k+1}(\Delta_{k+1}))\quad \text{and}\quad (\widetilde{B}^{\rm s}_{k+1}(\Delta_{k+1}),\widetilde{B}^{\rm u}_{k+1}(\Delta_{k+1}))\]
are $\xi_0$-linked pairs, 
where we define 
$$\Delta_{k+1}:=\Delta_k+\delta_{k+1}.$$ 
Moreover, we see that according to the above process, sub-bridges $B_t^{\rm s}(\Delta_{k+1})$ and $B_t^{\rm u}(\Delta_{k+1})$ are also well-defined for every $t=1,2,\dots, k$. Indeed, we have
\begin{align*}
B_t^{\rm s}(\Delta_{k+1})&:=B_t^{\rm s}(\Delta_t+\delta_{t+1}+\cdots +\delta_{k+1}),  
\\
B_t^{\rm u}(\Delta_{k+1})&:=B_t^{\rm u}(\Delta_t+\delta_{t+1}+\cdots +\delta_{k+1}).
\end{align*}
Notice that $B^{{\rm s({\rm u})}}_{k+1}(\Delta_{k+1})$ is contained inside $\widetilde{B}^{{\rm s}(\rm u)}_k(\Delta_{k+1})$ which is disjoint from $B^{{\rm s}(\rm u)}_k(\Delta_{k+1})$. Thus, $B^{\rm s}_t(\Delta_{k+1})$ (resp. $B^{\rm u}_t(\Delta_{k+1})$) $(t=1,\dots,k+1)$ are pairwise disjoint. 

In addition, by applying Lemma \ref{lem3} finite many times, we have the following estimation which will be useful in Step 3.
\begin{equation}\label{014}
\begin{split}
|B^{\rm s}_{k+1}(\Delta_{k+1})|
&\le \overline{\lambda}^{s_{k+1}-s_k}|B^{\rm s}_{k}(\Delta_{k+1})|\\
&\le \overline{\lambda}^{s_{k+1}-s_k}|B^{\rm s}_{k}(\Delta_{k})|(1+c\delta_{k+1})\\
&\le \cdots \le \overline{\lambda}^{s_{k+1}-s_1}|B^{\rm s}_1(\Delta_1)|\prod_{i=2}^{k+1}(1+c\delta_{i}).
\end{split}
\end{equation}
This completes the proof of Claim \ref{clm2}. 
\end{proof}

\vspace{0.2cm}
\noindent {\it Step 2.}
For every $k\in\mathbb{N}$, let us denote \[\xi_k:=\xi_0\left(1-\dfrac{1}{2}\sum_{i=1}^k\overline{\lambda}^i\right).\] It is clear that $\xi_k>\xi_0/2$ for every $k\in\mathbb{N}$. In this step, we will prove the following claim. 
\begin{clm}\label{clm3}
For every $k\in \mathbb{N}$, the pair $(B^{\rm s}_t(\Delta_k),B^{\rm u}_t(\Delta_k))$ are $\xi_{k-t}$-linked for every $t=1,2,\dots,k$.
\end{clm}

\begin{proof}[Proof of Claim]
The proof of Claim \ref{clm3} will be given by induction on $k$. When $k=1$, the only case we need to consider is $t=1$. The conclusion follows directly from Claim \ref{clm2}. Suppose the conclusion of Claim \ref{clm3} holds for $k$, in the following, we will show that $(B^{\rm s}_t(\Delta_{k+1}),B^{\rm u}_t(\Delta_{k+1}))$ are $\xi_{k+1-t}$-linked for every $t=1,2,\dots,k+1$. 

When $t=k+1$, by the construction of $(B^{\rm s}_{k+1}(\Delta_{k+1}), B^{\rm u}_{k+1}(\Delta_{k+1}))$ in Step 1, we are done, since this pair is $\xi_0$-linked. Thus, it suffices to set $t\in \{1,2,\dots,k\}$. Then, we have
\begin{align*}
|B^{\rm s}_t(\Delta_{k+1})\cap B^{\rm u}_t(\Delta_{k+1})|
&\ge (1-c\delta_{k+1})|B^{\rm s}_t(\Delta_{k})\cap B^{\rm u}_t(\Delta_{k})|-\kappa\delta_{k+1}\\
&\ge (1-c\delta_{k+1}) \xi_{k-t}|B^{\rm s}_t(\Delta_k)|-\kappa\delta_{k+1}\\
&\ge (1-c\delta_{k+1}) \xi_{k-t}\dfrac{|B^{\rm s}_t(\Delta_{k+1})|}{1+c\delta_{k+1}}-\kappa\delta_{k+1}\\
&=\xi_{k-t}|B^{\rm s}_t(\Delta_{k+1})|-\delta_{k+1}\big(\kappa+2c\xi_{k-t}|B^{\rm s}_t(\Delta_{k+1})|+O(\delta_{k+1})\big).
\end{align*} 
Here, we have applied Lemma \ref{lem3} in the first and third inequalities, while the second inequality is obtained according to the induction hypothesis. If $\varepsilon$ is fixed sufficiently small in advance, then we are allowed to bound the coefficient of $\delta_{k+1}$ in the last line from above by $(\kappa+1)$. Indeed, \eqref{034} gives that
$|\delta_{k+1}|<\varepsilon_k=O(|B^{\rm s}_k(\Delta_k)|)$.
Moreover, both of $|B^{\rm s}_t(\Delta_{k+1})|$ and $|B^{\rm s}_k(\Delta_k)|$ are bounded from above by $|B^{\rm s }_1(\Delta_1)|=O(\varepsilon)$ as $\varepsilon$ tends to zero. Thus,
\[\kappa+2c\xi_{k-t}|B^{\rm s}_t(\Delta_{k+1})|+O(\delta_{k+1})<\kappa+1\] for every $\varepsilon>0$ small enough.

Hence it follows that
\begin{equation}\label{008}
\begin{split}
|B^{\rm s}_t(\Delta_{k+1})\cap B^{\rm u}_t(\Delta_{k+1})|
&\ge \xi_{k-t}|B^{\rm s}_t(\Delta_{k+1})|-(\kappa+1)\delta_{k+1}\\
&=\left(\xi_{k-t}-\dfrac{(\kappa+1)\delta_{k+1}}{|B^{\rm s}_t(\Delta_{k+1})|}\right) |B^{\rm s}_t(\Delta_{k+1})|.
\end{split}
\end{equation} 
By bounded distortion property (see Lemma \ref{lem4} and Remark \ref{139}), we have
\[\dfrac{|B^{\rm s}_k(\Delta_{k+1})|}{|B^{\rm s}_{t}(\Delta_{k+1})|}\le\overline{\lambda}^{s_k-s_{t}},\] because $s_k$ is the generation of $B^{\rm s}_k(\Delta_{k+1})$. Thus, by recalling \eqref{034} and noticing that $1-c|\delta_{k+1}|>1/2$, we have 
\begin{equation}\label{007}
\begin{split}
\dfrac{(\kappa+1)\delta_{k+1}}{|B^{\rm s}_t(\Delta_{k+1})|}
&\le (\kappa+1) \dfrac{\overline{\lambda}\xi_0}{4(\kappa+1)} \dfrac{|B^{\rm s}_k(\Delta_{k})|}{|B^{\rm s}_t(\Delta_{k+1})|} 
\le \dfrac{\bar{\lambda}\xi_0}{4(1-c|\delta_{k+1}|)} \dfrac{|B^{\rm s}_k(\Delta_{k})|}{|B^{\rm s}_t(\Delta_{k})|} \\
&\le \dfrac{\overline{\lambda}\xi_0}{2}\overline{\lambda}^{s_k-s_{t}}\le \dfrac{\overline{\lambda}\xi_0}{2} \overline{\lambda}^{k-t}=\dfrac{\xi_0}{2}\overline{\lambda}^{k+1-t}.
\end{split}
\end{equation} 
In the last inequality, we used the obvious relation that 
\[s_k-s_t\ge k-t.\]
Therefore, by substituting \eqref{007} into \eqref{008}, and by recalling that $|B^{\rm s}_t(\Delta_{k+1})|\le|B^{\rm u}_t(\Delta_{k+1})|$ according to the proof of Lemma \ref{lem1} (see \eqref{009}), we get
\begin{align*}
\dfrac{|B^{\rm s}_t(\Delta_{k+1})\cap B^{\rm u}_t(\Delta_{k+1})|}{|B^{\rm s}_t(\Delta_{k+1})|}
&\ge\xi_{k-t}-\dfrac{\xi_0}{2}\overline{\lambda}^{k+1-t}\\
&=\xi_0\left(1-\dfrac{1}{2}\sum_{i=1}^{k-t}\overline{\lambda}^i\right)-\dfrac{\xi_0}{2}\overline{\lambda}^{k+1-t}\\
&=\xi_{k+1-t}.
\end{align*}
This completes the proof of Claim \ref{clm3}.
\end{proof}

\vspace{0.2cm}
\noindent {\it Step 3.} In this step, we will finish the construction of $(B^{\rm s}_k(\Delta),B^{\rm u}_k(\Delta))_{k\in \mathbb{N}}$ for a uniform constant $\Delta$ which is independent of $k$.

For every integer $k\ge 2$, by recalling the construction of $(B^{\rm s}_t(\Delta_k), B^{\rm u}_k(\Delta_k))$ $(t=1,\dots,k)$ in Step 1, we have the following estimation, see \eqref{014},
\begin{equation}\label{019}
\begin{split}
|B^{\rm s}_k(\Delta_k)|
&\le \overline{\lambda}^{s_k-s_{k-1}}|B^{\rm s}_{k-1}(\Delta_k)|
\le  \overline{\lambda}^{s_k-s_{k-1}}(1+c\delta_k)|B^{\rm s}_{k-1}(\Delta_{k-1})|\\
&\le\cdots \le \overline{\lambda}^{s_k-s_{1}}|B^{\rm s}_{1}(\Delta_{1})|\prod_{i=2}^k(1+c\delta_i)\\
&\le \overline{\lambda}^{k-1}|B^{\rm s}_1(\Delta_1)|\prod_{i=2}^k(1+c\delta_i)
= |B^{\rm s}_1(\Delta_1)|\prod_{i=2}^k [\overline{\lambda}(1+c\delta_i)]\\
&\le |B^{\rm s}_1(\Delta_1)|\left(\dfrac{3}{4}\right)^{k-1},
\end{split}
\end{equation}
where the last inequality holds since $\overline{\lambda}<1/2$ and $1+c\delta_i<3/2$ for a small $\varepsilon$. Notice that \eqref{019} holds for $k=1$ as well.

Now, we are in the position to define
\[\Delta=\lim\limits_{k\to\infty}\Delta_k=\sum_{k=1}^\infty\delta_k.\]
By substituting \eqref{019} into 
\begin{equation}\label{012}
|\delta_{k+1}|
\le \varepsilon_k= \dfrac{\overline{\lambda} \xi_0}{4(\kappa+1)} |B^{\rm s}_k(\Delta_k)|
\end{equation}
and combining it with $\bar{\lambda}<1/2<3/4$ and $\kappa>2$, we obtain
\begin{equation}\label{186}
\begin{split}
|\Delta| \le \sum_{k=1}^\infty|\delta_k|
&=|\delta_1|+\sum_{k=1}^\infty|\delta_{k+1}|\\
&\le \dfrac{\varepsilon}{2}+\sum_{k=1}^\infty\dfrac{\overline{\lambda}\xi_0}{4(\kappa+1)}|B^{\rm s}_1(\Delta_1)|\left(\dfrac{3}{4}\right)^{k-1} \\
&< \dfrac{\varepsilon}{2}+\dfrac{\xi_0}{4(\kappa+1)} |B^{\rm s}_1(\Delta_1)|\sum_{k=1}^\infty\left(\dfrac{3}{4}\right)^{k} \\
&\le  \dfrac{\varepsilon}{2}+ \dfrac{\xi_0}{4}|B^{\rm s}_1(\Delta_1)|\le \varepsilon.
\end{split}
\end{equation}
Here, we have applied \eqref{011} in the last inequality. The above estimation \eqref{186} shows that $\{B^{\rm s}_k(\Delta_l)\}_{l\in\mathbb{N}}$ and $\{B^{\rm u}_k(\Delta_l)\}_{l\in\mathbb{N}}$ are Cauchy sequences of compact sets 
with respect to the Hausdorff metric. Let us explain the reason for $\{B^{\rm u}_k(\Delta_l)\}_{l\in\mathbb{N}}$ and the same reason holds for $\{B^{\rm s}_k(\Delta_l)\}_{l\in\mathbb{N}}$. For every $l\in\mathbb{N}$, we denote the left and right endpoints of $B^{\rm u}_k(\Delta_l)$ by $\boldsymbol{a}_l$ and $\boldsymbol{b}_l$, respectively. For the proof, it suffices to show that $\{\boldsymbol{a}_l\}_{l\in\mathbb{N}}$ and $\{\boldsymbol{b}_l\}_{l\in\mathbb{N}}$ are Cauchy sequences. Note that
\[B^{\rm u}_k(\Delta_l)\subset L(\Delta_l) \quad (l\in\mathbb{N})\quad\mbox{and} \quad L(\Delta_l)\to L(\Delta) \quad (l\to \infty).\]
There exists a constant $C$ independent of $k$ and $l$ such that 
\[\mbox{dist}(\boldsymbol{a}_N,\boldsymbol{a}_{N+1})\le C|\Delta_{N+1}-\Delta_{N}|=C|\delta_{N+1}|.\]
It follows that
\[\mbox{dist}(\boldsymbol{a}_N,\boldsymbol{a}_{N+p})
\le\sum_{i=0}^{p-1}\mbox{dist}(\boldsymbol{a}_{N+i},\boldsymbol{a}_{N+i+1})
\le C\sum_{i=1}^{p}|\delta_{N+i}|.\]
Because the series $\sum_{k=1}^\infty|\delta_k|$ converges by \eqref{186}, for an arbitrarily small $\eta>0$, there exists $N\in\mathbb{N}$ large enough such that $\mbox{dist}(\boldsymbol{a}_N,\boldsymbol{a}_{N+p})<\eta$ for every $p\in\mathbb{N}$. Similar argument can be applied to show that $\{\boldsymbol{b}_l\}_{l\in\mathbb{N}}$ is a Cauchy sequence as well.

Thus, we are allowed to define, for every $k\in\mathbb{N}$, that
\[B_k^{\rm s}(\Delta):=\lim\limits_{l\to \infty}B^{\rm s}_k(\Delta_l) \quad \text{and}\quad B_k^{\rm u}(\Delta):=\lim\limits_{l\to \infty}B^{\rm u}_k(\Delta_l).\]
Note that $\xi_k$ has a uniform lower bound $\xi_0/2$ as we mentioned in the beginning of Step 2. By taking the limit, Claim \ref{clm3} implies that $(B^{\rm s}_k(\Delta),B^{\rm u}_k(\Delta))$ is $\xi_0/2$-linked for every $k\in\mathbb{N}$ as we desired in Lemma \ref{lem2} (1).

\vspace{0.2cm}
\noindent {\it Step 4.} 
Finally, let us show (2). Since $B^{\rm s}_{k+1}(\Delta_{k+1})$ is obtained by applying Lemma \ref{lem1} to $(\widetilde{B}^{\rm s}_k(\Delta_k),\widetilde{B}^{\rm u}_k(\Delta_k))$ for $\varepsilon_k$ (see \eqref{034})
, hence Remark \ref{010} gives
\begin{align}\label{015}
 |B^{\rm s}_{k+1}(\Delta_{k+1})|&\ge \ubar{\lambda}^4 \varepsilon_k/2
= \ubar{\lambda}^4 \dfrac{\overline{\lambda}\xi_0}{8(\kappa+1)} |B^{\rm s}_{k}(\Delta_k)|
\ge \dfrac{\ubar{\lambda}^5\xi_0}{8(\kappa+1)} |B^{\rm s}_{k}(\Delta_k)|\nonumber\\
&\ge \dfrac{\ubar{\lambda}^5\xi_0}{8(\kappa+1)(1+c\delta_{k+1})} |B^{\rm s}_{k}(\Delta_{k+1})|\ge \dfrac{\ubar{\lambda}^5\xi_0}{12(\kappa+1)} |B^{\rm s}_{k}(\Delta_{k+1})|,
\end{align}
where in the last inequality, we have used the estimation $1+c\delta_{k+1}<3/2$.
It follows that (refer to Lemma \ref{lem4} and Remark \ref{139})
\[\overline{\lambda}^{s_{k+1}-s_k}\ge \dfrac{|B^{\rm s}_{k+1}(\Delta_{k+1})|}{|B^{\rm s}_{k}(\Delta_{k+1})|}\ge\dfrac{\ubar{\lambda}^5\xi_0}{12(\kappa+1)},\]
which gives
\[s_{k+1}-s_k\le (\log\overline{\lambda})^{-1} \log \dfrac{\ubar{\lambda}^5\xi_0}{12(\kappa+1)} 
=:N_{\rm s}\]
as desired.

Next, we consider the case of u-bridges. Recall that $(B^{\rm s}_{k+1}(\Delta_{k+1}), B^{\rm u}_{k+1}(\Delta_{k+1}))$ 
is the proportional pair obtained by using Lemma \ref{lem1}. It follows that
\begin{align*}
|B^{\rm u}_{k+1}(\Delta_{k+1})|
&\ge  |B^{\rm s}_{k+1}(\Delta_{k+1})|
\ge \dfrac{\ubar{\lambda}^5\xi_0}{12(\kappa+1)} |B^{\rm s}_{k}(\Delta_{k+1})|\\
&\ge \dfrac{\ubar{\lambda}^5\xi_0}{12(\kappa+1)} (1-c\delta_{k+1})|B^{\rm s}_{k}(\Delta_{k})|\\
&\ge \dfrac{\ubar{\lambda}^5\xi_0}{12(\kappa+1)} (1-c\delta_{k+1}) \ubar{\lambda}a^{-1} |B^{\rm u}_{k}(\Delta_{k})|\\
&\ge \dfrac{\ubar{\lambda}^5\xi_0}{12(\kappa+1)} (1-c\delta_{k+1}) \ubar{\lambda}a^{-1}  \dfrac{|B^{\rm u}_{k}(\Delta_{k+1})|}{(1+c\delta_{k+1})},
\end{align*}
where the first and fourth inequalities follow from \eqref{004} and \eqref{005}, the third and fifth inequalities follow from Lemma \ref{lem3}, and the second inequality is given by \eqref{015}. Therefore, we have the following estimations for the u-bridges as well.
\[(\ubar{\sigma}^{-1})^{u_{k+1}-u_k}\ge \dfrac{|B^{\rm u}_{k+1}(\Delta_{k+1})|}{|B^{\rm u}_{k}(\Delta_{k+1})|}
\ge \dfrac{\ubar{\lambda}^6\xi_0(1-c\delta_{k+1})}{12a(\kappa+1)(1+c\delta_{k+1})}\ge \dfrac{\ubar{\lambda}^6\xi_0}{36a(\kappa+1)}, \]
which gives
\[u_{k+1}-u_k\le (-\log\ubar{\sigma})^{-1}\log \dfrac{\ubar{\lambda}^6\xi_0}{36a(\kappa+1)}=:N_{\rm u}.\]
The proof of Lemma \ref{lem2} is completed now.
\end{proof}

\section{Critical Chain Lemma}\label{064}
The primary goal of this section is to construct an infinite sequence called the critical chain, where each member will serve as a positional marker along the forward orbit of the eventually constructed wandering domain. Before formally giving the construction of the critical chain in Subsection~\ref{ccl}, we need some preparation.

Suppose that $\{a_k\}$ and $\{b_k\}$ are two sequences of positive numbers. We introduce the following notations:
\begin{itemize}
\item $a_k\lesssim b_k$ means that there exists some positive constant $K_1$ independent of $k$ such that $a_k\le K_1 b_k$ for every $k$;
\item $a_k\gtrsim b_k$ means that there exists some positive constant $K_2$ independent of $k$ such that $a_k\ge K_2 b_k$ for every $k$;
\item $a_k\sim b_k$ means that $a_k\lesssim b_k$ and $a_k\gtrsim b_k$. In other words, there exist positive constants $K_1,\ K_2$ such that $K_2\le a_k/b_k\le K_1$ for every $k$.
\end{itemize}

For a sequence of closed intervals $[a_k,b_k]$ on $\mathbb{R}$ and $\rho>0$, we say that $[a_k,b_k]$ are {\it $\rho$-uniformly pairwise disjoint} if $$\left[a_k-\rho(b_k-a_k),\ b_k+\rho(b_k-a_k)\right]$$ are pairwise disjoint for all $k$. 
We say that $[a_k,b_k]$ are {\it uniformly pairwise disjoint} if $[a_k,b_k]$ are $\rho$-uniformly pairwise disjoint for some $\rho>0$. Similar definitions can also be given for sequences of intervals on $C^1$ arcs.

Let $\phi$ be a non-decreasing $C^\infty$ function defined on $\mathbb{R}$ satisfying 
\begin{equation}
\phi(x)=\left\{
\begin{aligned}
&0\quad \mbox{if}\ x\le-1,\\
&1\quad \mbox{if}\ x\ge0.\\
\end{aligned}
\right.
\end{equation}
Given $\rho>0$ and an interval $[a,b]$, let
\[\phi_{\rho,[a,b]}(x):=\phi\left(\dfrac{x-a}{\rho(b-a)}\right)+\phi\left(\dfrac{b-x}{\rho(b-a)}\right)-1.\]
Thus, $\phi_{\rho,[a,b]}$ is a non-negative $C^\infty$ function on $\mathbb{R}$ satisfying
\begin{itemize}
\item $\mbox{supp} (\phi_{\rho,[a,b]})\subset [a-\rho(b-a),\ b+\rho(b-a)]$,
\item $\phi_{\rho,[a,b]}(x)=1$ for every $x\in[a,b]$,
\item $\phi_{\rho,[a,b]}(x)\in[0,1]$ for every $x\in\mathbb{R}$,
\item $\|\phi_{\rho,[a,b]}\|_{C^r}\le(\rho(b-a))^{-r}\|\phi\|_{C^r}$ if $\rho(b-a)\le 1$.
\end{itemize}
Bump functions of this type will be used later for constructing $C^r$-perturbations.

\subsection{An open subset $\mathcal{U}_0$ of $\mathcal{U}^r_F$}\label{183}
In this subsection, we will select an element $F_0$ and a neighborhood $\mathcal{U}_0\subset \mathcal{U}_F^r$ of $F_0$. They are exactly the diffeomorphism and the open set in the statement of Theorem \ref{mainthm}. Let us Recall that $\mathcal{U}_F^r$ is the small neighborhood of $F$  fixed in Section \ref{060}. For every $f\in \mathcal{U}^r_F$, the tangency curve between $\mathcal{F}^{\rm s}$ and $f^2(\mathcal{F}^{\rm u})$ is $L_f$.  
\begin{clm}\label{140}
There exist an element $F_{0}$ of $\mathcal{U}^r_F$  and 
a neighborhood $\mathcal{U}_0\subset \mathcal{U}^r_F$ of $F_{0}$ 
satisfying the following conditions: For every $f\in \mathcal{U}_0$, there is a linked pair $(B^{\rm s}, B^{\rm u})$ on its tangency curve $L_f$.
\end{clm}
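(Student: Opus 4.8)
The plan is to reduce the claim to the existence, $C^r$-robustly in $f$, of a genuine homoclinic tangency lying on the curve $L_f$, that is, of a point $q\in\Lambda^{\rm s}_L\cap\Lambda^{\rm u}_L$ in the interior of $L_f$; the passage from such a $q$ to a linked pair will then be immediate. Indeed, suppose $q\in\Lambda^{\rm s}_L\cap\Lambda^{\rm u}_L$ is interior to $L_f$. Choosing the generations large enough that the $s$-bridge $B^{\rm s}$ of $\Lambda^{\rm s}_L$ containing $q$ and the $u$-bridge $B^{\rm u}$ of $\Lambda^{\rm u}_L$ containing $q$ are both defined on $L_f$ (possible since $q$ is interior), we get $q\in B^{\rm s}\cap B^{\rm u}$, so $B^{\rm s}\cap B^{\rm u}\neq\emptyset$. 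Moreover, by construction every gap of $B^{\rm s}$ is a subinterval whose interior is disjoint from $\Lambda^{\rm s}_L$ (it is the maximal interval between two adjacent sub-bridges, which together carry all of the Cantor mass), whereas $q\in\Lambda^{\rm s}_L$; since $q\in B^{\rm u}$, the arc $B^{\rm u}$ cannot lie in the interior of any gap of $B^{\rm s}$, and symmetrically $B^{\rm s}$ cannot lie in the interior of any gap of $B^{\rm u}$. Hence $(B^{\rm s},B^{\rm u})$ is a linked pair on $L_f$.

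To produce such a $q$, I would argue as follows. By \eqref{107} we have $\tau^{\rm s}(\Lambda_f)\tau^{\rm u}(\Lambda_f)>1$ for every $f\in\mathcal{U}^r_F$; combined with $\tau(\Lambda^{\rm s}_f)=\tau^{\rm s}(\Lambda_f)$, $\tau(\Lambda^{\rm u}_f)=\tau^{\rm u}(\Lambda_f)$ and the fact that the projections $\pi_{f^2(\mathcal{F}^{\rm u})}$ and $\pi_{\mathcal{F}^{\rm s}}$ onto $L_f$ are almost affine on deep bridges (Remark \ref{139}; cf. \eqref{068}), it follows that, restricted to sufficiently deep bridges, the Cantor sets $\Lambda^{\rm s}_L$ and $\Lambda^{\rm u}_L$ on $L_f$ have thicknesses whose product exceeds $1$. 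I then fix the still-free parameter $\mu$ in \eqref{f2} appropriately — equivalently, I replace $F$ by a suitable small $\delta_0$-slid perturbation $F_0=F_{\delta_0}\in\mathcal{U}^r_F$ — so that on $L_{F_0}$ the two Cantor sets $\Lambda^{\rm s}_L$ and $\Lambda^{\rm u}_L$ sit in \emph{linked position}: their convex hulls overlap and neither set is contained in a gap of the other. This is possible because sliding $\mu$ translates the shadow $\pi_{f^2(\mathcal{F}^{\rm u})}(L_f)\subset I^{\rm s}$ of the fold locus across $I^{\rm s}$ relative to the essentially unmoved stable foliation $\mathcal{F}^{\rm s}$, so that for an appropriate value some bridge $Br^{\rm s}(n;\underline{w})$ of $\Lambda^{\rm s}_f$ lies with definite room on both sides inside this shadow while, simultaneously, some bridge $Br^{\rm u}(m;\underline{v})$ of $\Lambda^{\rm u}_f$ lies analogously inside $\pi_{\mathcal{F}^{\rm s}}(L_f)\subset I^{\rm u}$, in overlapping position on $L_f$. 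With $\Lambda^{\rm s}_L,\Lambda^{\rm u}_L$ in linked position and the product of their thicknesses exceeding $1$, the Gap Lemma (Lemma \ref{gaplemma}) excludes both ``containment in a gap'' alternatives and yields $\Lambda^{\rm s}_L\cap\Lambda^{\rm u}_L\neq\emptyset$, with the intersection point $q$ interior to $L_{F_0}$ by construction. This is precisely the Newhouse mechanism producing $C^r$-robust homoclinic tangencies, now run inside the present combinatorial framework; cf. \cite{N79,PT93} and the comment following \eqref{thick1}.

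For robustness I would invoke continuity in the $C^r$ topology, $r\ge2$, of all the ingredients: the foliations $\mathcal{F}^{\rm s}_f,\mathcal{F}^{\rm u}_f$, the tangency curve $L_f$ and the parabolic normal form \eqref{f2'}, the Cantor sets $\Lambda^{\rm s}_f,\Lambda^{\rm u}_f$ together with all their bridges and gaps, and — crucially — the thicknesses $\tau^{\rm s}(\Lambda_f),\tau^{\rm u}(\Lambda_f)$ (classical for $r\ge2$, see \cite{N79,PT93}; cf. Lemma \ref{cv-lem} in the $C^{1+\alpha}$ case). The linked-position configuration arranged at $F_0$ is cut out by finitely many strict inequalities (a fixed bridge $Br^{\rm s}(n;\underline{w})$ lying with positive margin inside the shadow of $L_f$, likewise for $Br^{\rm u}(m;\underline{v})$, together with $\tau^{\rm s}(\Lambda_f)\tau^{\rm u}(\Lambda_f)>1$ on the relevant sub-bridges), hence it persists on a $C^r$-neighborhood $\mathcal{U}_0\subset\mathcal{U}^r_F$ of $F_0$. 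Thus for every $f\in\mathcal{U}_0$ the Gap Lemma still yields a point $q\in\Lambda^{\rm s}_L\cap\Lambda^{\rm u}_L$ interior to $L_f$, and the first paragraph produces the desired linked pair $(B^{\rm s},B^{\rm u})$ on $L_f$. The main obstacle is the middle step: arranging, via the choice of $\mu$, that the two Cantor sets on the tangency curve occupy a genuinely linked position — so that the Gap Lemma returns an intersection rather than one of the degenerate ``inside a gap'' alternatives — with enough margin that this survives $C^r$-perturbation.
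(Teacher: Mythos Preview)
Your strategy is essentially the paper's: use $\tau^{\rm s}\tau^{\rm u}>1$ and the Gap Lemma to locate a point of $\Lambda^{\rm s}_L\cap\Lambda^{\rm u}_L$, pass to bridges containing it, and invoke continuity for robustness. Your first-paragraph reduction (a point $q\in\Lambda^{\rm s}_L\cap\Lambda^{\rm u}_L$ interior to $L_f$ yields a linked pair of bridges through $q$) is correct and is exactly how the paper extracts the final linked pair.

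Where the arguments differ is precisely the step you flag as the obstacle. The paper does not perturb $\mu$ first to force a ``linked position'' and then appeal abstractly to the Gap Lemma; instead it exploits the explicit affine form of $F$ to \emph{exhibit} an initial linked pair directly. Since $L_F$ lies on the $x$-axis, one takes $B^{\rm u}_0=B^{\rm u}(n_0;0\cdots0)$ and $B^{\rm s}_0=B^{\rm s}(m_0;0\cdots0)$ with $n_0,m_0$ chosen minimal so that the bridge lengths exceed $\mu$; then $B^{\rm u}_0$ has left endpoint $-a_{\rm u}$, $B^{\rm s}_0$ has right endpoint $-a_{\rm u}+\mu$, and each bridge protrudes past the other on one side, so $(B^{\rm s}_0,B^{\rm u}_0)$ is linked by inspection with $|B^{\rm s}_0\cap B^{\rm u}_0|=\mu$. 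The Gap Lemma applied to $\Lambda^{\rm s}_{L_F}\cap B^{\rm s}_0$ and $\Lambda^{\rm u}_{L_F}\cap B^{\rm u}_0$ then gives $\bm{x}$, and a small slide of $\mu$ (defining $F_0$) upgrades this to sub-bridges of fixed itineraries $(s;\underline{w}),(u;\underline{z})$ around $\bm{x}$, lying in $\mathrm{Int}(L_{F_0})$, that satisfy the three linking conditions \emph{strictly}. Robustness is then obtained more directly than in your proposal: rather than re-run the Gap Lemma for each $f\in\mathcal{U}_0$, the paper observes that the bridges with these \emph{fixed} itineraries vary continuously with $f$, so the finitely many strict inequalities persist on a neighborhood $\mathcal{U}_0$ of $F_0$.
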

\begin{proof}
First, let us consider the center diffeomorphism $F$ described in Subsection \ref{112}. One easily sees that its tangency curve $L_F$ lies exactly on the $x$-axis $\{y=0\}$. Recall that $p_F=(-a_{\rm u},-a_{\rm s})$ is one of the fixed points of $F$ in Subsection~\ref{112} and $\mu$, $\alpha$, $\beta$, $\gamma$ are the constants associated to $F^2$ in \eqref{f2}. Let us define
\[
n_0:=\min\big\{n\in \mathbb{N}: \ 2a_{\rm u}\sigma^{-(n+1)}\le \mu\big\},\ 
m_0:=\min\big\{m\in \mathbb{N}: \ 2a_{\rm s}\gamma\lambda^{m+1}\le \mu\big\}.
\]
Thus, $B^{\rm u}_0:=B^{\rm u}(n_0; 00\dots 0)\subset\{y=0\}$ has its left boundary at $-a_{\rm u}$ and the right boundary at the right of $-a_{\rm u}+\mu$. Similarly, $B^{\rm s}_0:=B^{\rm s}(m_0; 00\dots 0)\subset\{y=0\}$ has its right boundary at $-a_{\rm u}+\mu$ and the left boundary at the left of $-a_{\rm u}$. It then follows that $|B^{\rm s}_0\cap B^{\rm u}_0|=\mu$ and neither $B^{\rm s}_0$ is contained in the interior of any gap of $B^{\rm u}_0$ nor $B^{\rm u}_0$ is contained in the interior of any gap of $B^{\rm s}_0$. 
Since $\tau^{\rm s}\tau^{\rm u}>1$ by \eqref{thick1}, 
according to Lemma \ref{gaplemma} (Gap Lemma),
we are allowed to find a point $\bm x$ in $(\Lambda^{\rm s}_{L_F}\cap B^{\rm s}_0)\cap(\Lambda^{\rm u}_{L_F}\cap B^{\rm u}_0)$ where $\Lambda^{\rm s}_{L_F}$ and $\Lambda^{\rm u}_{L_F}$ are Cantor sets on $L_F$ defined by \eqref{138}. Note that both $\Lambda^{\rm s}_{L_F}$ and $\Lambda^{\rm u}_{L_F}$ are Cantor sets hence $\bm x$ is not an isolated point of them. Therefore, by perturbing $\mu$ of \eqref{f2} a little bit, precisely, by considering $\mu+c$ instead of $\mu$ for some $c$ with $|c|$ very small, we obtain 
the $c$-slid perturbation of $F$, denoted by $F_0$, such that there exist sub-bridges $B_{F_0}^{\rm s}\subset \mbox{Int}(L_{F_0})$ of $B^{\rm s}_0$ and $B_{F_0}^{\rm u}\subset \mbox{Int}(L_{F_0})$ of $B^{\rm u}_0$ around $\bm x$, satisfying
\begin{itemize}
\item[(i)] $B_{F_0}^{\rm s}$ is not contained in the interior of any gap of $B_{F_0}^{\rm u}$,
\item[(ii)] $B_{F_0}^{\rm u}$ is not contained in the interior of any gap of $B_{F_0}^{\rm s}$,
\item[(iii)] $|B_{F_0}^{\rm s}\cap B_{F_0}^{\rm u}|>0$.
\end{itemize}      
Suppose 
\[
B_{F_0}^{\rm s}=B_{F_0}^{\rm s}(s;\underline{w}), \quad 
B_{F_0}^{\rm u}=B_{F_0}^{\rm u}(u;\underline{z}) 
\]
for some $\underline{w}\in \{0,1\}^{s}$ and $\underline{z}\in \{0,1\}^{u}$. Here, we add the subscript $F_0$ in the notations in order to emphasize that they are the ${\rm s}({\rm u})$-bridges with respect to $F_0$. 

Now, let us take an arbitrary $f$ in a neighborhood $\mathcal{U}_0\subset \mathcal{U}^r_F$ of $F_0$. As long as $\mathcal{U}_0$ is fixed small enough, the above three conditions (i)-(iii) also hold for bridges 
\[B^{\rm s}:=B^{\rm s}(s;\underline{w})\subset L_f, \quad 
B^{\rm u}:=B^{\rm u}(u;\underline{z})\subset L_f\] of every
$f\in \mathcal{U}_0$. We conclude that these bridges are the desired linked pair for $f$, which completes the proof of the claim.
\end{proof}
\begin{remark}\label{rmk7.2}
According to the proof of Claim \ref{140}, it is not hard to see that $\mathcal{U}_0$ can be chosen arbitrarily close to $F$. Indeed, it suffices to select $c$ with $|c|$ sufficiently small in the proof.
\end{remark}
As a consequence of this claim, for every $f\in \mathcal{U}_0$, Lemma \ref{lem2} (Linear Growth Lemma) can be applied to $f$ and this linked pair. Let $\Delta=\Delta(\varepsilon)$ be the constant obtained by Lemma \ref{lem2}. By selecting $\varepsilon>0$ sufficiently small in advance, we can certainly require that $f_{\Delta}$ is still contained in $\mathcal{U}_0$. For notational simplicity, from now on, let us denote the $\Delta$-slid perturbation $f_{\Delta}$ of $f$ by $f$ again. 

Since $f$ satisfies the conclusion of Lemma \ref{lem2}, there exists a sequence of $\xi_0/2$-linked pairs $(B^{\rm s}_k,B^{\rm u}_k)$ with generations $(s_k,u_k)$ respectively. In particular, $s_k$ and $u_k$ satisfy Lemma \ref{lem2} (2) for the constants $N_{\rm s}$ and $N_{\rm u}$. Let us fix a large number $N$. In particular, we assume that $N$ is much larger than $\max\{N_{\rm s}, N_{\rm u}\}$. For every $k\in \mathbb{N}$, since $(B^{\rm s}_k, B^{\rm u}_k)$ is a linked pair, by applying Lemma \ref{gaplemma} (Gap Lemma) to $\Lambda^{\rm s}_{L}\cap B^{\rm s}_k$ and $\Lambda^{\rm u}_{L}\cap B^{\rm u}_k$, there exist linked sub-bridges 
\begin{equation}\label{070}
\widehat{B}^{\rm s}_k:=\widehat{B}^{\rm s}_k(\widehat{s}_k;\widehat{\ubar{w}}^{(k)})\subset B^{\rm s}_k, \quad 
\widehat{B}^{\rm u}_k:=\widehat{B}^{\rm u}_k(\widehat{u}_k;\widehat{\ubar{z}}^{(k)})\subset B^{\rm u}_k  
\end{equation}
with $\widehat{\ubar{w}}^{(k)}\in \{0,1\}^{\widehat{s}_k}$ and $\widehat{\ubar{z}}^{(k)}\in \{0,1\}^{\widehat{u}_k}$,
whose lengths satisfy 
\begin{equation}\label{022}
\ubar{\lambda}^2\cdot\ubar{\lambda}^{kN}\le |\widehat{B}^{\rm s}_k|\le \ubar{\lambda}\cdot\ubar{\lambda}^{kN},\quad 
|\widehat{B}^{\rm s}_k|\le |\widehat{B}^{\rm u}_k|\le \overline{\sigma}|\widehat{B}^{\rm s}_k|.
\end{equation}
It is easy to see that $\widehat{s}_k,\widehat{u}_k\to \infty$ when $k$ tends to infinity. Now, for an arbitrary $\widehat{m}_k\in \mathbb{N}$,  let us define a new itinerary
\begin{equation}\label{020}
\ubar{z}^{(k)}:=\widehat{\ubar{z}}^{(k)}\widehat{\ubar{v}}^{(k)}[\widehat{\ubar{w}}^{(k+1)}]^{-1}
\end{equation}
with length 
\begin{equation}\label{108}
n_k=\widehat{u}_k+\widehat{m}_k+\widehat{s}_{k+1},
\end{equation}
where $\widehat{\ubar{v}}^{(k)}$ is an arbitrary element of $\{0,1\}^{\widehat{m}_k}$. Consider the sub-bridges
\begin{equation}\label{021}
\mathscr{B}^{\rm s}_k:=B^{\rm s}(n_{k-1};[\ubar{z}^{(k-1)}]^{-1})\subset \widehat{B}^{\rm s}_k \quad\text{and}\quad \mathscr{B}^{\rm u}_k:=B^{\rm u}(n_{k};\ubar{z}^{(k)})\subset \widehat{B}^{\rm u}_k.
\end{equation}
Here, the non-consistence of subscript in the definition of $\mathscr{B}^{\rm s}_k$ is caused by the definition of $\ubar{z}^{(k)}$ in  \eqref{020}. Let us remark that although $\widehat{B}^{\rm s}_k$ and $\widehat{B}^{\rm u}_k$ have at least one common point since they are linked, while in general, $\mathscr{B}^{\rm s}_k$ and $\mathscr{B}^{\rm u}_k$ may be disjoint. See Figure \ref{fig5} for a conceptual picture of these bridges defined on the tangency curve $L$. Finally, let $\mathscr{A}^{{\rm s,u}}_k$, $\widehat{A}^{{\rm s,u}}_k$ and $A^{{\rm s,u}}_k$ be the pre-images of $\mathscr{B}^{{\rm s,u}}_k$, $\widehat{B}^{{\rm s,u}}_k$ and $B^{{\rm s,u}}_k$ under $f^2$ respectively, located on $\widetilde{L}$. 
\begin{figure}[hbt]
\centering
\scalebox{0.9}{
\includegraphics[clip]{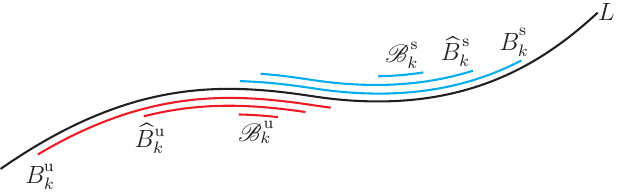}
}
\caption{$\rm s$-bridges and $\rm u$-bridges on $L$.
} 
\label{fig5}
\end{figure}

We have the following two claims.
\begin{clm}\label{141}
The ${\rm s}$-bridges
$A_k^{\rm s}\ (k\in\mathbb{N})$ are uniformly pairwise disjoint.  
\end{clm}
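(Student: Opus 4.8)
The plan is to trace through the inductive construction in Step~1 of the proof of Lemma~\ref{lem2} to extract the combinatorial nesting of the bridges $B^{\rm s}_k=B^{\rm s}_k(\Delta)$, then to convert it into a uniform lower bound for the distance between distinct bridges after projecting to $I^{\rm s}$, using the bounded distortion of $\Lambda^{\rm s}_{f}$ (Lemma~\ref{lem4}), and finally to transport the conclusion back to $\widetilde L$ along the $C^{1}$ foliation $\mathcal F^{\rm u}$.

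First I would record what Step~1 of Lemma~\ref{lem2} produces. For every $j\ge 1$ the bridges $B^{\rm s}_j(\Delta)$ and $\widetilde B^{\rm s}_j(\Delta)$ are the two related (maximal proper) sub-bridges of a single ${\rm s}$-bridge, say $E^{\rm s}_j$, which therefore has generation $s_j-1$; and $B^{\rm s}_{j+1}(\Delta)$ is a sub-bridge of $\widetilde B^{\rm s}_j(\Delta)$, because it is obtained by applying Lemma~\ref{lem1} to a linked pair whose ${\rm s}$-component lies inside $\widetilde B^{\rm s}_j$. All of these are combinatorial (itinerary) incidences, so I would note that they survive the Hausdorff limit $\Delta=\lim_l\Delta_l$ defining $B^{\rm s}_k(\Delta)$ and hence hold for $f_{\Delta}$. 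Iterating $B^{\rm s}_{j+1}(\Delta)\subset\widetilde B^{\rm s}_j(\Delta)\subset E^{\rm s}_j$, for $1\le j<k$ we obtain $B^{\rm s}_k(\Delta)\subset\widetilde B^{\rm s}_j(\Delta)$; since $B^{\rm s}_j(\Delta)$ is the other related sub-bridge of $E^{\rm s}_j$, the two bridges $B^{\rm s}_j(\Delta)$ and $B^{\rm s}_k(\Delta)$ lie on opposite sides of the center gap of $E^{\rm s}_j$.

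Next I would project to $I^{\rm s}$ by the $C^{1}$ submersion $\pi_{f^{2}(\mathcal F^{\rm u})}$ and work with the ${\rm s}$-bridges of $\Lambda^{\rm s}_{f}$. Write $Br^{\rm s}_k$ for the image of $B^{\rm s}_k(\Delta)$ (of generation $s_k$), $Er^{\rm s}_j$ for the image of $E^{\rm s}_j$ (of generation $s_j-1$), and $Ga^{\rm s}_j$ for the center gap of $Er^{\rm s}_j$. By Lemma~\ref{lem4} each of the two generation-$s_j$ sub-bridges of $Er^{\rm s}_j$ has length at most $\overline\lambda\,|Er^{\rm s}_j|$; since $\overline\lambda<1/2$ this forces $|Ga^{\rm s}_j|\ge(1-2\overline\lambda)\,|Er^{\rm s}_j|>0$, whereas $\max\{|Br^{\rm s}_j|,|Br^{\rm s}_k|\}\le\overline\lambda\,|Er^{\rm s}_j|$ because $Br^{\rm s}_k$ lies inside one of those sub-bridges and $Br^{\rm s}_j$ is the other. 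Hence $|Ga^{\rm s}_j|\ge\rho_1\max\{|Br^{\rm s}_j|,|Br^{\rm s}_k|\}$ with $\rho_1:=(1-2\overline\lambda)/\overline\lambda>0$ a constant depending only on $\mathcal U^r_F$; and since $Ga^{\rm s}_j$ lies strictly between $Br^{\rm s}_j$ and $Br^{\rm s}_k$ on $I^{\rm s}$, their distance is at least $\rho_1\max\{|Br^{\rm s}_j|,|Br^{\rm s}_k|\}\ge\tfrac{\rho_1}{2}\big(|Br^{\rm s}_j|+|Br^{\rm s}_k|\big)$. This is precisely the statement that the $Br^{\rm s}_k$ $(k\in\mathbb N)$ are $\rho$-uniformly pairwise disjoint in $I^{\rm s}$ for every $0<\rho<\rho_1/2$.

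Finally I would transport the estimate back. We have $A^{\rm s}_k=f^{-2}(B^{\rm s}_k)=\big(\pi_{\mathcal F^{\rm u}}|_{\widetilde L}\big)^{-1}(Br^{\rm s}_k)$, and all the $Br^{\rm s}_k$ lie in the fixed compact arc $\pi_{f^{2}(\mathcal F^{\rm u})}(B^{\rm s}(0))\subset I^{\rm s}$, on which $\big(\pi_{\mathcal F^{\rm u}}|_{\widetilde L}\big)^{-1}$ is a $C^{1}$ diffeomorphism onto its image with some bounded distortion $C<\infty$. Such a map carries a $\rho$-enlargement of an interval inside a $(C^{-2}\rho)$-enlargement of its image, so the pairwise disjoint sets $\big(\pi_{\mathcal F^{\rm u}}|_{\widetilde L}\big)^{-1}\big((1+\rho)Br^{\rm s}_k\big)$ contain the pairwise disjoint sets $(1+C^{-2}\rho)A^{\rm s}_k$; hence the $A^{\rm s}_k$ are $(C^{-2}\rho)$-uniformly pairwise disjoint on $\widetilde L$, as claimed. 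I expect the only genuinely delicate point to be this first step: unwinding the nesting of the $B^{\rm s}_k(\Delta)$ from the construction of Lemma~\ref{lem2} and checking its stability under the limit $\Delta=\lim_l\Delta_l$; everything afterwards is the bounded-distortion bookkeeping described above.
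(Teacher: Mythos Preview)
Your argument is correct and follows the same overall architecture as the paper's proof: reduce to the images $Br^{\rm s}_k\subset I^{\rm s}$, obtain a uniform gap/bridge ratio there, and push the estimate to $\widetilde L$ via the $C^{1}$ submersion $\pi_{\mathcal F^{\rm u}}|_{\widetilde L}$.

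The difference lies in how the uniform gap/bridge ratio is obtained. You trace the specific nesting produced in Step~1 of Lemma~\ref{lem2}: for $j<k$ the bridge $Br^{\rm s}_k$ sits inside $\widetilde{Br}^{\rm s}_j$, so $Br^{\rm s}_j$ and $Br^{\rm s}_k$ are separated by the center gap of the common parent $Er^{\rm s}_j$ of generation $s_j-1$, and then Lemma~\ref{lem4} gives $|Ga^{\rm s}_j|\ge(1-2\overline\lambda)|Er^{\rm s}_j|\ge\frac{1-2\overline\lambda}{\overline\lambda}\max\{|Br^{\rm s}_j|,|Br^{\rm s}_k|\}$. The paper instead appeals directly to the uniform denseness bound \eqref{117}: for \emph{any} gap $Ga^{\rm s}$ of $\Lambda^{\rm s}_f$ and either of its adjacent bridges one has $|Ga^{\rm s}|>\theta^{-1}|Br^{\rm s}|$, so any two pairwise disjoint bridges of $\Lambda^{\rm s}_f$ are automatically uniformly separated, without ever unwinding the construction of Lemma~\ref{lem2}. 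Your constant $\rho_1=(1-2\overline\lambda)/\overline\lambda$ is essentially the reciprocal of the stable denseness, so the two routes are morally the same computation; the paper's version is just packaged more abstractly and avoids the ``delicate point'' you flag about stability under the limit $\Delta=\lim_l\Delta_l$, since it only uses that the $Br^{\rm s}_k$ are pairwise disjoint bridges of a single Cantor set with bounded denseness.
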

\begin{proof}
First, let us note that $A^{\rm s}_k=f^{-2}(B^{\rm s}_k)$ for every $k$. Since $B^{\rm s}_k$ are obtained by Lemma \ref{lem2}, they are pairwise disjoint (see Step 1 in its proof). Thus, we conclude that $A_k^{\rm s}\ (k\in\mathbb{N})$ are also pairwise disjoint. It remains to show the uniformity of the disjointness. 

For every $k\in\mathbb{N}$, as $B^{\rm s}_k\subset L$ is an ${\rm s}$-bridge of generation $s_k$, say, $B^{\rm s}_k=B^{\rm s}_k(s_k; \underline{w}^{(k)})$ with $\underline{w}^{(k)}\in\{0,1\}^{s_k}$. By \eqref{113} and \eqref{114}, we see that $A^{\rm s}_k=A^{\rm s}_k(s_k; \underline{w}^{(k)})\subset \widetilde{L}$ can also be seen as the pre-image of $Br^{\rm s}_k=Br^{\rm s}_k(s_k; \underline{w}^{(k)})\subset I^{\rm s}$ under $\pi_{\mathcal{F}^{\rm u}}|_{\widetilde{L}}$, where $\pi_{\mathcal{F}^{\rm u}}|_{\widetilde{L}}$ is the restriction of $\pi_{\mathcal{F}^{\rm u}}$ to $\widetilde{L}$, see \eqref{116}.

Since $\pi_{\mathcal{F}^{\rm u}}|_{\widetilde{L}}$ is almost affine, to prove the claim, it suffices to show that $Br^{\rm s}_k\ (k\in\mathbb{N})$ are uniformly pairwise disjoint. Indeed, note that each $Br^{\rm s}_k$ is a bridge of the Cantor set $\Lambda^{\rm s}_f$ defined in \eqref{111}, we only need to show that any gap of $\Lambda^{\rm s}_f$ occupies a relatively large proportion in length compared to the length of its adjacent bridges. To see this, let us take an arbitrary {\rm s}-gap of $\Lambda^{\rm s}_f$, say $Ga^{\rm s}$ (recall the related definitions in Subsection \ref{071}). Suppose $Br^{\rm s}$ is either of its two adjacent bridges. By Definition \ref{defthick} and the choice of $\theta$ in \eqref{117}, and notice that $f$ is contained in $\mathcal{U}_0\subset \mathcal{U}^r_F$, we conclude that 
\[\dfrac{|Br^{\rm s}|}{|Ga^{\rm s}|}<\theta,\]
which immediately yields 
\[|Ga^{\rm s}|>\theta^{-1}|Br^{\rm s}|.\]
Since $\theta$ (hence $\theta^{-1}$) is a positive constant independent of $f$, we complete the proof of the claim. 
\end{proof}





\begin{clm}\label{145}
For every $k\in\mathbb{N}$ and $\ubar{z}^{(k)}$, $n_k$ defined in \eqref{020}-\eqref{108}, each leaf of $\mathcal{F}^{\rm u}$ inside $\mathbb{B}r^{\rm s}(n_k;[\ubar{z}^{(k)}]^{-1})$ intersects $\widetilde{L}$ transversally.
\end{clm}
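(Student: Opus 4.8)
The plan is to reduce the statement to two ingredients: a robust local transversality near the tangency point $(0,-a_{\rm s})$, and the compatibility between the itinerary and foliation-projection descriptions of bridge strips already set up in Section~\ref{092}.

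First I would prove that, on a fixed small neighborhood $U$ of $(0,-a_{\rm s})$, the foliation $\mathcal F^{\rm u}$ is uniformly transverse to $\widetilde L$ for every $f\in\mathcal U^r_F$, hence on $\mathcal U_0$. For the model $F$ the tangency curve $L_F$ lies exactly on $\{y=0\}$ (see the proof of Claim~\ref{140}), and $L_f$ varies $C^1$-continuously with $f$, so after shrinking $\mathcal U^r_F$ the arc $L=L_f$ is $C^1$-close to a horizontal arc. By \eqref{f2'} the derivative $Df^2$ at $(0,-\bar a_{\rm s})$ is $C^0$-close to the linear map sending $\partial_x\mapsto-\bar\alpha\,\partial_y$ and $\partial_y\mapsto\bar\gamma\,\partial_x$, so $Df^{-2}$ carries horizontal directions to nearly vertical ones; therefore $\widetilde L=f^{-2}(L)$ is a $C^1$ arc whose tangent direction on $U$ is uniformly close to $\partial_y$. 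On the other hand, $\mathcal F^{\rm u}$ on $f(S_{0,f})\cap U$ is the $f$-image of the local unstable foliation of $\Lambda_f$ over $S_{0,f}$, whose leaves are $C^1$-close to horizontal, so there its leaves point uniformly close to $\partial_x$. This yields a lower bound $\tau_0>0$, independent of $f$, for the crossing angle, and in particular shows that $\pi_{\mathcal F^{\rm u}}$ restricts to an injection of $\widetilde L\cap U$ onto a sub-arc of $I^{\rm s}$.

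Next I would check that $\widetilde L$ really cuts across $\mathbb{B}r^{\rm s}(n_k;[\ubar{z}^{(k)}]^{-1})$, meeting every leaf of $\mathcal F^{\rm u}$ that lies in it. By \eqref{020} the itinerary $\ubar{z}^{(k)}$ ends with $[\widehat{\underline{w}}^{(k+1)}]^{-1}$, and $\widehat{\underline{w}}^{(k+1)}$, being the itinerary of a sub-bridge of $B^{\rm s}_{k+1}$, which in turn descends (through the linked-pair bridge of Claim~\ref{140}, slid bridges having unchanged itineraries) from the all-zero bridge $B^{\rm s}(m_0;0\cdots0)$, begins with $0\cdots0$; hence $[\ubar{z}^{(k)}]^{-1}$ begins with $0\cdots0$, so $\mathbb{B}r^{\rm s}(n_k;[\ubar{z}^{(k)}]^{-1})\subset\mathbb{B}r^{\rm s}(0)\subset f(S_{0,f})$ and $B^{\rm s}(n_k;[\ubar{z}^{(k)}]^{-1})\subset B^{\rm s}(m_0;0\cdots0)\subset L$, so that its $f^{-2}$-preimage $A^{\rm s}(n_k;[\ubar{z}^{(k)}]^{-1})\subset\widetilde L$ from \eqref{114} is a well-defined sub-arc of $\widetilde L$ lying in $U$. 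Then by \eqref{135} and \eqref{114},
\[
\pi_{\mathcal F^{\rm u}}\bigl(\mathbb{B}r^{\rm s}(n_k;[\ubar{z}^{(k)}]^{-1})\bigr)=Br^{\rm s}(n_k;[\ubar{z}^{(k)}]^{-1})=\pi_{\mathcal F^{\rm u}}\bigl(A^{\rm s}(n_k;[\ubar{z}^{(k)}]^{-1})\bigr),
\]
and since $\pi_{\mathcal F^{\rm u}}$ is the projection along leaves of $\mathcal F^{\rm u}$, any leaf $\ell$ of $\mathcal F^{\rm u}$ meeting the strip satisfies $\ell\cap I^{\rm s}\in Br^{\rm s}(n_k;[\ubar{z}^{(k)}]^{-1})$, hence passes through the (unique, by the first step) point of $A^{\rm s}(n_k;[\ubar{z}^{(k)}]^{-1})\subset\widetilde L$ lying over $\ell\cap I^{\rm s}$; this intersection is transverse by the transversality established above.

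I expect the main obstacle to be not the transversality — which is a robust $C^1$-open condition, and is precisely what makes the conclusion uniform over all of $\mathcal U_0$ rather than merely on a locally dense subset — but the bookkeeping underlying the two projection identities above: one must verify that the itinerary description of a bridge strip coincides with its $\pi_{\mathcal F^{\rm u}}$-fibre description and that this passes correctly to $\widetilde L$ under $f^{-2}$, so that $A^{\rm s}(n_k;[\ubar{z}^{(k)}]^{-1})$ is genuinely contained in $\mathbb{B}r^{\rm s}(n_k;[\ubar{z}^{(k)}]^{-1})$. This is what \eqref{135} and \eqref{114} encode, and confirming it rests on the almost-affinity of the extended projections $\pi_{f^{-2}(\mathcal F^{\rm s})}$ and $\pi_{f^2(\mathcal F^{\rm u})}$ on bridges of large generation (Lemma~\ref{lem4} and Remark~\ref{139}), together with the injectivity of $\pi_{\mathcal F^{\rm u}}|_{\widetilde L}$ from the first step.
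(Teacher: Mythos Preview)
Your argument is correct and its core coincides with the paper's: both identify that $[\ubar z^{(k)}]^{-1}$ begins with $\widehat{\ubar w}^{(k+1)}$, so $\mathbb{B}r^{\rm s}(n_k;[\ubar z^{(k)}]^{-1})$ is a sub-strip of the strip over $\widehat{B}^{\rm s}_{k+1}$, and then invoke \eqref{135} and \eqref{114} to conclude that every $\mathcal F^{\rm u}$-leaf in that strip meets $\widetilde L$.

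The main difference is one of economy. The paper does not argue the geometric transversality from scratch as you do in your first step; it simply observes that $\widehat A^{\rm s}_{k+1}=f^{-2}(\widehat B^{\rm s}_{k+1})\subset\widetilde L$ is already a well-defined ${\rm s}$-bridge on $\widetilde L$ satisfying \eqref{114}, which by definition means $\pi_{\mathcal F^{\rm u}}|_{\widetilde L}$ covers $Br^{\rm s}(\widehat s_{k+1};\widehat{\ubar w}^{(k+1)})$, i.e.\ every leaf of $\mathcal F^{\rm u}$ in the corresponding strip meets $\widetilde L$ (transversally, since $\pi_{\mathcal F^{\rm u}}$ is a $C^1$-submersion). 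Likewise, the paper does not trace $\widehat{\ubar w}^{(k+1)}$ all the way back to the all-zero bridge of Claim~\ref{140}; it stops at $\widehat{\ubar w}^{(k+1)}$ itself, which already suffices because $\widehat B^{\rm s}_{k+1}\subset L$ by construction. Your additional transversality computation and the descent to the $0\cdots0$ itinerary are valid but unnecessary once one uses that the $A^{\rm s}$-bridges were defined precisely by \eqref{114}.
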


\begin{proof}
Let us fix an arbitrary $k\in\mathbb{N}$. On the one hand, 
it follows from the definition of $\widehat{B}^{\rm s}_{k+1}$ in \eqref{070} that 
\[\widehat{B}^{\rm s}_{k+1}=\widehat{B}^{\rm s}_{k+1}(\widehat{s}_{k+1};\widehat{\ubar{w}}^{(k+1)})\subset B^{\rm s}_{k+1}\subset L.\] 
Combining this fact with the definition of $\widehat{A}^{\rm s}_{k+1}$ before Claim \ref{141}, we have 
\[\widehat{A}^{\rm s}_{k+1}=\widehat{A}^{\rm s}_{k+1}(\widehat{s}_{k+1};\widehat{\ubar{w}}^{(k+1)})\subset \widetilde{L},\] 
since $\widetilde{L}$ is the $f^{-2}$-image of $L$ (see Subsection \ref{110}). Therefore, by \eqref{135} and \eqref{114}, each leaf of $\mathcal{F}^{\rm u}$ inside $\mathbb{B}r^{\rm s}_{k+1}(\widehat{s}_{k+1};\widehat{\ubar{w}}^{(k+1)})$ intersects $\widetilde{L}$ transversally. On the other hand, as \eqref{020} gives 
\[
[\ubar{z}^{(k)}]^{-1}=\big[\widehat{\ubar{z}}^{(k)}\widehat{\ubar{v}}^{(k)}[\widehat{\ubar{w}}^{(k+1)}]^{-1}\big]^{-1}=\widehat{\ubar{w}}^{(k+1)}[\widehat{\ubar{v}}^{(k)}]^{-1}[\widehat{\ubar{z}}^{(k)}]^{-1},
\]
we obtain that $\mathbb{B}r^{\rm s}(n_k;[\ubar{z}^{(k)}]^{-1})$ is a sub-bridge stripe of $\mathbb{B}r^{\rm s}_{k+1}(\widehat{s}_{k+1};\widehat{\ubar{w}}^{(k+1)})$. Considering the above two aspects together, the conclusion follows immediately.
\end{proof}
 
\subsection{Creation of the critical chain}\label{ccl}
To give the next lemma, we need some notational preparations. Let $\mathcal{U}_0$ be the open set given by Claim \ref{140} and $f$ an arbitrary element of $\mathcal{U}_0$. With the notations defined in the previous subsection, for every $k\in\mathbb{N}$, let
$\widetilde{L}_k:=f^{n_k}(L\cap \mathbb{B}r^{\rm u}(n_k;\underline{z}^{(k)}))$. As a result of Claim \ref{145}, we can assume that $\widetilde{L}_k$ intersects $\widetilde{L}$ transversely at 
\begin{equation}\label{074}
\bm{q}_k\in \mathscr{A}^{\rm s}_{k+1}\subset \widetilde{L}.
\end{equation} We also define 
\begin{equation}\label{028}
\begin{split}
\bm{x}_k&:=f^{-n_k}(\bm{q}_k)\in \mathscr{B}^{\rm u}_k\subset L,\quad
\bm{y}_k:=f^{2}(\bm{q}_k)\in \mathscr{B}^{\rm s}_{k+1}\subset L, \\
\bm{r}_k&:=f^{-2}(\bm{x}_k)\in \mathscr{A}^{\rm u}_k\subset \widetilde{L}.
\end{split}
\end{equation}
In other words, the following transfer sequence
\[\bm{r}_k \xrightarrow{f^2} \bm{x}_k \xrightarrow{f^{n_k}} \bm{q}_k  \xrightarrow{f^2} \bm{y}_k \]
is well defined for every $k\in\mathbb{N}$. See Figure \ref{fig6}. We need to point out that all these points $(\bm{r}_k, \bm{x}_k, \bm{q}_k, \bm{y}_k)$ certainly depend on $f$.
\begin{figure}[hbt]
\centering
\scalebox{0.94}{
\includegraphics[clip]{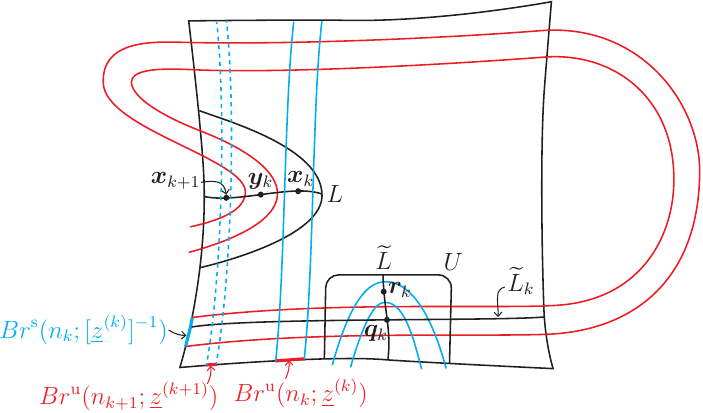}
}
\caption{Locations of $\bm{r}_k$, $\bm{x}_k$, $\bm{q}_k$ and $\bm{y}_k$.} 
\label{fig6}
\end{figure}

The main result of this section is the following so-called Critical Chain Lemma. Let us explain a little more. For every $k\in\mathbb{N}$, we have \[f^{n_k+2}(\bm{x}_k)=\bm{y}_k\] according to the above transfer sequence. Now, if $\bm{y}_k$ happens to be $\bm{x}_{k+1}$ exactly, then we are allowed to act $f^{n_{k+1}+2}$ once again on it, obtaining 
\[f^{n_{k+1}+2}\circ f^{n_{k}+2}(\bm{x}_k)=f^{n_{k+1}+2}(\bm{x}_{k+1}) =\bm{y}_{k+1}.\] 
Moreover, if $\bm{y}_k=\bm{x}_{k+1}$ holds for every $k$, we thus have the following infinite transfer sequence of $\bm{x}_k$, 
called a \emph {critical chain}:
\[\bm{x}_1 \xrightarrow{f^{n_1+2}} 
\bm{x}_{2} \xrightarrow{f^{n_{2}+2}} 
\cdots 
\xrightarrow{f^{n_{k-2}+2}}
\bm{x}_{k-1}  
\xrightarrow{f^{n_{k-1}+2}}
 \bm{x}_{k} 
 \xrightarrow{f^{n_{k}+2}} 
  \cdots\]
which will be very useful when we construct the non-trivial wandering domain. However, it is quite difficult to meet such coincidental conditions for $\bm{y}_k$ and $\bm{x}_{k+1}$ in general. Now, we are in the position to state the following lemma which yields the desired condition.
\begin{lem}[Critical Chain Lemma]\label{lem5} 
For every $\varepsilon>0$, $\widehat{m}_k\in \mathbb{N}$ and $\widehat{\ubar{v}}^{(k)}\in \{0,1\}^{\widehat{m}_k}$, there exists an $\varepsilon$-small $C^r$ perturbation $g$ of $f$ such that, 
for $n_k$, $\ubar{z}^{(k)}$, $\mathscr{B}^{\rm s}_k$, $\mathscr{B}^{\rm u}_k$, $\bm{x}_k$, $\bm{y}_k$ ($k=1,2,\ldots$) defined in \eqref{108}, \eqref{020}, \eqref{021}, \eqref{028}, the followings hold:
\begin{itemize}
\item[(1)] $\bm{y}_{k}=\bm{x}_{k+1}$, hence $g^{n_k+2}(\bm{x}_{k})=\bm{x}_{k+1}$,
\item[(2)] $Dg^{n_k+2}(T_{\bm{x}_{k}}\mathcal{F}^{\rm u})=T_{\bm{x}_{k+1}}\mathcal{F}^{\rm s}$,
\item[(3)] $\widehat{u}_k+\widehat{s}_{k+1}\le Ck$ where $C$ is a constant independent of $k$.
\end{itemize}
\end{lem}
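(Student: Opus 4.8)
The plan is to build $g$ as an infinite composition of localized $C^r$-small perturbations, one for each $k$, each supported near the critical point $\bm q_k$ on the transition arc $\widetilde L_k$, arranged so that the supports are pairwise disjoint and the perturbation sizes are summable. First I would set up the picture carefully: for each $k$ we already have the transfer sequence $\bm r_k\xrightarrow{f^2}\bm x_k\xrightarrow{f^{n_k}}\bm q_k\xrightarrow{f^2}\bm y_k$ from \eqref{028}, with $\bm q_k\in\mathscr A^{\rm s}_{k+1}$ by \eqref{074} and $\bm y_k\in\mathscr B^{\rm s}_{k+1}$, while $\bm x_{k+1}\in\mathscr B^{\rm u}_{k+1}$. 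Both $\bm y_k$ and $\bm x_{k+1}$ lie on the tangency curve $L$ inside the same region, and the whole point of Claim~\ref{145} is that leaves of $\mathcal F^{\rm u}$ inside $\mathbb Br^{\rm s}(n_k;[\ubar z^{(k)}]^{-1})$ cross $\widetilde L$ transversally, so that we can move $\bm q_k$ freely along $\widetilde L_k$ (equivalently, move the image point along $L$ after applying $f^2$) by a perturbation localized near $\bm q_k$ without destroying the bridge structure. The perturbation will be of bump-function type, using the functions $\phi_{\rho,[a,b]}$ introduced at the start of Section~\ref{064}, composed with $f^2$ near $f^{n_k}(\bm x_k)$ so as to translate the critical value; its $C^r$-norm is controlled by $(\rho \cdot \mathrm{length})^{-r}$ times the translation amount, which is why we must first know the translation amount is exponentially small in $k$.

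The key quantitative input is item (3): $\widehat u_k+\widehat s_{k+1}\le Ck$. This follows by chaining the Linear Growth Lemma (Lemma~\ref{lem2}\,(2)) with the Gap-Lemma construction in \eqref{070}--\eqref{022}. Indeed $s_k\le s_1+N_{\rm s}(k-1)$ and $u_k\le u_1+N_{\rm u}(k-1)$ grow linearly; then $\widehat s_k,\widehat u_k$ are obtained from $s_k,u_k$ by the further subdivision controlled by the fixed number $N$, so $\widehat s_k\le s_k+ (\text{fixed})$ and similarly for $\widehat u_k$ — the point being that in \eqref{022} the length $|\widehat B^{\rm s}_k|$ is pinched between $\ubar\lambda^2\ubar\lambda^{kN}$ and $\ubar\lambda\cdot\ubar\lambda^{kN}$, forcing $\widehat s_k = kN + O(1)$ via bounded distortion (Lemma~\ref{lem4}). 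Hence $\widehat u_k+\widehat s_{k+1}\le Ck$ for a constant $C$ depending only on $N$ and $\overline\lambda,\ubar\lambda,\overline\sigma,\ubar\sigma,\kappa$. This is essentially bookkeeping, and I would present it first since it is needed to size the perturbations. Consequently the relevant length scales (the diameters of the bridges $\mathscr B^{\rm s}_k$, $\mathscr B^{\rm u}_k$ and of the transition regions near $\bm q_k$) are of order $\ubar\lambda^{cn_k}$ for the contracting direction and controlled powers of $\ubar\sigma$ for the expanding direction, with $n_k$ itself linear in $k$ up to the free choices $\widehat m_k$.

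Having fixed the scales, I would carry out the construction inductively. Assume $g$ has been defined (agreeing with $f$ outside finitely many small supports) so that $\bm x_1,\dots,\bm x_k$ and $\bm y_1,\dots,\bm y_{k-1}=\bm x_2,\dots,\bm x_k$ are already matched with $Dg^{n_j+2}(T_{\bm x_j}\mathcal F^{\rm u})=T_{\bm x_{j+1}}\mathcal F^{\rm s}$ for $j<k$. To match step $k$, I perturb in a neighborhood of $\bm q_k$ (disjoint from all earlier and, by spacing, later supports — here Claim~\ref{141} on the uniform pairwise disjointness of the $A^{\rm s}_k$, together with the analogous disjointness of the $\mathscr A^{\rm u}_k$, guarantees room). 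The perturbation does two things simultaneously: a translation along $\widetilde L_k$ moving $\bm q_k$ so that its $f^2$-image becomes exactly $\bm x_{k+1}$ (possible by the transversality of Claim~\ref{145} and because $\bm y_k$ and $\bm x_{k+1}$ are both interior points of the relevant bridges), giving (1); and a shear adjusting the derivative so that the unstable tangent line maps to the stable tangent line at $\bm x_{k+1}$, giving (2). Both adjustments are of size comparable to the ambient bridge length $\sim\ubar\lambda^{cn_k}$, and the bump function has support of comparable radius, so its $C^r$-norm is bounded by a constant times $\ubar\lambda^{c(1-r)n_k}\cdot(\text{tiny translation})$ — the translation itself being exponentially smaller because the a-priori mismatch $|\bm y_k-\bm x_{k+1}|$ is already of order $\ubar\lambda^{c n_k}$ (both points sitting in $\mathscr B^{\rm s}_{k+1}$, a bridge of generation $n_k$). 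Choosing $\widehat m_k$ and the earlier $\varepsilon$'s appropriately makes the perturbation sizes summable and each below the budget; the limit $g$ is then $C^r$ and $\varepsilon$-close to $f$. The main obstacle I anticipate is the $C^r$ (as opposed to $C^1$) control in this last step: one must verify that the derivative-matching shear, localized at the exponentially small scale $\ubar\lambda^{cn_k}$, still has $C^r$-norm going to zero, which requires the mismatch in the $r$-jet of $f^{n_k+2}$ along the critical chain to decay faster than $\ubar\lambda^{-c(r-1)n_k}$ blows up — this is where the strong contraction $\overline\lambda<1/2$ and the linear (not faster) growth of $n_k$ from Lemma~\ref{lem2} are essential, and it is the part that needs the most careful estimate rather than a soft argument.
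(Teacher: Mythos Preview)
Your overall architecture is right—localized bump perturbations with disjoint supports, summable $C^r$ norms, translation plus rotation to match both point and tangent—but you have misidentified the two length scales, and this is exactly the ``main obstacle'' you flag at the end without resolving.

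You place the bump function support at the scale of the tiny bridges $\mathscr{B}^{\rm s}_{k+1}$ (or $\widehat B^{\rm s}_{k+1}$), i.e.\ at scale $\ubar\lambda^{cn_k}$, and you say the translation is of the same order. If both scales coincide, the $C^r$ cost is $\ubar\lambda^{cn_k}\cdot(\ubar\lambda^{cn_k})^{-r}=\ubar\lambda^{cn_k(1-r)}$, which blows up for $r\ge 2$; no choice of $\widehat m_k$ (which is \emph{given} in the hypothesis, not at your disposal) rescues this. The paper's mechanism is different: the bump $\Phi_k$ is supported at the scale of the \emph{large} bridges $A^{\rm s}_k=f^{-2}(B^{\rm s}_k)$ coming from the Linear Growth Lemma, whose generations grow like $kN_{\rm s}$ with $N_{\rm s}$ fixed, so $|A^{\rm s}_k|\gtrsim \ubar\lambda^{kN_{\rm s}}$. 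The displacement, on the other hand, is bounded by the size of the \emph{sub}-bridges $\widehat B^{\rm s}_{k}$ chosen via the Gap Lemma in \eqref{022} at scale $\ubar\lambda^{kN}$, where $N$ is the large free parameter introduced just before the lemma (``Let us fix a large number $N$''). The $C^r$ norm of the $k$th perturbation is then $\lesssim \ubar\lambda^{kN}/|A^{\rm s}_k|^r\lesssim \ubar\lambda^{k(N-rN_{\rm s})}$, summable once $N>rN_{\rm s}$ (this is Claim~\ref{clm4}). The uniform pairwise disjointness of the big $A^{\rm s}_k$ (Claim~\ref{141}) is precisely what permits bumps at this generous scale. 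Two smaller corrections: $\bm x_{k+1}$ lies in $\mathscr B^{\rm u}_{k+1}$, not $\mathscr B^{\rm s}_{k+1}$—the bound on $|\bm y_k-\bm x_{k+1}|$ comes from $(\widehat B^{\rm s}_{k+1},\widehat B^{\rm u}_{k+1})$ being linked, not from both points sitting in the same bridge; and the perturbation in the paper acts near $\bm q_{k-1}\in\widetilde L$ (sending it to $\bm r_k\in\widetilde L$) rather than on $\widetilde L_k$ itself.
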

\begin{proof}
For every $k\in\mathbb{N}$, we have constructed $\mathscr{B}^{\rm s}_k$ and $\mathscr{B}^{\rm u}_k$ in \eqref{021}. 
For every $k\ge 2$, recall that $\bm{y}_{k-1}\in \mathscr{B}^{\rm s}_k\subset\widehat{B}^{\rm s}_k$ and $\bm{x}_{k}\in \mathscr{B}^{\rm u}_k\subset\widehat{B}^{\rm u}_k$. Since $(\widehat{B}^{\rm s}_k,\widehat{B}^{\rm u}_k)$ was selected as a linked pair in \eqref{070}, if we denote by $|\bm{y}_{k-1}\bm{x}_k|_L$ the arc-length of the segment on $L$ which connects $\bm{y}_{k-1}$ and $\bm{x}_k$, then we have
\begin{align*}
|\bm{y}_{k-1}\bm{x}_k|_L
&\le |\widehat{B}^{\rm s}_k|+|\widehat{B}^{\rm u}_k|
\le |\widehat{B}^{\rm s}_k|+\overline{\sigma}|\widehat{B}^{\rm s}_k|\\
&\le (1+\overline{\sigma})\ubar{\lambda}^{kN+1}\le 2\ubar{\lambda}^{kN},
\end{align*}
where the second and  third inequalities follow from 
\eqref{022}, and the last inequality holds because $\ubar{\lambda}(1+\bar{\sigma})<2$ by \eqref{046}. Notice that $$\bm{q}_{k-1}=f^{-2}(\bm{y}_{k-1})\in \mathscr{A}^{\rm s}_{k}\subset \widetilde{L}\quad \text{and}\quad \bm{r}_k=f^{-2}(\bm{x}_k)\in \mathscr{A}^{\rm u}_k\subset \widetilde{L}.$$
If we denote by $\bm{\zeta}_k=\bm{\zeta}_k(N)$ the vector which starts at $\bm{q}_{k-1}$ and ends at $\bm{r}_k$, it follows that
\begin{equation}\label{023}
\|\bm{\zeta}_k\|\le |\bm{q}_{k-1}\bm{r}_k|_{\widetilde{L}}\le C_1\ubar{\lambda}^{kN}
\end{equation}
for some constant $C_1$ which only depends on the neighborhood $\mathcal{U}_F^r$. 
\begin{figure}[hbt]
\centering
\scalebox{0.9}{
\includegraphics[clip]{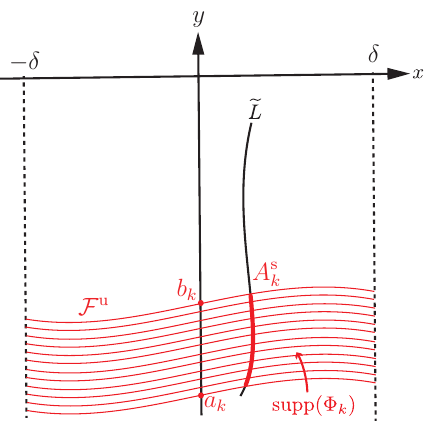}
}
\caption{The image of $\mbox{supp}(\Phi_k)$.} 
\label{fig7}
\end{figure}

Denote by $[a_k,b_k]$ the projection of $A^{\rm s}_k$ to $\{x=0\}$ (i.e. the $y$-axis) along $\mathcal{F}^{\rm u}$. 
Since $A^s_k\ (k=1,2,\ldots)$ are uniformly pairwise disjoint by Claim \ref{141}, we see that $[a_k,b_k]$ are also $\rho$-uniformly pairwise disjoint for some $\rho>0$. We recall that $U$ is the small neighborhood of $(0,-a_{\rm s})$ in $(-2,2)^2$ given in Subsection \ref{112}. Let us assume that the $\pi_x$-image of all points in $U$ is contained in $[-\delta,\delta]$. Let 
\[\chi(x):=\phi_{\frac{1}{4},[-\delta,\delta]}(x)\quad \text{and}\quad \chi_k(y):=\phi_{\frac{1}{10}\rho,[a_k,b_k]}(y)
\]  
be functions defined on the $x$-axis and the $y$-axis respectively.
It follows that $\chi_k$ $(k=2,3,\dots)$ have pairwise disjoint supports. Define
\[\Phi_k(\bm{x}):=\chi(\pi_{x}(\bm{x}))\cdot\chi_k(\pi^0_{\mathcal{F}^{\rm u}}(\bm{x})).\]
Here, $\pi^0_{\mathcal{F}^{\rm u}}$ is the projection to $\{x=0\}$ along leaves of $\mathcal{F}^{\rm u}$. According to the notations and properties of the bump function listed at the beginning of this section, one easily deduces that 
\[\|\Phi_k\|_{C^r}\lesssim \dfrac{1}{|A^{\rm s}_k|^r}\] for every $k=2,3,\dots$. See Figure \ref{fig7} for the image of ${\rm supp}(\Phi_k)$.

Recall that $L$ is the tangency curve between $f^2(\mathcal{F}^{\rm u})$ and $\mathcal{F}^{\rm s}$, therefore, leaves of $\mathcal{F}^{\rm u}$ and leaves of $f^{-2}(\mathcal{F}^{\rm s})$ tangent to each other along $\widetilde{L}=f^{-2}(L)$ to which $\bm{q}_{k-1}$ and $\bm{r}_k$ belong. If we denote by $T_{\bm x}\mathcal{F}^{{\rm s}({\rm u})}$ 
the tangent line of the leaf of $\mathcal{F}^{{\rm s}({\rm u})}$ passing through ${\bm x}$, then we have $T_{\bm{r}_k}(f^{-2}(\mathcal{F}^{\rm s}))=T_{\bm{r}_k}\mathcal{F}^{\rm u}$. Moreover, it follows from \cite[Appendix 1, Theorem 8]{PT93} that $T_{\bm x}\mathcal{F}^{\rm u}$ (indeed, $T_{\bm x}\mathcal{F}^{\rm s}$ also) $C^1$-depends on $\bm{x}$. Combining these facts with \eqref{023} and using the mean value theorem, we see that the angle
\begin{equation*}
\omega_k:=\angle \big( T_{\bm{q}_{k-1}}\mathcal{F}^{\rm u} , T_{\bm{r}_k}(f^{-2}(\mathcal{F}^{\rm s}))\big)
=\angle \big( T_{\bm{q}_{k-1}}\mathcal{F}^{\rm u}, T_{\bm{r}_k}\mathcal{F}^{\rm u}\big)
\end{equation*} satisfies
\begin{equation}\label{025}
|\omega_k|\lesssim \ubar{\lambda}^{kN}.
\end{equation} 
See Figure \ref{fig8}.
\begin{figure}[hbt]
\centering
\scalebox{0.9}{
\includegraphics[clip]{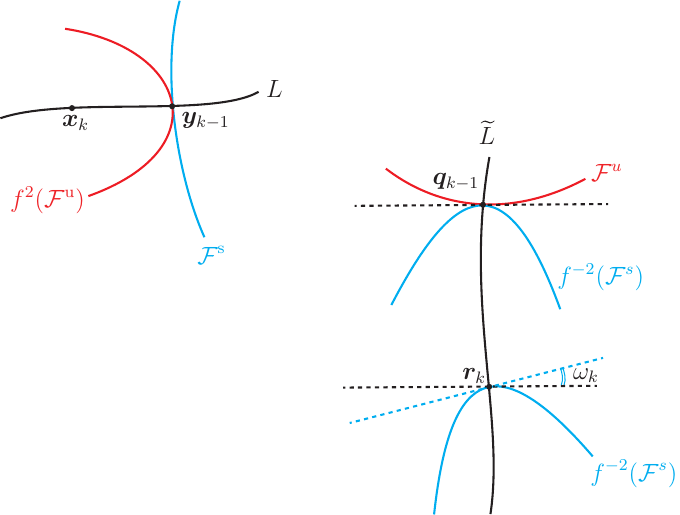}
}
\caption{The angle $\omega_k$.} 
\label{fig8}
\end{figure}

Let $D_k:\mathbb{R}^2\to \mathbb{R}^2$ be the rotation transformation with angle $\omega_k$ at $\bm{q}_{k-1}$, that is, 
\[D_k(\bm{x}):=\bm{q}_{k-1}+\begin{pmatrix}
\cos \omega_k & -\sin \omega_k\\
\sin \omega_k &  \cos \omega_k
\end{pmatrix}(\bm{x}-\bm{q}_{k-1}).\]
Then, if $E:\mathbb{R}^2\to \mathbb{R}^2$ is the identity transformation, we have by \eqref{025} that
\begin{equation}\label{026}
\|D_k-E\|_{C^r}\sim |\omega_k|\lesssim\ubar{\lambda}^{kN}.
\end{equation}
Consider the linear transformation $\xi_k$ on $\mathbb{R}^2$ defined by
\begin{align*}
\xi_k(\bm{x}):&=\bm{\zeta}_k+D_k(\bm{x})= \bm{r}_k-\bm{q}_{k-1}+D_k(\bm{x})\\
&=\bm{r}_k+\begin{pmatrix}
\cos \omega_k & -\sin \omega_k\\
\sin \omega_k &  \cos \omega_k
\end{pmatrix}(\bm{x}-\bm{q}_{k-1}).
\end{align*}
This definition immediately gives
\begin{itemize}
\item $\xi_k(\bm{q}_{k-1})=\bm{r}_k$ and
\item $\xi_k(\bm{q}_{k-1}) T_{\bm{q}_{k-1}}\mathcal{F}^{\rm u}=T_{\bm{r}_k}(f^{-2}(\mathcal{F}^{\rm s}))$.
\end{itemize}
Moreover, we have
\begin{equation}\label{035}
\begin{split}
\|(\xi_k-\mbox{id})|_{U}\|_{C^r}
&\le \max_{\bm{x}\in U}\|(D_k(\bm{x})-\bm{x})+(\bm{r}_k-\bm{q}_{k-1})\|_{C^r}\\
&\le\max_{\bm{x}\in U} \|D_k(\bm{x})-\bm{x}\|_{C^r}+\max_{\bm{x}\in U} \|\bm{r}_k-\bm{q}_{k-1}\|_{C^r}\\
&\le \left\|\begin{pmatrix}
\cos \omega_k & -\sin \omega_k\\
\sin \omega_k &  \cos \omega_k
\end{pmatrix}-E\right\|_{C^r}+\|\bm{r}_k-\bm{q}_{k-1}\|_{C^r}\lesssim \ubar{\lambda}^{kN},
\end{split}
\end{equation}
where the last inequality comes from \eqref{023} and \eqref{026}. Here, we recall that $U$ is the small neighborhood of $(0,-a_{\rm s})$ in $(-2,2)^2$ given in  Subsection \ref{112}.

\begin{clm}\label{clm4}
There exists constant $C_N>0$ satisfying $C_N\to 0$ as $N\to\infty$, such that
\[\sum_{k=2}^\infty \dfrac{\ubar{\lambda}^{kN}}{|A^{\rm s}_k|^r}\le C_N\]for every sufficiently large $N\in\mathbb{N}$.
\end{clm}

\begin{proof}[Proof of Claim]
Indeed, recall that the sequence $B^{\rm s}_k$ is obtained from Lemma \ref{lem2} with generations $s_k$ satisfying $s_k\le s_0+kN_{\rm s}$ for some $s_0$. Thus, for every $k=2,3,\dots$, we have
\[|A^{\rm s}_k|\sim |B^{\rm s}_k|\gtrsim \ubar{\lambda}^{s_k}\gtrsim \ubar{\lambda}^{kN_{\rm s}},\]
which implies that, if $N>rN_{\rm s}$, then there is some constant $C'>0$
 independent of $k$ such that
\begin{align*}
\sum_{k=2}^\infty \dfrac{\ubar{\lambda}^{kN}}{|A^{\rm s}_k|^r}
&\le C' \sum_{k=2}^\infty \dfrac{\ubar{\lambda}^{kN}}{\ubar{\lambda}^{krN_s}}
= C' \sum_{k=2}^\infty \ubar{\lambda}^{k(N-rN_{\rm s})}\\
&=\dfrac{C' \ubar{\lambda}^{2(N-rN_{\rm s})}}{1-\ubar{\lambda}^{N-rN_{\rm s}}}=:C_N.
\end{align*}
Thus we finish the proof of the Claim \ref{clm4} by noticing $C_N\to 0$ as $N\to\infty$.
\end{proof}

Let us continue the proof of the lemma. Let $$\bm{\zeta}=\bm{\zeta}(N):=(\bm{\zeta}_2,\bm{\zeta}_3,\dots ,\bm{\zeta}_k,\ldots)$$ be an infinite sequence of vectors, which is called a {\it perturbation vector sequence}. We claim that the $\bm{\zeta}$-related map sequence
\[\Phi_{\bm{\zeta},l}(\bm{x}):=\bm{x}+\sum_{k=2}^l \Phi_k(\bm{x})(\xi_k(\bm{x})-\bm{x})\]
forms a Cauchy sequence. Indeed, suppose $m$ and $n$ are any pair of positive integers with $m>n$, we thus have, by \eqref{035}, that  
\begin{align*}
\big\|\Phi_{\bm{\zeta},m}-\Phi_{\bm{\zeta},n}\big\|_{C^r}
&=\left\|\sum_{k=n+1}^m \Phi_k (\xi_k-\mbox{id})\right\|_{C^r}
\le\sum_{k=n+1}^m \big\| \Phi_k (\xi_k-\mbox{id})\big\|_{C^r}\\
&\lesssim \sum_{k=n+1}^m \big\|\Phi_k\big\|_{C^r}  \big\|(\xi_k-\mbox{id})|_{U}\big\|_{C^r}
\lesssim  \sum_{k=n+1}^m\dfrac{\ubar{\lambda}^{kN}}{|A^{\rm s}_k|^r}.
\end{align*}
Since the series in Claim \ref{clm4} converges, for any $\varepsilon_0>0$, there is a sufficiently large $N_{0}\in \mathbb{N}$ such that 
if $m>n>N_0$, we have
\[\big\|\Phi_{\bm{\zeta},m}-\Phi_{\bm{\zeta},n}\big\|_{C^r}<\varepsilon_0.\]
As a result, we are allowed to define 
\[\Phi_{\bm{\zeta}}(\bm{x}):=\lim\limits_{l\to\infty}\Phi_{\bm{\zeta},l}(\bm{x})=\bm{x}+\sum_{k=2}^\infty \Phi_k(\bm{x})(\xi_k(\bm{x})-\bm{x}).\]  
By definition, it is not hard to verify that $\Phi_{\bm{\zeta}}$ satisfies 
\begin{itemize}
\item $\Phi_{\bm{\zeta}}(\bm{q}_{k-1})=\bm{r}_k$ and
\item $\Phi_{\bm{\zeta}}(\bm{q}_{k-1}) T_{\bm{q}_{k-1}}\mathcal{F}^{\rm u}=T_{\bm{r}_k}(f^{-2}\mathcal{F}^{\rm s})$
\end{itemize}
for every $k=2,3,\dots$.

\vspace{3mm}
Now, let us finish the proof of Lemma \ref{lem5}. 
Indeed, we notice that 
\begin{align*}
\mbox{dist}_{C^r}(\Phi_{\bm{\zeta}}, \mbox{id})
&=\max_{\bm{x}\in M} \left\|\sum_{k=2}^\infty \Phi_k(\bm{x})(\xi_k(\bm{x})-\bm{x})\right\|_{C^r}\\
&\le \sum_{k=2}^\infty \big\|\Phi_k\big\|_{C^r} \big\|(\xi_k-\mbox{id})|_{U}\big\|_{C^r}
\lesssim\sum_{k=2}^\infty  \dfrac{\ubar{\lambda}^{kN}}{|A^{\rm s}_k|^r}.
\end{align*}
Therefore, given an arbitrarily small $\varepsilon>0$ as in the hypothesis of Lemma \ref{lem5}, according to Claim \ref{clm4} with 
a sufficiently large $N$, it holds that 
$\mbox{dist}_{C^r}(\Phi_{\bm{\zeta}}, \mbox{id})< \varepsilon\|f\|_{C^r}^{-1}$. 
Define 
\[g:=f\circ \Phi_{\bm{\zeta}}:M\to M.\]
 Then, we have
\[\mbox{dist}_{C^r}(g,f)\le\mbox{dist}_{C^r}(\Phi_{\bm{\zeta}},\mathrm{id})\|f\|_{C^r}
<\varepsilon.\]
In other words, we see that $g$ is an $\varepsilon$-small $C^r$-perturbation of $f$. 
Since $\Diff^r(M)$ is open in the space of $C^r$ self-maps of $M$, we conclude that $g$ is also an element of $\Diff^r(M)$. 

It remains to verify that $g$ satisfies the conclusion of Lemma \ref{lem5}. 
For (1), we have
\begin{align*}
g^{n_{k-1}+2}(\bm{x}_{k-1})
&=(f\circ \Phi_{\bm{\zeta}})^2\circ (f\circ \Phi_{\bm{\zeta}})^{n_{k-1}}(\bm{x}_{k-1})\\
&=(f\circ \Phi_{\bm{\zeta}})^2\circ f^{n_{k-1}}(\bm{x}_{k-1})
=(f\circ \Phi_{\bm{\zeta}})^2(\bm{q}_{k-1})\\
&=f\circ \Phi_{\bm{\zeta}}\circ f(\bm{r}_{k})=f^2(\bm{r}_k)=\bm{x}_k.
\end{align*}
For (2), we have
\begin{align*}
Dg^{n_{k-1}+2}(\bm{x}_{k-1})T_{\bm{x}_{k-1}}\mathcal{F}^{\rm u}
&=Dg^2(\bm{q}_{k-1})T_{\bm{q}_{k-1}}\mathcal{F}^{\rm u}\\
&=D(f\circ \Phi_{\bm \zeta}\circ f)(\bm{r}_k)T_{\bm{r}_k}(f^{-2}(\mathcal{F}^{\rm s}))=T_{\bm{x}_k}\mathcal{F}^{\rm s}. 
\end{align*}
For (3), note that we have
\[\ubar{\lambda}^{(k+1)N}\lesssim |\widehat{B}^{\rm s}_{k+1}(\widehat{s}_{k+1};\widehat{\ubar{w}}^{(k+1)})|\lesssim \overline{\lambda}^{\widehat{s}_{k+1}},\]
where the first inequality comes from \eqref{022} and the second inequality comes from \eqref{073}. 
Thus, one can suppose that
\[\widehat{s}_{k+1}\le kN\dfrac{\log \ubar{\lambda}}{\log \overline{\lambda}},\]
if necessary replacing $N$ by a larger integer.
Similarly, we can also deduce that 
\[\widehat{u}_{k}\le kN\dfrac{\log \ubar{\lambda}}{\log \ubar{\sigma}^{-1}}.\]
Therefore, if we take 
\[C>\max\left\{\dfrac{N\log \ubar{\lambda}}{\log \overline{\lambda}},\  \dfrac{N\log \ubar{\lambda}}{\log \ubar{\sigma}^{-1}}\right\}\]
large enough, then $\widehat{s}_{k+1}+\widehat{u}_k<Ck$ holds for every $k$.
We now complete the proof of Lemma \ref{lem5}. 
\end{proof}

\section{Rectangle Lemma}\label{065}
In this section, we will construct a sequence of rectangles. Each rectangle in this sequence is located around ${\bm x}_k$ of the critical chain obtained in the previous section. It will be a non-trivial wandering domain that we aim to construct. To this end, let us begin with some preliminary work.

Choose an arbitrary element $f$ of $\mathcal{U}_0$. With the notations defined in the previous section, let us take 
\begin{equation}\label{109}
\widehat{m}_k=k^2
\end{equation} in Lemma \ref{lem5}. Since $\widehat{u}_k+\widehat{s}_{k+1}<Ck$ for some constant $C>0$ independent of $k$ as indicated in Lemma \ref{lem5} (3), it is not hard to see that $n_k$ defined in \eqref{108} is increasing and 
\[\dfrac{\widehat{u}_k+\widehat{s}_{k+1}}{\widehat{m}_k}=\dfrac{O(k)}{k^2}\to 0\] as $k\to\infty$. Moreover, since 
\[\dfrac{n_{k+1}}{n_k}=
\dfrac{(k+1)^2+O(k+1)}{k^2+O(k)}\to 1\] as $k\to \infty$, for every $\eta>0$, it holds that 
\begin{equation}\label{161}
n_{k+1}<(1+\eta)n_k
\end{equation}
for every sufficiently large $k$. Let us assume that this inequality holds for every $k\in\mathbb{N}$ for notational simplicity (otherwise it is enough to translate the subscript). In addition, we can require that $\eta>0$ is so small that 
\begin{equation}\label{079}
\overline{\lambda}\overline{\sigma}^{\frac{1+2\eta}{1-\eta}}<1
\end{equation} 
holds by $\overline{\lambda}\overline{\sigma}<1$, see \eqref{lambdasigma}. 
\begin{lem}[Rectangle Lemma]\label{lem6}
For every $f\in \mathcal{U}_0$, there exist an arbitrarily small $C^r$ perturbation $g$ of $f$ and a sequence of (topological) rectangles $R_k\ (k=1,2,\dots)$ such that each $R_k$ has $\bm{x}_k$ as its center and satisfies the following properties:
\begin{itemize}
\item[(1)] $\mathrm{diam}(R_k)\to 0$ as $k\to\infty$,
\item[(2)] 
for the rectangle $Q=[-1,1]^2$, 
\[R_k\subset \mathbb{G}a_k^{\rm u}(n_k;\ubar{z}^{(k)})\cap \left(Q \backslash (g(S_{0,g})\cup g(S_{1,g}))\right),\]
 in particular, $\{R_k\}$ are pairwise disjoint,
\item[(3)] $g^{n_k+2}(R_k)\subset R_{k+1}$.
\end{itemize}
\end{lem}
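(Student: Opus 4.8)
The plan is to apply the Critical Chain Lemma once more, now with $\widehat m_k=k^2$, and then to inflate each critical point $\bm x_k$ into a topological rectangle whose two side--lengths satisfy a recursion that closes because $n_k$ grows only quadratically. First I would apply Lemma~\ref{lem5} to $f$ with $\widehat m_k=k^2$ (and, say, $\widehat{\ubar{v}}^{(k)}=(0\cdots 0)$), and take the resulting $C^r$ perturbation $g$ of $f$ as the one in the statement; no further perturbation is needed. By Lemma~\ref{lem5} one then has the critical chain $g^{n_k+2}(\bm x_k)=\bm x_{k+1}$, the tangent alignment $Dg^{n_k+2}(T_{\bm{x}_k}\mathcal F^{\rm u})=T_{\bm{x}_{k+1}}\mathcal F^{\rm s}$, and $\widehat u_k+\widehat s_{k+1}\le Ck$; hence $n_k\uparrow\infty$ with $n_{k+1}<(1+\eta)n_k$ as in \eqref{161}, for an $\eta>0$ so small that \eqref{079} holds.

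Next I would fix, for each $k$, coordinates near $\bm x_k$ with one axis along $T_{\bm{x}_k}\mathcal F^{\rm s}$ (the contracting direction) and the complementary axis transverse, and analyse the return $g^{n_k+2}=g^2\circ g^{n_k}$. Since $\bm x_k$ lies in $B^{\rm u}(n_k;\ubar{z}^{(k)})\subset \mathbb{B}r^{\rm u}(n_k;\ubar{z}^{(k)})$, the factor $g^{n_k}$ is a uniformly hyperbolic excursion through the horseshoe along the itinerary $\ubar{z}^{(k)}$: by the bounded--distortion estimates (Lemma~\ref{lem4} and Remark~\ref{139}) it contracts the $\mathcal F^{\rm s}$--direction by a factor comparable to $\overline{\lambda}^{n_k}$ and expands the transverse one by a factor comparable to $\overline{\sigma}^{n_k}$, and it sends a small rectangle about $\bm x_k$ into $\mathbb{B}r^{\rm s}(n_k;[\ubar{z}^{(k)}]^{-1})$, onto a thin strip about $\bm q_k$ which, by Claim~\ref{145}, crosses $\widetilde L$ transversally. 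The factor $g^2$ near $\bm q_k\in\widetilde L$ is, by \eqref{f2'}, a quadratic fold with critical locus $\widetilde L$, so it carries that transversal strip to a thin parabolic cap about $\bm x_{k+1}=g^2(\bm q_k)\in L$ whose axis is $T_{\bm{x}_{k+1}}\mathcal F^{\rm s}$ --- which is exactly item~(2) of Lemma~\ref{lem5}. Concretely, if $R_k$ is a rectangle about $\bm x_k$ with side--lengths $a_k$ (transverse) and $b_k$ (along $T_{\bm{x}_k}\mathcal F^{\rm s}$), this analysis shows that $g^{n_k+2}(R_k)$ lies in a rectangle about $\bm x_{k+1}$, with sides transverse to and along $T_{\bm{x}_{k+1}}\mathcal F^{\rm s}$, of lengths $\lesssim \overline{\sigma}^{2n_k}a_k^{2}+\overline{\lambda}^{n_k}b_k$ and $\lesssim \overline{\sigma}^{n_k}a_k$, with implied constants depending only on $\mathcal U^r_F$ (through $\bar\alpha,\bar\beta,\bar\gamma$ in \eqref{f2'}).

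This dictates the recursion: define $R_{k+1}$ so that $a_{k+1}\asymp \overline{\sigma}^{2n_k}a_k^{2}+\overline{\lambda}^{n_k}b_k$ and $b_{k+1}\asymp\overline{\sigma}^{n_k}a_k$, which makes (3) hold by construction. Starting from a small $R_1$, I would check a posteriori --- using $\lambda\sigma<1$, $n_{k+1}<(1+\eta)n_k$ and $n_k\uparrow\infty$ --- that the quadratic term $\overline{\sigma}^{2n_k}a_k^{2}$ is eventually dominated by $\overline{\lambda}^{n_k}b_k$, so effectively $a_{k+1}\asymp\overline{\lambda}^{n_k}b_k$ and $b_{k+1}\asymp\overline{\sigma}^{n_k}a_k$; then $b_{k+2}/b_k\asymp\overline{\sigma}^{n_{k+1}}\overline{\lambda}^{n_k}\le(\overline{\sigma}^{1+\eta}\overline{\lambda})^{n_k}\to 0$ by \eqref{079}, and likewise $a_{k+2}/a_k\le(\sigma\lambda)^{n_k}\to 0$, giving (1). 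Because the resulting $a_k$ decay much faster than $\overline{\sigma}^{-n_k}$, after shrinking $R_1$ once more the denseness bound \eqref{117} gives $a_k\lesssim |Ga^{\rm u}(n_k;\ubar{z}^{(k)})|$, so $R_k$ fits inside the gap strip $\mathbb{G}a_k^{\rm u}(n_k;\ubar{z}^{(k)})$; since $L$ lies in the region between $g(S_{0,g})$ and $g(S_{1,g})$ and $b_k\to 0$, a small enough $R_k$ about $\bm x_k$ also lies in $Q\setminus(g(S_{0,g})\cup g(S_{1,g}))$. Finally, the strips $\mathbb{B}r^{\rm u}(n_k;\ubar{z}^{(k)})$ refine the pairwise disjoint bridges $B^{\rm u}_k$ produced in Step~1 of the proof of Lemma~\ref{lem2}, so the $R_k$ are pairwise disjoint; this proves (2).

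I expect the main obstacle to be closing the recursion in the last paragraph: the two directions are coupled through the quadratic fold, and the side--lengths must shrink while the images still nest. This is precisely where the slow (quadratic) growth $\widehat m_k=k^2$ --- forcing $n_{k+1}/n_k\to 1$ --- and the open condition \eqref{079} are used; the rest is bounded distortion and the local normal form \eqref{f2'}. A secondary point to verify is that $g^{n_k}(R_k)$ stays inside the chart $U$ on which $g^2$ has the folded form, which holds because $\mathrm{diam}\, g^{n_k}(R_k)\asymp\overline{\sigma}^{n_k}a_k\to 0$.
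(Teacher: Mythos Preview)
Your overall architecture---apply Lemma~\ref{lem5} with $\widehat m_k=k^2$ to get $g$ and the critical chain, then analyse $g^{n_k+2}=g^2\circ g^{n_k}$ as (hyperbolic excursion)$\circ$(quadratic fold) and read off a recursion for the side--lengths---is exactly the paper's. The geometric bounds you write, image--width $\lesssim \overline\sigma^{2n_k}a_k^2+\overline\lambda^{\,n_k}b_k$ and image--height $\lesssim \overline\sigma^{\,n_k}a_k$, match the paper's computations for $\pi^\ast_{\mathcal F^{\rm s}}(g^{n_k+2}(\ell^{\rm u}_k))$, $\pi^\ast_{\mathcal F^{\rm u}}(g^{n_k+2}(\ell^{\rm u}_k))$ and $g^{n_k+2}(\ell^{\rm s}_k)$.

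Where you diverge is in how the recursion is closed. The paper does \emph{not} run a forward recursion: it writes down the width explicitly,
\[
b_k=\rho\tilde\beta^{-1}\overline\sigma^{-\sum_{i\ge0} n_{k+i}/2^i},\qquad \text{height}\asymp\sqrt{b_k},
\]
which is engineered so that the \emph{quadratic} fold term gives exactly $b_{k+1}$ (indeed $b_{k+1}=\tilde\beta\rho^{-1}(\overline\sigma^{\,n_k}b_k)^2$), and then uses \eqref{079} to show the \emph{linear} contribution $\overline\lambda^{\,n_k}\sqrt{b_k}$ is negligible compared to $b_{k+1}$. You work in the opposite regime, asserting that the quadratic term $\overline\sigma^{2n_k}a_k^2$ is eventually dominated by the linear one $\overline\lambda^{\,n_k}b_k$.

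That assertion is the one place where your argument has a genuine gap. Which of the two terms dominates is not decided by ``$\lambda\sigma<1$, $n_{k+1}<(1+\eta)n_k$, $n_k\uparrow\infty$'' alone---it depends on the ratio $a_1^2/b_1$ at the start. If one merely takes ``a small $R_1$'' with $a_1\sim b_1$, the recursion is attracted to the paper's regime $a_k\asymp b_k^2$, where the quadratic term dominates for all $k$; your simplified two--term recursion $a_{k+1}\asymp\overline\lambda^{\,n_k}b_k$ then never holds. There \emph{is} a linear--dominant basin (roughly $a_1^2/b_1\ll(\overline\lambda\,\overline\sigma^{-2})^{n_1}$ together with a compatible upper bound on $b_1^2/a_1$), and once inside it one can check the condition improves every two steps; but you would need to specify such an $(a_1,b_1)$ and verify this inductively. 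The paper's closed--form $b_k$ sidesteps this bootstrapping entirely, and also makes the verification of (2)---that $\bm x_k$ sits proportionally in the interior of $\mathbb{G}a^{\rm u}_k$, not just that $a_k\ll\overline\sigma^{-n_k}$---a direct one--line estimate.
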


\begin{proof}
Let $\widehat{m}_k$ be selected as in \eqref{109}. Fix an arbitrarily small $\varepsilon>0$. By applying Lemma \ref{lem2} and Lemma \ref{lem5} to $f$ sequentially, we obtain $g$ which satisfies, in particular, items (1) and (2) of Lemma \ref{lem5}. By shrinking $\varepsilon$ in advance if necessary, we can require the $C^r$-distance between $g$ and $f$ to be as small as we want. As a result, for every $(x,y)\in U$, we can write
\begin{equation}
g^2(x,y)=(-\tilde{a}_{\rm u}+\tilde{\mu}-\tilde{\beta}x^2-\tilde{\gamma}(y+\tilde{a}_{\rm s}),-\tilde{\alpha}x)+h(x,y),
\end{equation}
where all of the coefficients $\tilde{\alpha}$, $\tilde{\beta}$, $\tilde{\gamma}$, $\tilde{\mu}$, $\tilde{a}_{\rm u}$, $\tilde{a}_{\rm s}$ are  $\varepsilon$-close to $\alpha$, $\beta$, $\gamma$, $\mu$, $a_{\rm u}$, $a_{\rm s}$ respectively, and $h(x,y)$ is the higher order terms containing $o(x^2)$ and $o(y)$.

Let $\rho\in(0,1)$ be a constant which will be fixed later, and define
\begin{equation}\label{083}
b_k:=\rho\tilde{\beta}^{-1}\overline{\sigma}^{-\sum_{i=0}^\infty\frac{n_{k+i}}{2^i}}.
\end{equation} 
It follows immediately from (see \eqref{161})
\[2n_k=\sum_{i=0}^\infty\dfrac{n_k}{2^i}\le\sum_{i=0}^\infty\dfrac{n_{k+i}}{2^i}\le n_k\sum_{i=0}^\infty\left(\dfrac{1+\eta}{2}\right)^i=\dfrac{2n_k}{1-\eta} \] that
\begin{equation}\label{029}
\rho\tilde{\beta}^{-1}\overline{\sigma}^{-\frac{2}{1-\eta}n_k}\le b_k \le \rho\tilde{\beta}^{-1}\overline{\sigma}^{-2n_k}.
\end{equation}
The desired rectangle $R_k$ will be defined by taking leaves of $\mathcal{F}^{\rm s}$ and $\mathcal{F}^{\rm u}$ as its boundary. Let us be more precise. Denote by $\mathcal{F}^{{\rm s}({\rm u})}(\bm x)$ the leaf of $\mathcal{F}^{{\rm s}({\rm u})}$ passing through ${\bm x}$. Take $\bm{x}_k^l, \bm{x}_k^r\in \mathcal{F}^{\rm u}(\bm{x}_k)$ and $\bm{x}_k^t, \bm{x}_k^b\in \mathcal{F}^{\rm s}(\bm{x}_k)$ with 
\begin{align}
|\bm{x}_k^l \bm{x}_k|_{\mathcal{F}^{\rm u}(\bm{x}_k)}&=|\bm{x}_k \bm{x}_k^r|_{\mathcal{F}^{\rm u}(\bm{x}_k)}=b_k/2,\label{030}\\
|\bm{x}_k^t \bm{x}_k|_{\mathcal{F}^{\rm s}(\bm{x}_k)}&=|\bm{x}_k \bm{x}_k^b|_{\mathcal{F}^{\rm s}(\bm{x}_k)}=10\tilde{\alpha}\tilde{\beta}^{-\frac{1}{2}}\sqrt{b_k}.\label{031}
\end{align}
Thus, the four leaves $\mathcal{F}^{\rm u}(\bm{x}_k^t)$, $\mathcal{F}^{\rm u}(\bm{x}_k^b)$, $\mathcal{F}^{\rm s}(\bm{x}_k^l)$ and $\mathcal{F}^{\rm s}(\bm{x}_k^r)$ bound a rectangle $R_k$ whose top, bottom, left and right boundaries are sub-arcs of these leaves centered at 
$\bm{x}_k^t$, $\bm{x}_k^b$, $\bm{x}_k^l$ and $\bm{x}_k^r$, respectively. 
Briefly, we call $b_k$ and $20\tilde{\alpha}\tilde{\beta}^{-\frac{1}{2}}\sqrt{b_k}$ the {\it width} and {\it height} of $R_k$. See Figure \ref{fig9}.
\begin{figure}[hbt]
\centering
\scalebox{0.9}{
\includegraphics[clip]{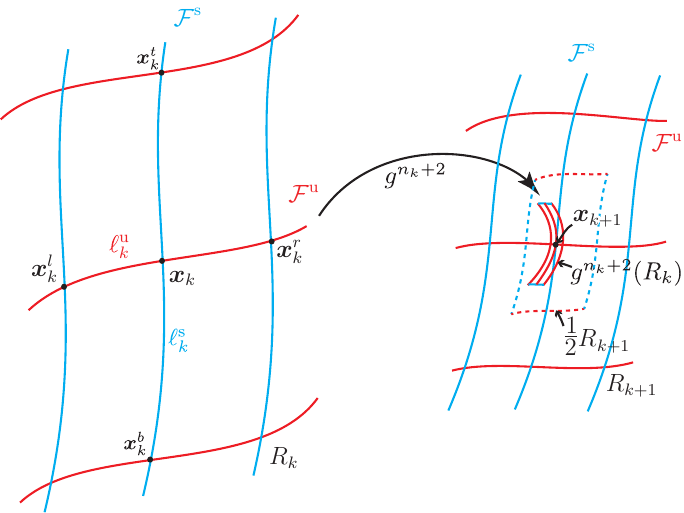}
}
\caption{The rectangles $R_k$ and $R_{k+1}$.} 
\label{fig9}
\end{figure}

Now, it remains to verify that $R_k$ satisfies the conclusions (1)-(3).

For (1), notice that
\[\mbox{diam}(R_k)\lesssim \max\big\{|\bm{x}_k^l \bm{x}_k|_{\mathcal{F}^{\rm u}(\bm{x}_k)},|\bm{x}_k \bm{x}_k^r|_{\mathcal{F}^{\rm u}(\bm{x}_k)},|\bm{x}_k^t \bm{x}_k|_{\mathcal{F}^{\rm s}(\bm{x}_k)},  |\bm{x}_k \bm{x}_k^b|_{\mathcal{F}^{\rm s}(\bm{x}_k)}\big\}. \] Hence, (1) is an immediate consequence of \eqref{030} and \eqref{031} together with the fact that $b_k\to 0$ as $k\to\infty$.

For (2), first, let us note that by \eqref{074} and \eqref{028}, we have
\begin{equation}\label{075}
\bm{x}_k\in G^{\rm u}_k(n_k;\ubar{z}^{(k)})
\end{equation}
for every $k$. Here, we recall that $G^{\rm u}_k(n_k;\ubar{z}^{(k)})$ is the $\rm u$-gap of generation $n_k$ and itinerary $\ubar{z}^{(k)}$ on the tangency curve $L_g$, see the end of Subsection \ref{071}. 
Since gap strips with different itineraries are pairwise disjoint, 
to show the pairwise disjointness of $R_k$, it is sufficient to prove that each $R_k$ is completely contained in the middle component of 
\[\mathbb{G}a_k^{\rm u}(n_k;\ubar{z}^{(k)})\cap (Q \backslash (g(S_{0,g})\cup g(S_{1,g}))),\] 
where $\mathbb{G}a_k^{\rm u}(n_k;\ubar{z}^{(k)})$ is the gap strip associated to $Ga_k^{\rm u}(n_k;\ubar{z}^{(k)})$ and $S_{0,g}, S_{1,g}$ are the continuations of $S_0, S_1$ for $g$ which is defined in Subsection \ref{112}. To see this, notice that both the width of $R_k$ and the width of $\mathbb{G}a_k^{\rm u}(n_k;\ubar{z}^{(k)})$ tend to zero as $k$ goes to infinity, we need to prove the followings hold:
\begin{itemize}
\item[(i)] the width comparison (i.e. the ratio of widths of $R_k$ and $G^{\rm u}_k(n_k;\ubar{z}^{(k)})$) tends to zero as $k\to \infty$, and
\item[(ii)] the center $\bm{x}_k$ of $R_k$ is always located at the relative center position of $G^{\rm u}_k(n_k;\ubar{z}^{(k)})$ for every $k=1,2,\dots$, 
\end{itemize}
which together imply that, the left and right boundaries of $R_k$ do not exceed the boundaries of $\mathbb{G}a_k^{\rm u}(n_k;\ubar{z}^{(k)})$. In other words, the entire rectangle $R_k$ is wholly contained in $\mathbb{G}a_k^{\rm u}(n_k;\ubar{z}^{(k)})$.

Indeed, combining \eqref{075} and \eqref{029}, we have
\[\dfrac{\mbox{width}(R_k)}{|G^{\rm u}_k(n_k;\ubar{z}^{(k)})|}\lesssim \frac{|\bm{x}_k^l \bm{x}_k|_{\mathcal{F}^{\rm u}(\bm{x}_k)}}{\overline{\sigma}^{-n_k}}=\dfrac{1}{2}\rho\tilde{\beta}^{-1}\overline{\sigma}^{-n_k}\to 0\quad (k\to\infty),\] where the inequality comes from \eqref{029} and \eqref{030}.
This gives (i). For (ii), let us note that for $\varepsilon_0$ is defined in Subsection \ref{110}, the minimum distance between points on $\widetilde{L}$ and the vertical strips $S_{0,g}\cup S_{1,g}$ is greater than $(\frac{1}{2}-\sigma^{-1}-\varepsilon_0)$. In particular, the distance between $\bm{q}_k$ and the boundary of the center gap strip of $I^{\rm u}_g$ is bounded from below by this number. Here, we recall that 
$I^{\rm u}_g$ is the continuation of $I^{\rm u}$ for $g$  defined in Subsection \ref{071}. Thus, by the action of the backward iteration $g^{-n_k}$, recalling that $\bm{x}_k$ is the pre-image of $\bm{q}_k\in \widetilde{L}$ under $g^{n_k}$,  the distance of $\bm{x}_k$ and the the boundary of $\mathbb{G}a_k^{\rm u}(n_k;\ubar{z}^{(k)})$ is greater than 
$
\overline{\sigma}^{-n_k}(\frac{1}{2}-\sigma^{-1}-\varepsilon_0).
$
On the other hand, by \eqref{029}, the width $b_k$ of $R_k$ is no more than 
$
\rho\tilde{\beta}^{-1}\overline{\sigma}^{-2n_k}=O(\overline{\sigma}^{-2n_k}).
$
Hence (ii) holds for every sufficiently large $k$. 
 It follows that
\[R_k\subset \mathbb{G}a_k^{\rm u}(n_k;\ubar{z}^{(k)})\cap (Q \backslash (g(S_{0,g})\cup g(S_{1,g})))\] as desired in (2).

For (3), let $\ell ^{\rm u}_k$ be the segment of $\mathcal{F}^{\rm u}(\bm{x}_k)$ that connects $\bm{x}_k^l$ and $\bm{x}_k^r$ and $\ell^{\rm s}_k$ the segment of $\mathcal{F}^{\rm s}(\bm{x}_k)$ that connects $\bm{x}_k^t$ and $\bm{x}_k^b$.
We use 
\[\pi^\ast_{\mathcal{F}^{\rm u}}:R_k\to \ell^{\rm s}_k \quad\text{and}\quad \pi^\ast_{\mathcal{F}^{\rm s}}:R_k\to \ell^{\rm u}_k\]
to denote the projections along the leaves of $\mathcal{F}^{\rm u}$ and $\mathcal{F}^{\rm s}$ to $\ell^{\rm s}_k $ and $\ell^{\rm u}_k $ respectively.
First, Let us show that $g^{n_k+2}(\ell^{\rm u}_k)\subset \frac{1}{2}R_{k+1}$, where $\frac{1}{2}R_k$ is the rectangle defined in the same way as $R_k$ but replacing its width and height by half of those of $R_k$'s. See Figure \ref{fig9}. Indeed, on the one hand, we have
\begin{equation}\label{085}
\begin{split}
\big|\pi^\ast_{\mathcal{F}^{\rm u}}(g^{n_k+2}(\ell^{\rm u}_k))\big|_{\mathcal{F}^{\rm s}(\bm x_{k+1})}
&\lesssim \big|g^{n_k+2}(\ell_k^{\rm u})\big|
\lesssim \tilde{\alpha}\big|g^{n_k}(\ell_k^{\rm u})\big|\le \tilde{\alpha}\overline{\sigma}^{n_k}b_k\\
&=\rho\tilde{\alpha}\tilde{\beta}^{-1}\overline{\sigma}^{-\sum_{i=1}^\infty\frac{n_{k+i}}{2^i}}=\sqrt{\rho}\tilde{\alpha}\tilde{\beta}^{-\frac{1}{2}}\sqrt{b_{k+1}}.
\end{split}
\end{equation}   
Thus, by taking $\rho>0$ sufficiently small in \eqref{083}, we have
\begin{equation}\label{032}
\big|\pi^\ast_{\mathcal{F}^{\rm u}}(g^{n_k+2}(\ell^{\rm u}_k))\big|_{\mathcal{F}^{\rm s}(\bm x_{k+1})}<10\tilde{\alpha}\tilde{\beta}^{-\frac{1}{2}}\sqrt{b_{k+1}}
=\dfrac{1}{2}|\bm{x}_{k+1}^t \bm{x}^b_{k+1}|_{\mathcal{F}^{\rm s}(\bm{x}_{k+1})}.
\end{equation}
On the other hand, by \cite[Theorem 8 in Appendix 1]{PT93}, the curvature of the leaves $\mathcal{F}^{\rm s}(\bm{x})$ and $\mathcal{F}^{\rm u}(\bm{x})$ depend continuously on $\bm{x}$. Since the tangency between $g^{n_k+2}(\ell^{\rm u}_k)$ and $\ell^{\rm s}_{k+1}$ is quadratic, there is a constant $C$ independent of $k$, such that
\begin{equation*}
\begin{split}
\big|\pi^\ast_{\mathcal{F}^{\rm s}}(g^{n_k+2}(\ell^{\rm u}_k))\big|_{\mathcal{F}^{\rm u}(\bm x_{k+1})}
&\le C\big|\pi^\ast_{\mathcal{F}^{\rm u}}(g^{n_k+2}(\ell^{\rm u}_k))\big|_{\mathcal{F}^{\rm s}(\bm x_{k+1})}^2\\
&\lesssim (\rho\tilde{\alpha}\tilde{\beta}^{-1})^2\overline{\sigma}^{-\sum_{i=0}^\infty\frac{n_{k+1+i}}{2^{i}}},
\end{split}
\end{equation*}
where the last line follows from \eqref{085} and \eqref{083}. Notice that by shrinking $\rho$ if necessary,  
\[(\rho\tilde{\alpha}\tilde{\beta}^{-1})^2\overline{\sigma}^{-\sum_{i=0}^\infty\frac{n_{k+1+i}}{2^{i}}}=O(\rho^2)\]
can be made much smaller than 
\[\dfrac{1}{2}b_{k+1}=\dfrac{1}{2}\rho\tilde{\beta}^{-1}\overline{\sigma}^{-\sum_{i=0}^\infty\frac{n_{k+1+i}}{2^{i}}},\]
so that the following inequality 
\begin{equation}\label{033}
\big|\pi^\ast_{\mathcal{F}^{\rm s}}(g^{n_k+2}(\ell^{\rm u}_k))\big|_{\mathcal{F}^{\rm u}(\bm x_{k+1})}<\dfrac{1}{2}b_{k+1}=\dfrac{1}{2}|\bm{x}_{k+1}^l \bm{x}_{k+1}^r|_{\mathcal{F}^{\rm u}(\bm{x}_{k+1})}
\end{equation}
holds. Then \eqref{032} and \eqref{033} together imply that $g^{n_k+2}(\ell^{\rm u}_k)\subset \frac{1}{2}R_{k+1}$. 

Now, let us continue to show that $g^{n_k+2}(R_k)\subset R_{k+1}$. For every $\bm{x}\in \ell^{\rm u}_k$, write $\ell^{\rm s}_k(\bm{x}):=\mathcal{F}^{\rm s}_k(\bm{x})\cap R_k$. Thus, on the one hand, we have 
\begin{equation}\label{077}
\begin{split}
\big|g^{n_k+2}(\ell^{\rm s}_k(\bm{x}))\big|
&\lesssim \tilde{\gamma}\big|g^{n_k}(\ell^{\rm s}_k(\bm{x}))\big|\le \tilde{\gamma}\overline{\lambda}^{n_k}\big|\ell^{\rm s}_k(\bm{x})\big|\\
&\le 20 \tilde{\alpha} \tilde{\beta}^{-\frac{1}{2}} \tilde{\gamma}\overline{\lambda}^{n_k} \sqrt{b_k}
=20\tilde{\alpha}\tilde{\beta}^{-1}\tilde{\gamma}\sqrt{\rho}\overline{\lambda}^{n_k}\overline{\sigma}^{-\sum_{i=0}^\infty\frac{n_{k+i}}{2^{i+1}}}.
\end{split}
\end{equation}
On the other hand, \eqref{030} gives
\begin{equation}\label{078}
|\bm{x}_{k+1}^l \bm{x}_{k+1}|_{\mathcal{F}^{\rm u}(\bm{x}_{k+1})}=\dfrac{1}{2}b_{k+1}=\dfrac{1}{2}\rho\tilde{\beta}^{-1}\overline{\sigma}^{-\sum_{i=0}^\infty\frac{n_{k+1+i}}{2^{i}}}.
\end{equation}
Hence, by recalling \eqref{079}, it follows from \eqref{077} and \eqref{078} that we have the following width comparison: 
\begin{align*}
\dfrac{\big|g^{n_k+2}(\ell^{\rm s}_k(\bm{x}))\big|}{|\bm{x}_{k+1}^l \bm{x}_{k+1}|_{\mathcal{F}^{\rm u}(\bm{x}_{k+1})}}
&\lesssim \dfrac{40\tilde{\alpha}\tilde{\gamma}}{\sqrt{\rho}}\overline{\lambda}^{n_k}\overline{\sigma}^{\sum_{i=0}^\infty\frac{n_{k+1+i}}{2^i}-\sum_{i=0}^\infty\frac{n_{k+i}}{2^{i+1}} }\\
&\le \dfrac{40\tilde{\alpha}\tilde{\gamma}}{\sqrt{\rho}}\big(\overline{\lambda}\overline{\sigma}^{\frac{1+2\eta}{1-\eta}}\big)^{n_k}\to 0\quad (k\to \infty), 
\end{align*}
where the last inequality is obtained by a direct calculation together with \eqref{161}. 
Thus, when $\bm{x}$ travels along $\ell^{\rm u}_k$, we see that $g^{n_k+2}(\ell^{\rm s}_k(\bm{x}))$ can cover every point of $g^{n_k+2}(R_k)$.
Therefore, the width comparison and the fact that $g^{n_k+2}(\ell^{\rm u}_k)\subset \frac{1}{2}R_{k+1}$ together imply that $g^{n_k+2}(R_k)\subset R_{k+1}$ holds for every sufficiently large $k$. Finally, by translating the subscript (i.e. rename $R_k, R_{k+1}, R_{k+2}, \dots $ by $R_1, R_{2}, R_{3}, \dots $) if necessary, we proved (3), which also completes the proof of Lemma \ref{lem6}.
\end{proof}

\section{Proofs of main results}\label{sec8}
In the following proof, 
we identify certain combinatorial conditions, 
from which we derive the statistical conclusion of the first main result.

\begin{proof}[Proof of Theorem \ref{mainthm}]
Let $F_{0}$ be the diffeomorphism with the wild Smale horseshoe $\Lambda_{F_{0}}$ given in Section \ref{064}, 
and let $x$ be any element  of $\Lambda_{F_{0}}$.
Suppose that $\mathbb{U}_0$, $\mathbb{U}_1$ are small open regular neighborhoods of the 
rectangles $S_0$, $S_1$ in $M$ given in Subsection \ref{112} respectively.
For the coding map $h:\Lambda_{F_{0}}\longrightarrow \{0,1\}^{\mathbb{Z}}$ with respect to 
$\{\mathbb{U}_0, \mathbb{U}_1\}$, 
we set 
\begin{equation}\label{eqn_h(x)}
h(x)=\underline{v}=(\dots v_{-2}v_{-1}v_0v_1v_2\dots).
\end{equation}

For any $f\in \mathcal{U}_{0}$, 
we have 
a $C^{r}$ diffeomorphism $g\in \mathcal{U}_{0}$ with the following conditions:
$g$ is arbitrarily $C^r$-close to $f$ and
$g$ has a topological rectangle $R_{k}$ satisfying the conditions (1)--(3) of Lemma \ref{lem6}.
In particular, by Lemma \ref{lem6} (2), 
for any given  integer $k\geq 1$, $R_{k}$ is 
contained in the gap strip $\mathbb{G}a_k^{\rm u}(n_k;\ubar{z}^{(k)})$,   
where generation and itinerary are given by \eqref{020}  and \eqref{108} as 
\[
n_k=\widehat{u}_k+\widehat{m}_k+\widehat{s}_{k+1},\quad
\ubar{z}^{(k)}=\widehat{\ubar{z}}^{(k)}\widehat{\ubar{v}}^{(k)}[\widehat{\ubar{w}}^{(k+1)}]^{-1}.
\]

Here we consider the integer interval $\mathbb{I}_{k}=[\alpha_{k}, \alpha_{k}+\beta_{k}]\cap \mathbb{Z}$ with
\[
\alpha_{k}=\sum_{i=0}^{k-1} (n_{i}+2)+\widehat{u}_{k},\quad 
\beta_{k}=\widehat{m}_{k},
\]
and $n_{i}=\widehat{u}_{i}+\widehat{m}_{i}+\widehat{s}_{i+1}$ is the generation of 
the itinerary $\ubar{z}^{(i)}$ given in \eqref{108}. 
See Figure \ref{fig10}. 
\begin{figure}[hbt]
\centering
\scalebox{0.95}{\includegraphics[clip]{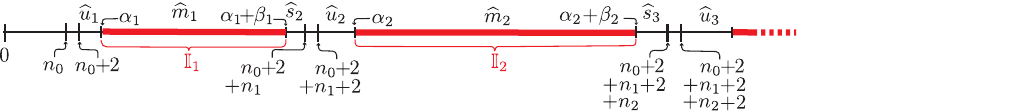}}
\caption{The integer intervals $\mathbb{I}_k$.} 
\label{fig10}
\end{figure}
Then one can take the middle part $\widehat{\ubar{v}}^{(k)}$ has the form 
\begin{equation}\label{eqn_hatvk}
\widehat{\ubar{v}}^{(k)}=(v_{\alpha_k+1}v_{\alpha_k+2}\dots  v_{\alpha_k+\beta_k}).
\end{equation}
For any $q\in \mathbb{N}$, we set $\mathbb{I}_k^{\,(q)}=[\alpha_{k}+q, \alpha_{k}+\beta_{k}-q]\cap \mathbb{Z}$ 
if $2q\leq \beta_k$ and otherwise $\mathbb{I}_k^{\,(q)}=\emptyset$.

For any integer $N\geq \alpha_1+\beta_1+1$, let $k_N$ be the greatest integer with 
$\alpha_{k_N}+\beta_{k_N}\leq N-1$.
It follows from Lemma \ref{lem5} (3) and \eqref{109} 
that, for any $\varepsilon >0$ and $q\in \mathbb{N}$, there exists an integer $N_0=N_0(\varepsilon,q)>0$ 
such that, for any $N\geq N_0$,   
\begin{align*}
\frac{\#\left\{ 0\le n \le N-1 \, :\,  n \in \bigcup _{k=1}^\infty \mathbb{I}_k^{\,(q)}\right\}}{N}
\geq 
\frac{\sum_{k=1}^{k_{N}}(\widehat{m}_{k}-2q)}{ \sum_{k=1}^{k_N+1}(\widehat{u}_{k}+\widehat{m}_{k}+\widehat{s}_{k+1}+2)}\\
=
\frac{\sum_{k=1}^{k_{N}}k^{2}-2qk_{N}}{ \sum_{k=1}^{k_N+1}(k^{2}+O(k))+2(k_N+1)}
=\frac{2k_N^{3}/6+O(k_N^{2})}{2k_N^{3}/6+O(k_N^{2})}>1-\varepsilon.
\end{align*}
This implies that  
\begin{equation}\label{eqn_Nepsilon}
\#\Bigl\{[0,N-1]\cap \mathbb{Z}\setminus \bigcup _{k=1}^\infty \mathbb{I}_k^{\,(q)}\Bigr\}<N\varepsilon
\quad\text{if}\quad N\geq N_0.
\end{equation}
We set $\mathrm{Int}(R_{1})=D$.
By Lemma \ref{lem6} (2) and \eqref{eqn_hatvk}, 
\begin{equation}\label{eqn_gnRk}
g^n(D)\subset g^{n}(R_{1})\subset \mathbb{U}_{v_{n}} 
\end{equation}
if $n\in \bigcup_{k\in \mathbb{N}}(\mathbb{I}_{k}\setminus \{\alpha_k\})$.

Since $g$ is sufficiently $C^{r}$ close to $f$ and hence to $F$ if $\mathcal{U}_{0}$ is sufficiently close to $F$ (see Remark \ref{rmk7.2}),
one can suppose that $\bigcap_{\,i\in \mathbb{Z}}g^i(\mathbb{U}_0\sqcup \mathbb{U}_1)$ 
is equal to the continuation $\Lambda_g$ of $\Lambda$.
Then there exists an integer $N_1>0$ such that, for any integer $k>0$ with $k^2>2N_1$ and 
any $j\in \mathbb{I}_k^{\,(N_1)}$, 
$$\mathrm{diam}\biggl(\,\bigcap_{i\in (\mathbb{I}_{k}\setminus \{\alpha_k\})}g^{j-i}(\mathbb{U}_{v_i})\biggr)\leq 
\mathrm{diam}\biggl(\,\bigcap_{u=-N_1}^{N_1}g^{-u}(\mathbb{U}_{v_{j+u}})\biggr)<\varepsilon.$$
By \eqref{eqn_gnRk}, $g^j(D)\subset \bigcap_{\,i\in (\mathbb{I}_{k}\setminus \{\alpha_k\})}g^{j-i}(\mathbb{U}_{v_i})$.
By \eqref{eqn_h(x)}, the continuation $x_g\in \Lambda_g$ of $x$ 
satisfies $\{x_g\}=\bigcap_{\,i\in \mathbb{Z}}g^{-i}(\mathbb{U}_{v_i})$ 
and hence $g^j(x_g)\in \bigcap_{\,i\in (\mathbb{I}_{k}\setminus \{\alpha_k\})}g^{j-i}(\mathbb{U}_{v_i})$.
Thus we have 
$$\sup_{y\in D}\mathrm{dist}(g^j(y),g^j(x_g))\leq \varepsilon$$ 
for any $j\in \mathbb{I}_k^{\,(N_1)}$.
By this fact together with \eqref{eqn_Nepsilon} for $q=N_1$,  
\begin{align*}
\sum_{j=0}^{N-1}
\sup_{y\in D}\mathrm{dist}(g^j(y),g^j(x_g))&=
\sum_{j\in \bigcup_{k=1}^\infty \mathbb{I}_k^{(N_1)}\cap [0,N-1]}\sup_{y\in D}\mathrm{dist}(g^j(y),g^j(x_g))\\
&\qquad\qquad+
\sum_{j\in [0,N-1]\cap\mathbb{Z}\setminus \bigcup_{k=1}^\infty\mathbb{I}_k^{(N_1)}}
\sup_{y\in D}\mathrm{dist}(g^j(y),g^j(x_g))\\
&< N\varepsilon+N\varepsilon\,\mathrm{diam}(M)=N\varepsilon(1+\mathrm{diam}(M))
\end{align*}
for any sufficiently large $N\in \mathbb{N}$.
Since one can take $\varepsilon$ arbitrarily small, the equation \eqref{defspl} holds.
This ends the proof of Theorem \ref{mainthm}.
\end{proof}

Next, 
Theorem \ref{HD} follows immediately from the next result.
\begin{prop}\label{thm0131}
Suppose that
$\mathcal{U}_0$ is the $C^{r}$-neighborhood  of $F_{0}$ with the wild horseshoe $\Lambda_{F_{0}}$ in Theorem \ref{mainthm}. 
Then, for any $f\in\mathcal{U}_0$, the following conditions hold.
\begin{enumerate}[\rm (1)]
\item \label{DrcHst-1} 
For every Birkhoff regular  $x\in \Lambda_{F_{0}}$ of $F_{0}$, 
there is a diffeomorphism $g\in \mathcal{U}_0$  
which is arbitrarily $C^{r}$-close to $f$ and  has 
a non-trivial physical measure supported on the forward $g$-orbit of the continuation $x_{g}\in \Lambda_{g}$ of $x$.
\item \label{DrcHst-2} 
There is a diffeomorphism $g\in \mathcal{U}_0$   
which is arbitrarily $C^{r}$-close to $f$ and 
 has a non-trivial contracting wandering domain $D$ such that 
the forward orbit of any point in $D$ has historic behavior.
\end{enumerate}
\end{prop}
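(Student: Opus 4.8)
The plan is to leverage the rectangle sequence $\{R_k\}$ and the critical chain constructed in Lemma \ref{lem6}, together with the combinatorial itinerary control already used in the proof of Theorem \ref{mainthm}. For both parts we fix $f\in\mathcal{U}_0$ and apply Lemma \ref{lem6} to obtain $g$ arbitrarily $C^r$-close to $f$, with rectangles $R_k$ satisfying $g^{n_k+2}(R_k)\subset R_{k+1}$, $\mathrm{diam}(R_k)\to 0$, and $R_k\subset\mathbb{G}a_k^{\rm u}(n_k;\ubar z^{(k)})\cap(Q\setminus g(S_{0,g}\cup S_{1,g}))$. The crucial freedom is that the middle itinerary blocks $\widehat{\ubar v}^{(k)}\in\{0,1\}^{\widehat m_k}$ are arbitrary; choosing them differently produces the two different statistical behaviors. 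In both cases, the set $D=\mathrm{Int}(R_1)$ is a non-trivial wandering domain: pairwise disjointness of the $R_k$ gives condition \eqref{dfnWD1}, while $D$ being disjoint from the basin of any weak attractor follows because the orbit of $D$ repeatedly visits the two separated boxes $\mathbb{U}_0,\mathbb{U}_1$ and shadows the horseshoe, exactly as in \cite{KS17, KNS}; the contracting property is immediate from $\mathrm{diam}(R_k)\to 0$.

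For part \eqref{DrcHst-2}, I would choose the blocks $\widehat{\ubar v}^{(k)}$ so that the empirical measures along the orbit of a point in $D$ oscillate between (at least) two distinct invariant measures supported on $\Lambda_g$. Concretely, as in the Colli--Vargas and Kiriki--Soma constructions, one alternates long blocks of $0$'s with long blocks coding an orbit equidistributed (in the Birkhoff sense) with respect to a different measure — say the fixed point $p_g$ versus another periodic orbit — with the block lengths $\widehat m_k=k^2$ growing fast enough that the Cesàro averages of $\delta^n_{x,g}$ do not converge. The same argument that produced \eqref{eqn_Nepsilon} and \eqref{eqn_gnRk} shows that for $j$ in the "good" part of $\mathbb{I}_k$, $g^j(D)$ lies in a small neighborhood determined by the itinerary $v$, so the empirical measure is close to the target measure of the $k$-th block; since these targets alternate, non-convergence follows, and this holds simultaneously for every $x\in D$ because $\mathrm{diam}(g^j(D))\to 0$ along the relevant subsequence. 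Thus $g$ has property $\mathscr H$.

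For part \eqref{DrcHst-1}, given a Birkhoff regular $x\in\Lambda_{F_0}$ with $h(x)=\underline v$, I would instead choose the middle blocks $\widehat{\ubar v}^{(k)}$ to match the corresponding finite windows of $\underline v$ — precisely the choice \eqref{eqn_hatvk} already made in the proof of Theorem \ref{mainthm}. Then the orbit of $D$ shadows the orbit of $x_g$ for a fraction of time tending to $1$, so by the estimate in the proof of Theorem \ref{mainthm} one gets $\frac1n\sum_{i=0}^{n-1}\sup_{y\in D}\mathrm{dist}(g^i(y),g^i(x_g))\to 0$. Since $x$ is Birkhoff regular, $\delta^n_{x_g,g}$ converges to some invariant measure $\mu$ supported on $c\ell(\mathrm{orb}(x_g))\subset\Lambda_g$; combining with the Wasserstein bound (which is dominated by the Cesàro average of the sup-distances as in Remark 2.3(2)/\cite{KNS}) shows $\delta^n_{y,g}\to\mu$ for every $y\in D$. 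Hence $\mu$ is a non-trivial physical measure whose basin contains $D$ (positive Lebesgue measure) and whose support is not an attractor because $D$ is a wandering domain disjoint from any weak-attractor basin; so $g$ satisfies $\mathscr D_x$. Finally, Theorem \ref{HD} is deduced by letting $\mathcal D_q$ (resp. $\mathcal H$) be the set of all such $g$ obtained as $f$ ranges over $\mathcal{U}_0$ — this is dense in $\mathcal{U}_0$ by construction — and observing that $\mathscr D_q$ (resp. $\mathscr H$) is automatically $C^r$-persistent relative to it since each member was built to have the property.

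The main obstacle I anticipate is the verification in part \eqref{DrcHst-1} that the limit measure is genuinely \emph{physical} rather than merely being the weak$^*$ limit of the $\delta^n_{y,g}$ for $y\in D$: one must confirm that $\mathrm{supp}(\mu)$ is not an attractor, which requires knowing that $D$ is not swallowed by the basin of any weak attractor, and this in turn rests on the non-triviality argument for the wandering domain (the orbit keeps re-entering both boxes $\mathbb U_0,\mathbb U_1$). A secondary technical point is making the block lengths in part \eqref{DrcHst-2} explicit enough that the oscillation of empirical measures is quantitatively controlled while still respecting the constraint $\widehat m_k=k^2$ (or adjusting that constraint), and checking that the $C^r$-perturbation in Lemma \ref{lem5}/\ref{lem6} still applies with the new itineraries — but since those lemmas are stated for arbitrary $\widehat{\ubar v}^{(k)}$, this should be routine.
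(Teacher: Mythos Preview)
Your proposal is correct and follows the paper's overall strategy: exploit the freedom in the middle itinerary blocks $\widehat{\ubar v}^{(k)}$ of Lemma~\ref{lem5} to produce either convergent or oscillating empirical measures on the wandering domain $D=\mathrm{Int}(R_1)$. Two points of comparison are worth noting.

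For part \eqref{DrcHst-1} you reuse the itinerary choice \eqref{eqn_hatvk} from the proof of Theorem~\ref{mainthm}, so that $g^j(D)$ shadows $g^j(x_g)$ at the \emph{same} time index $j$, and then deduce the physical measure directly from strong pluripotency plus Birkhoff regularity of $x_g$ via the Wasserstein bound. The paper instead sets $\widehat{\ubar v}^{(k)}=(v_0v_1\dots v_{k^2-1})$, i.e.\ each block restarts at the beginning of the code of $x_g$, and defers the averaging computation to \cite[Theorem~5.5]{KNS23}. Both choices work; yours is slightly more economical here since it makes \eqref{DrcHst-1} essentially a corollary of the already-proved Theorem~\ref{mainthm}, whereas the paper's choice requires a separate (though analogous) ergodic-average argument.

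For part \eqref{DrcHst-2} your sketch matches the paper's idea, and the paper supplies precisely the explicit construction you flag as the ``secondary technical point'': an \emph{era} sequence $(k_s)_{s\in\mathbb{N}}$ satisfying $\sum_{k=k_s}^{k_{s+1}-1}k^2>s\sum_{k<k_s}k^2$, so that each new era dominates all previous time, and within each era the block $\widehat{\ubar v}^{(k)}$ is taken to be $\lfloor k^2/3\rfloor$ zeros followed by ones (even eras) or $\lfloor 2k^2/3\rfloor$ zeros followed by ones (odd eras). The empirical frequency of visits to $\mathbb{U}_0$ then oscillates between roughly $1/3$ and $2/3$, forcing historic behavior; the remaining calculation is referred to \cite[Theorem~5.1]{KNS23}. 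This is fully compatible with the fixed choice $\widehat m_k=k^2$, so no adjustment there is needed.
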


\begin{proof}[Proof \rm (including the proof of Theorem  \ref{HD})]
First, we give the proof of  \eqref{DrcHst-1}.
Let $g$ be the diffeomorphism obtained in Lemma \ref{lem6} and
$\Lambda_{g}$  the wild horseshoe for $g$. 
The continuation 
 $x_{g}\in\Lambda_{g}$ is Birkhoff regular . 
 Let
\[h(x_{g})=(\dots v_{-2} v_{-1}v_{0} v_{1} v_{2} \dots) \in \{0,1\}^{\mathbb Z}\]
be the code of $x_{g}$, 
where $h:\Lambda_{g}  \longrightarrow  \{ 0,1\} ^{\mathbb Z}$ is the coding map 
given by $g^{i}(x_{g})\in \mathbb{U}_{v_{i}}$. 
Same as the proof of Theorem \ref{mainthm}, 
we here consider 
the itinerary $\ubar{z}^{(k)}=\widehat{\ubar{z}}^{(k)}\widehat{\ubar{v}}^{(k)}[\widehat{\ubar{w}}^{(k+1)}]^{-1}$ of the 
gap strip $\mathbb{G}a_k^{\rm u}(n_k;\ubar{z}^{(k)})$ containing 
$R_{k}$. 
Since one can choose any element of $\{0,1\}^{k^{2}}$ as the middle part $\widehat{\ubar{v}}^{(k)}$ of $\ubar{z}^{(k)}$, 
we assign the $0$th through $(k^{2}-1)$th entries of the above code of $h(x_{g})$  to $\widehat{\ubar{v}}^{(k)}$ as 
\[
\widehat{\ubar{v}}^{(k)}=(v_{0} v_{1} v_{2}\dots v_{k^{2}-1}).
\]
This implies that  $g$  
 has a non-trivial  physical measure supported on the forward orbit of $x_{g}$. 
 The remaining calculations are similar to those in the proof of \cite[Theorem 5.5]{KNS23}.
 This concludes the proof of \eqref{DrcHst-1}.
 \smallskip
 
 Next,  let us prove  \eqref{DrcHst-2}. To realize historic behavior in 
 the forward orbit starting from the contracting wandering domain $D=\mathrm{Int}(R_{1})$, 
 we prepare a code that oscillates between different dynamics in each generation and does not converge on any of them. 
 The easiest way might be the following.

\begin{itemize}
\item (Era condition)
We first consider 
an increasing sequence of integers 
 $(k_{s})_{s\in \mathbb{N}}$ 
such that, 
for every $s\in \mathbb{N}$,
\begin{equation}\label{era-ratio}
\sum_{k=k_{s}}^{k_{s+1}-1} k^{2}>s \sum_{k=1}^{k_{s}-1} k^{2}.
\end{equation}
\end{itemize}
Note that \eqref{era-ratio} provides the situation 
that the new era from 
$1$ to $k_{s+1}-1$ is so dominant that 
the old era from $1$ to $k_{s}-1$ is neglectable. 
\begin{itemize}
\item (Code condition for oscillation)
Under the condition \eqref{era-ratio}, for each integer $k\ge 1$, 
let 
$\ubar{v}^{(k)}=(v_{0} v_{1} v_{2}\dots v_{k^{2}-1})$
be the code whose 
entries satisfy the following rules: 
 \begin{subequations}
\begin{enumerate}[(1)]
\item 
if $s$ is even and $k_{s}\le k< k_{s+1}$, 
\[
v_{i}=\left\{
\begin{array}{ll}
0 & \text{for}\ i=0,\ldots, \left\lfloor k^{2}/3\right\rfloor-1
\\[3pt]
1 & \text{for}\ i=\left\lfloor k^{2}/3\right\rfloor,\ldots, k^2-1, 
\end{array}\right.
\]
that is, 
\[\widehat{\ubar{v}}^{(k)}=
\underbrace{0\ldots 0}_{\left\lfloor k^{2}/3\right\rfloor}
\underbrace{111\ldots\ldots 1}_{\left\lceil 2k^{2}/3\right\rceil},\]
\item
if $s$ is odd and $k_{s}\le k< k_{s+1}$,  
\[
v_{i}=\left\{
\begin{array}{ll}
0 & \text{for}\ i=0,\ldots, \left\lfloor 2k^{2}/3\right\rfloor-1
\\[3pt]
1 & \text{for}\  i=\left\lfloor 2k^{2}/3\right\rfloor,\ldots, k^{2}-1,
\end{array}\right.
\]
that is, 
\[\widehat{\ubar{v}}^{(k)}=
\underbrace{000\ldots\ldots 0}_{\left\lfloor 2k^{2}/3\right\rfloor}
\underbrace{1\ldots 1}_{\left\lceil k^{2}/3 \right\rceil},\]  
\end{enumerate}
\end{subequations}
where $\left\lfloor\cdot \right\rfloor$ and $\lceil\cdot\rceil$ indicate the 
floor and ceiling functions, respectively. 
\end{itemize}
The above ratio values such as 1/3 or 2/3 are not so essential,  
but the ratios should vary depending on whether the era is even or odd.

Using the above results, one can obtain a wandering domain $D$ with historic behavior.
In fact, consider the rectangle $R_{k}$ which is contained in $\mathbb{G}a_k^{\rm u}(n_k;\ubar{z}^{(k)})$, 
where $\ubar{z}^{(k)}=\widehat{\ubar{z}}^{(k)}\widehat{\ubar{v}}^{(k)}[\widehat{\ubar{w}}^{(k+1)}]^{-1}$
and the middle part $\widehat{\ubar{v}}^{(k)}$ satisfies the above code condition for oscillation.
This implies  that $D:=\mathrm{Int}(R_{1})$ 
is a wandering domain of $g$ whose forward orbit has historic behavior. 
The remaining calculations are the same as the proof of \cite[Theorem 5.1]{KNS23}. 
 This completes the proof of \eqref{DrcHst-2}.
 \end{proof}

\section*{Acknowledgements}
This work was partially supported by 
JSPS KAKENHI Grant Numbers 21K03332, 22K03342, 23K03188, 
Fapesp Grants 2022/07212-2,  2023/14277-6, 
NSFC Numbers 11701199, 12331005, and CSC 202206165004.
Li and Vargas acknowledge the warm hospitality of Tokai University (Japan), and Kiriki thanks  Universidade de S\~ao Paulo (Brazil) for their kindness. Finally, the authors thank the anonymous referees for their careful reading and helpful suggestions.

\bibliographystyle{amsalpha}
\bibliography{ref}

\def\cprime{$'$}
\providecommand{\bysame}{\leavevmode\hbox to3em{\hrulefill}\thinspace}
\providecommand{\MR}{\relax\ifhmode\unskip\space\fi MR }
\providecommand{\MRhref}[2]{%
  \href{http://www.ams.org/mathscinet-getitem?mr=#1}{#2}
}
\providecommand{\href}[2]{#2}
\begin{thebibliography}{dMvS93}

\bibitem[Bar22]{B22}
P.~G. Barrientos, \emph{Historic wandering domains near cycles}, Nonlinearity
  \textbf{35} (2022), no.~6, 3191--3208. \MR{4443932}

\bibitem[BB23]{BB23}
P.~Berger and S.~Biebler, \emph{Emergence of wandering stable components}, J.
  Amer. Math. Soc. \textbf{36} (2023), no.~2, 397--482. \MR{4536902}

\bibitem[BCS22]{BCS22}
J.~Buzzi, S.~Crovisier, and O.~Sarig, \emph{Measures of maximal entropy for
  surface diffeomorphisms}, Ann. of Math. (2) \textbf{195} (2022), no.~2,
  421--508. \MR{4387233}

\bibitem[BD96]{BD96}
Ch. Bonatti and L.~J. D{\'{\i}}az, \emph{Persistent nonhyperbolic transitive
  diffeomorphisms}, Ann. of Math. (2) \textbf{143} (1996), no.~2, 357--396.
  \MR{1381990}

\bibitem[BDV05]{BDV05}
Ch. Bonatti, L.~J. D{\'{\i}}az, and M.~Viana, \emph{Dynamics beyond uniform
  hyperbolicity}, Encyclopaedia of Mathematical Sciences, vol. 102,
  Springer-Verlag, Berlin, 2005, A global geometric and probabilistic
  perspective, Mathematical Physics, III. \MR{2105774}

\bibitem[Bow75]{Bowen}
R.~Bowen, \emph{Equilibrium states and the ergodic theory of {A}nosov
  diffeomorphisms}, Lecture Notes in Mathematics, vol. Vol. 470,
  Springer-Verlag, Berlin-New York, 1975. \MR{442989}

\bibitem[Cro]{C13}
S.~Crovisier, \emph{The {N}ewhouse phenomenon},
  \url{https://www.imo.universite-paris-saclay.fr/~sylvain.crovisier/Newhouse-notes.pdf}.

\bibitem[CV01]{CV01}
E.~Colli and E.~Vargas, \emph{Non-trivial wandering domains and homoclinic
  bifurcations}, Ergodic Theory Dynam. Systems \textbf{21} (2001), no.~6,
  1657--1681. \MR{1869064}

\bibitem[dMvS93]{dMvS}
W.~de~Melo and S.~van Strien, \emph{One-dimensional dynamics}, Ergebnisse der
  Mathematik und ihrer Grenzgebiete (3) [Results in Mathematics and Related
  Areas (3)], vol.~25, Springer-Verlag, Berlin, 1993. \MR{1239171}

\bibitem[KNS]{KNS}
S.~Kiriki, Y.~Nakano, and T.~Soma, \emph{Pluripotency of wandering dynamics},
  \url{https://arxiv.org/abs/2404.00337}.

\bibitem[KNS22]{KNS22}
Shin Kiriki, Yushi Nakano, and Teruhiko Soma, \emph{Emergence via non-existence
  of averages}, Adv. Math. \textbf{400} (2022), 30 pages. \MR{4385138}

\bibitem[KNS23]{KNS23}
S.~Kiriki, Y.~Nakano, and T.~Soma, \emph{Historic and physical wandering
  domains for wild blender-horseshoes}, Nonlinearity \textbf{36} (2023), no.~8,
  4007--4033. \MR{4608772}

\bibitem[KS17]{KS17}
S.~Kiriki and T.~Soma, \emph{Takens' last problem and existence of non-trivial
  wandering domains}, Adv. Math. \textbf{306} (2017), 524--588. \MR{3581310}

\bibitem[LR17]{LR17}
I.~S. Labouriau and A.~A.~P. Rodrigues, \emph{On {T}akens' last problem:
  tangencies and time averages near heteroclinic networks}, Nonlinearity
  \textbf{30} (2017), no.~5, 1876--1910. \MR{3639293}

\bibitem[Mor11]{M11}
C.~G. Moreira, \emph{There are no {$C^1$}-stable intersections of regular
  {C}antor sets}, Acta Math. \textbf{206} (2011), no.~2, 311--323. \MR{2810854}

\bibitem[New70]{N70}
Sheldon~E. Newhouse, \emph{Nondensity of axiom {${\rm A}({\rm a})$} on
  {$S\sp{2}$}}, Global {A}nalysis ({P}roc. {S}ympos. {P}ure {M}ath., {V}ols.
  {XIV}, {XV}, {XVI}, {B}erkeley, {C}alif., 1968), Proc. Sympos. Pure Math.,
  vol. XIV-XVI, Amer. Math. Soc., Providence, RI, 1970, pp.~191--202.
  \MR{277005}

\bibitem[New74]{N74}
\bysame, \emph{Diffeomorphisms with infinitely many sinks}, Topology
  \textbf{13} (1974), 9--18. \MR{339291}

\bibitem[New79]{N79}
S.~E. Newhouse, \emph{The abundance of wild hyperbolic sets and nonsmooth
  stable sets for diffeomorphisms}, Inst. Hautes \'{E}tudes Sci. Publ. Math.
  (1979), no.~50, 101--151. \MR{556584}

\bibitem[PR83]{PR83}
C.~Pugh and C.~Robinson, \emph{The {$C^{1}$} closing lemma, including
  {H}amiltonians}, Ergodic Theory Dynam. Systems \textbf{3} (1983), no.~2,
  261--313. \MR{742228}

\bibitem[PT93]{PT93}
J.~Palis and F.~Takens, \emph{Hyperbolicity and sensitive chaotic dynamics at
  homoclinic bifurcations}, Cambridge Studies in Advanced Mathematics, vol.~35,
  Cambridge University Press, Cambridge, 1993, Fractal dimensions and
  infinitely many attractors. \MR{1237641}

\bibitem[Rue76]{R03}
D.~Ruelle, \emph{A measure associated with axiom-{A} attractors}, Amer. J.
  Math. \textbf{98} (1976), no.~3, 619--654. \MR{415683}

\bibitem[Rue01]{R02}
\bysame, \emph{Historical behaviour in smooth dynamical systems}, Global
  analysis of dynamical systems, Inst. Phys., Bristol, 2001, pp.~63--66.
  \MR{1858471}

\bibitem[Sig74]{S74}
Karl Sigmund, \emph{On dynamical systems with the specification property},
  Trans. Amer. Math. Soc. \textbf{190} (1974), 285--299. \MR{352411}

\bibitem[Sin72]{Sinai72}
Ja.~G. Sina\u{\i}, \emph{Gibbs measures in ergodic theory}, Uspehi Mat. Nauk
  \textbf{27} (1972), no.~4(166), 21--64. \MR{399421}

\bibitem[Tak08]{T08}
F.~Takens, \emph{Orbits with historic behaviour, or non-existence of averages},
  Nonlinearity \textbf{21} (2008), no.~3, T33--T36. \MR{2396607}

\bibitem[Yam12]{Yam12}
S.~Yamanaka, \emph{Induced pluripotent stem cells: Past, present, and future},
  Cell Stem Cell \textbf{10} (2012), no.~6, 678--684.

\end{thebibliography}

\end{document}